\newtheorem{theorem}{Theorem}[section]
\newtheorem{corollary}[theorem]{Corollary}
\newtheorem{proposition}[theorem]{Proposition}
\newtheorem{lemma}[theorem]{Lemma}
\theoremstyle{definition}
\newtheorem{definition}[theorem]{Definition}
\newtheorem{remark}[theorem]{Remark}
\subjclass[2020]{46B20; 46B08; 46B26}
\title{Lipschitz retractions and complementation properties of Banach spaces}
\keywords{Lipschitz retractions, complementation properties of Banach spaces}
\author{Petr H\'ajek}\thanks{This research was supported by CAAS CZ.02.1.01/0.0/0.0/16-019/0000778 and by the project SGS21/056/OHK3/1T/13.}
\address[P. H\'ajek]{Czech Technical University in Prague, Faculty of Electrical Engineering.
Department of Mathematics, Technick\'a 2, 166 27 Praha 6 (Czech Republic)}
\email{hajek@math.cas.cz}
\author{Andr\'es Quilis}\thanks{The second author's research has been supported by PAID-01-19}
\address[A. Quilis]{Universitat Polit\`ecnica de Val\`encia. Instituto Universitario de Matem\'atica Pura y Aplicada, Camino de Vera, s/n
46022 Valencia (Spain); and Czech Technical University in Prague, Faulty of Electrical Engineering. Department of Mathematics, Technick\'a 2, 166 27 Praha 6 (Czech Republic)}
\email{anquisan@posgrado.upv.es}
\begin{document}
\maketitle

\begin{abstract}
In the present paper we introduce and study the Lipschitz retractional structure of metric spaces. This topic was motivated by the analogous projectional structure of Banach spaces, a topic that has been thoroughly investigated. The more general metric setting fits well with the currently active theory of Lipschitz free spaces and spaces of Lipschitz functions. Among our applications we show that the Lipschitz free space $\mathcal{F}(X)$ is a Plichko space whenever $X$ is a Plichko Banach space. Our main results include two examples of metric spaces. The first one $M$  contains two points $\{0,1\}$ such that  no separable subset of $M$ containing these points is a Lipschitz retract of $M$. The second example fails the analogous property for arbitrary infinite  density. Finally, we introduce the metric version of the concept of locally complemented Banach subspace, and prove some metric analogues to the linear theory.
\end{abstract}

\section{Introduction}
In this note we are interested in studying Lipschitz analogues to the Separable Complementation Property (SCP) of Banach spaces and the related concepts.
Most of our results are formulated in terms of various retractional properties of metric spaces, but there is a close relationship with the structural theory of Lipschitz free spaces over Banach (or metric)  spaces, which was our initial motivation. In this direction, our results indicate that the Lipschitz free spaces of general nonseparable Banach spaces have a much richer
complementation structure than their source Banach spaces. 

 Let us start by giving some definitions. A pointed metric space $(M,d,0)$ is a metric space $(M,d)$ with a selected  distinguished point $0$ in $M$, called the base point. We will often write just $M$ to refer to a pointed metric space $(M,d,0)$. For a pointed metric space $M$, we consider the Banach space  $\text{Lip}_0(M)$  formed by all real-valued Lipschitz functions that vanish at the base point, endowed with the norm given by Lipschitz constant
$$\|f\|_\text{Lip}=\sup\bigg\{\frac{|f(p)-f(q)|}{d(p,q)}\colon p,q\in M,~p\neq q\bigg\},$$
for every $f\in \text{Lip}_0(M)$. 
The Banach space $\text{Lip}_0(M)$ is a dual space. Its canonical predual is the Lipschitz free space associated to $M$, denoted by $\mathcal{F}(M)$. It is given by $\mathcal{F}(M)=\overline{\text{span}}\{\delta(p)\colon p\in M\}\subset\text{Lip}_0(M)^{**}$, where $\delta(p)\colon \text{Lip}_0(M)\rightarrow\mathbb{R}$ is the linear functional defined by $\delta(p)(f)=f(p)$ for all $f\in\text{Lip}_0(M)$. An important property of Lipschitz free spaces is their universal linearization of Lipschitz functions. More precisely, for any Lipschitz map $F\colon M\rightarrow N$ between two pointed metric spaces such that $F(0)=0$, there exists a linear and bounded operator $\widehat{F}\colon \mathcal{F}(M)\rightarrow\mathcal{F}(N)$ such that $\|\widehat{F}\|=\|F\|_\text{Lip}$, and $\widehat{F}\circ\delta_M=\delta_N\circ F$. The map $\delta\colon M\rightarrow\mathcal{F}(M)$ is a nonlinear isometric embedding of a metric space into its Lipschitz free space. If $X$ is a Banach space, this isometric embedding has a linear left inverse $\beta\colon \mathcal{F}(X)\rightarrow X$, obtained by extending the barycenter map of finitely supported measures on $X$, to the whole Lipschitz free space. We refer to the important paper by Godefroy and Kalton \cite{GodKal03} for a discussion of these concepts, as well as the monograph by Weaver \cite{Wea18Book}, where metric spaces and Lipschitz functions are studied in great detail. 

Since we intend to study SCP analogues in the context of metric spaces, let us first introduce and summarize the situation in the Banach space setting:

Let $X$ be a Banach space and let $Y$ be a closed linear subspace of $X$. We say that:

\begin{itemize}
    \item[(1)] $Y$ is \emph{linearly complemented in $X$} if there exists a bounded linear map $P\colon X\rightarrow Y$ such that $Py=y$ for all $y\in Y$. We call $P$ a \emph{linear projection from $X$ onto $Y$}.
    \item[(2)] $Y$ is a \emph{Lipschitz retract of $X$} if there exists a Lipschitz map $R\colon X\rightarrow Y$ such that $R(y)=y$ for all $y\in Y$. We call $R$ a \emph{Lipschitz retraction from $X$ onto $Y$}.
    \item[(3)] $Y$ is an \emph{$\mathcal{F}$-Lipschitz retract of $X$} if there exists a Lipschitz map $F\colon X\rightarrow \mathcal{F}(Y)$ such that $F(y)=\delta(y)$ for all $y\in Y$. We call $F$ an \emph{$\mathcal{F}$-Lipschitz retraction from $X$ onto $Y$}. 
    \item[(4)] $Y$ is {locally complemented in $X$} if for every $E$ finite-dimensional subspace of $X$ there exists a linear map $T\colon E\rightarrow Y$ such that $Tf=f$ for all $f\in F\cap Y$.  
\end{itemize}
Observe that $(3)$ is equivalent to $\mathcal{F}(Y)$ being linearly complemented in $\mathcal{F}(X)$ thanks to the universal linearization property of Lipschitz free spaces. Passing to the adjoint map of a linear projection from $\mathcal{F}(X)$ onto $\mathcal{F}(Y)$ we can see that it is also equivalent to the fact that there exists a weak$^*$-weak$^*$ continuous, linear and bounded operator $E\colon \text{Lip}_0(Y)\rightarrow\text{Lip}_0(X)$ that is an extension operator. 

On the other hand, property $(4)$ is equivalent to the existence of a (non necessarily weak$^*$-weak$^*$ continuous) linear and bounded extension operator from $\text{Lip}_0(Y)$ into $\text{Lip}_0(X)$ (we discuss this equivalence and further equivalent formulations of local complementation in section 5). 

With this in mind, it is straightforward to verify the following chain of implications: $(1)$ implies $(2)$, which is equivalent to $(3)$, which implies $(4)$. 

Lindenstrauss and Tzafriri proved in \cite{LinTza71} that if a Banach space $X$ is not a Hilbert space, then it contains a closed linear subspace which does not verify the Compact Extension Property (CEP) in $X$. On the other hand, Kalton showed in \cite{Kal84} that a subspace $Y$ of $X$ is locally complemented if and only if $Y$ has the CEP in $X$. Therefore, in every non-Hilbert Banach space we can find a linear subspace that fails the weakest, and thus all, of the projectional properties we have defined. Hence, it is natural to ask
if for given a Banach space $X$ and a closed linear subspace $Y$, there exists an intermediate subspace $Z$ of $X$ that contains $Y$, such that $Z$ has the same density character as $Y$ and $Z$ verifies one of the aforementioned projectional properties. In the linear projection case, these kinds of properties of the space $X$ are known as Complementation Properties.

More precisely and more generally, given $\alpha,\beta$ two cardinal numbers, we say that a Banach space $X$ has the \emph{($\alpha,\beta$) Complementation Property, CP$(\alpha,\beta)$} for short, if for every closed linear subspace $Y\subset X$ with $\text{dens}(Y)=\alpha$ there exists another subspace $Z$ that contains $Y$, such that $\text{dens}(Z)=\beta$ and $Z$ is linearly complemented in $X$. We say that $X$ has the \emph{Separable Complementation Property (SCP)} if it has the CP$(\aleph_0,\aleph_0)$. Analogously we define the following concepts:

\begin{itemize}
    \item[-] The ($\alpha,\beta$) Lipschitz Retraction Property (Lipschitz RP$(\alpha,\beta)$) and Separable Lipschitz Retraction Property (Lipschitz SRP).
    \item[-] The ($\alpha,\beta$) $\mathcal{F}$-Lipschitz Retraction Property ($\mathcal{F}$-Lipschitz RP$(\alpha,\beta)$) and Separable $\mathcal{F}$-Lipschitz Retraction Property ($\mathcal{F}$-Lipschitz SRP).\footnote{Although the Lipschitz RP$(\alpha,\beta)$ is equivalent to the $\mathcal{F}$-Lipschitz RP$(\alpha,\beta)$ in Banach spaces, it is not equivalent in the more general metric space setting which we will study in this paper, so we choose to define both concepts separately.}
    \item[-] The ($\alpha,\beta$) Local Complementation Property (Local CP$(\alpha,\beta)$) and Local Separable Complementation Property (Local SCP).
\end{itemize}

The CP($\alpha,\beta$), and especially the SCP, has been studied deeply, and there are both positive and negative results. For instance, every Banach space with the Plichko property has the SCP, while there are examples of Banach spaces with the Radon-Nikodym Property failing the SCP (we refer to the survey by Plichko and Yost \cite{PliYos00} for a detailed exposition of these properties). A remarkable result of Koszmider, Shelah and \'{S}wi\c{e}tek \cite{KosSheSwi18} implies that, under the Generalized Continuum Hypothesis, for every pair of cardinal numbers $\alpha\leq\beta$, there exists a connected compact space $K$ such that $C(K)$ fails the CP$(\alpha,\beta)$. 

The Local CP$(\alpha,\beta)$ situation is much simpler: It is a result of Heinrich and Mankiewicz (Proposition 3.4 in \cite{HeiMan82}) that every Banach space $X$ has the Local CP$(\alpha,\alpha)$ for any cardinal $\alpha$ smaller than the density character of $X$. 

The Lipschitz RP($\alpha,\beta$) introduced above is closely related to the other two properties. First of all, if a Banach space has the Lipschitz RP($\alpha,\beta$), then its Lipschitz free space has the CP($\alpha,\beta$). And secondly, a closed linear subspace $Y$ which is locally complemented in a Banach space $X$ is a Lipschitz retract if and only if it $Y$ is a Lipschitz retract of its bidual in its natural embedding; and it is a famous open question posed by Benyamini and Lindenstrauss if every separable Banach space is a Lipschitz retract of its bidual (see \cite{BenLin00}). Therefore, since every Banach space has the Local SCP, if this conjecture is true, then every Banach space has the Lipschitz SRP.   

We are interested in studying these properties in the more general setting of metric spaces. However, since metric spaces in general lack a linear structure, we cannot consider linear projections anymore. The three properties we study are the following:
\begin{definition}
Let $M$ be a metric space, and let $A$ be a closed subset of $M$. We say that:
\begin{itemize}
    \item[(A)] $A$ is a \emph{Lipschitz retract of $M$} if there exists a Lipschitz map $R\colon M\rightarrow A$ such that $R(a)=a$ for all $a\in A$. We call $R$ a \emph{Lipschitz retraction from $M$ onto $A$}.
    \item[(B)] $A$ is an \emph{$\mathcal{F}$-Lipschitz retract of $M$} if there exists a Lipschitz map $F\colon M\rightarrow \mathcal{F}(A)$ such that $F(a)=\delta(a)$ for all $a\in A$. We call $F$ an \emph{$\mathcal{F}$-Lipschitz retraction from $M$ onto $A$}. 
    \item[(C)] $A$ is {locally complemented in $M$} if there exists a linear and bounded extension operator from $\text{Lip}_0(A)$ into $\text{Lip}_0(M)$. 
\end{itemize}
We define the Lipschitz RP($\alpha,\beta$), $\mathcal{F}$-Lipschitz RP($\alpha,\beta$) and the Local CP($\alpha,\beta$) similarly as in the Banach space setting.
\end{definition}
Although we still have that $(A)$ implies $(B)$ which implies $(C)$, among metric spaces an $\mathcal{F}$-Lipschitz retract need not be a Lipschitz retract because a metric space is not always a Lipschitz retract of its Lipschitz free space (unlike a Banach space, thanks to the barycenter map), so the three properties are distinct. 

Sections 2,3 and 4 focus on the Lipschitz RP and the $\mathcal{F}$-Lipschitz RP. In section 2 we study positive results, finding sufficient conditions for a metric space to have the Lipschitz SRP and the $\mathcal{F}$-Lipschitz SRP. Specifically, we focus on the existence of Projectional skeletons in the Lipschitz free space of a metric space, which implies the $\mathcal{F}$-Lipschitz SRP. We study  the metric analogous concept of Lipschitz Retractional skeletons in metric spaces, which implies the Lipschitz SRP. We also give a geometric sufficient condition on a metric space $M$ for $\mathcal{F}(M)$ to be Plichko. 

Sections 3 and 4 are the most technical part of the paper. There, we give counterexamples for the Lipschitz RP. In section 3 we construct a metric space with the $\mathcal{F}$-Lipschitz RP such that any separable subset containing two specific points cannot be a Lipschitz retract, thus failing the Lipschitz SRP; while in section 4 we show the existence of a metric space failing the Lipschitz RP($\alpha,\alpha$) for any infinite cardinal $\alpha$. 

Finally, in section 5 we prove that, analogously to the Banach space setting, every metric space has the Local CP($\alpha,\alpha$) for any infinite cardinal $\alpha$. We do so by adapting an argument from Sims and Yost (\cite{SimYos89}) to metric spaces.

\section{Projectional skeletons in Lipschitz free spaces and Lipschitz retractional skeletons}

\subsection{Projectional skeletons in Lipschitz free spaces and the $\mathcal{F}$-Lipschitz SRP}
We start by recalling the concept of Projectional skeleton. Recall that a partially ordered set $\Gamma$ is \emph{directed} if for every $s_0,s_1\in\Gamma$ there is a $t\in\Gamma$ such that $s_0,s_1\leq t$. 
\begin{definition}[Kubi\'s \cite{Kub09}]
\label{skeletons}
Let $X$ be a Banach space. A \emph{projectional skeleton} on $X$ is a family $\{P_s\}_{s\in\Gamma}$ of bounded linear projections on $X$ indexed by a directed partially ordered set $\Gamma$, such that the following conditions hold:
\begin{itemize}
    \item[(i)] $P_sX$ is separable for all $s\in\Gamma$.
    \item[(ii)] $P_sP_t=P_tP_s=P_s$ whenever $s,t\in \Gamma$ and $s\leq t$.
    \item[(iii)] If $(s_n)_n$ is an increasing sequence of indices in $\Gamma$, then $s=\sup_{n\in\mathbb{N}}s_n$ exists and $P_sX=\overline{\bigcup_{n\in\mathbb{N}}P_{s_n}X}$.
    \item[(iv)] $X=\bigcup_{s\in\Gamma}P_sX$.
\end{itemize}

If $r\geq 1$, we say that an \emph{$r$-projectional skeleton} is a projectional skeleton where every projection has norm less or equal than $r$. 
We say that a projectional skeleton is commutative if $P_sP_t=P_tP_s$ for all $s,t\in\Gamma$, regardless whether they are comparable or not.
\end{definition}

It follows from the directedness of $\Gamma$ and properties $(iii)$ and $(iv)$ that if a Banach space $X$ admits a projectional skeleton, then it has the SCP.

Let us remark a few things about this definition. Firstly, it is straightforward to prove that condition $(ii)$ is equivalent to the fact that $P_s(X)\subset P_t(X)$ and $P_s^*(X^*)\subset P_t^*(X^*)$. 

Also, as proven by Kubi\'{s} in \cite{Kub09}, a projectional skeleton is always uniformly bounded in norm, so it is an $r$-projectional skeleton for some $r\geq 1$. This implies that condition $(iii)$ is equivalent to the fact that if $(s_n)_n$ is an increasing sequence of indices in $\Gamma$ and $s=\sup_{n\in\mathbb{N}}s_n$, then $\lim_{n\in\mathbb{N}}P_{s_n}(x)=P_s(x)$ in the norm topology.

We are interested in the case when $M$ is a complete metric space such that $\mathcal{F}(M)$ admits a projectional skeleton $\{P_s\}_{s\in\Gamma}$. As we said, this implies that $\mathcal{F}(M)$ has the SCP. In fact, thanks to the next result, it implies a slightly stronger version of the SCP for Lipschitz free spaces. Recall that $\Gamma'\subset\Gamma$ is $\sigma$-closed if for every increasing sequence $(s_n)_n\subset \Gamma$, the index $s=\sup_n (s_n)$ belongs to $\Gamma'$; and $\Gamma'$ is cofinal if for every $s\in \Gamma$ there exists $t\in \Gamma'$ such that $s\leq t$. If $\Gamma'\subset \Gamma$ is a $\sigma$-closed cofinal subset of $\Gamma$, then $\{P_s\}_{s\in\Gamma'}$ is also a projectional skeleton. For a detailed exposition and proof of these facts we refer to \cite{Kal20}.  

We use the concept of \emph{support} of an element of the Lipschitz free space defined by Aliaga, Perneck\'a, Petitjean and Proch\'azka in \cite{AliPerPetPro20}. Given an element $\mu$ of $\mathcal{F}(M)$, the \emph{support of $\mu$}, written $\text{supp}(\mu)$, is the intersection of all subsets $K$ in $M$ such that $\mu\in \mathcal{F}(K)$. The set $\text{supp}(\mu)$ is a closed separable subset of $M$ such that $\mu\in\mathcal{F}\big(\text{supp}(\mu)\big)$. 

\begin{proposition}
\label{reductiontofreeskeleton}
Let $M$ be a complete metric space (resp. Banach space), and let $\{P_s\}_{s\in\Gamma}$ be a projectional skeleton on $\mathcal{F}(M)$. Then there exists a $\sigma$-closed cofinal subset of $\Gamma$ and a family $\{A_s\}_{s\in\Gamma'}$ of separable subsets (resp. separable linear subspaces) of $M$  such that $P_s(\mathcal{F}(M))=\mathcal{F}(A_s)$. 

In particular, $\mathcal{F}(M)$ admits a projectional skeleton $\{P_s\}_{s\in\Gamma}$ such that $P_s(\mathcal{F}(M))$ is $\mathcal{F}(A_s)$ where $A_s$ is a separable subset (resp. separable linear subspace) of $M$ for all $s\in\Gamma$. 
\end{proposition}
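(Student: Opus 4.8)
The plan is to take $A_s$ to be, roughly, the closure of the union of the supports of the elements in the separable range $P_s(\mathcal{F}(M))$, and then to pass to the cofinal $\sigma$-closed subset of indices for which this set already ``closes off'' the range. Throughout I write $V_s:=P_s(\mathcal{F}(M))$, and I rely on two facts about supports from \cite{AliPerPetPro20}: each $\text{supp}(\mu)$ is closed and separable with $\mu\in\mathcal{F}(\text{supp}(\mu))$, and support is lower semicontinuous, meaning that if $\mu_k\to\mu$ in $\mathcal{F}(M)$ then $\text{supp}(\mu)\subseteq\overline{\bigcup_k\text{supp}(\mu_k)}$ (this last follows because $\mathcal{F}\big(\overline{\bigcup_k\text{supp}(\mu_k)}\big)$ is a closed subspace containing every $\mu_k$, hence contains $\mu$).

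The first key step is an absorption lemma: for any countable set $\{\mu_k\}\subseteq\mathcal{F}(M)$ there is $t\in\Gamma$ with $\mu_k\in V_t$ for every $k$. Indeed, by condition $(iv)$ (density of $\bigcup_s V_s$) together with uniform boundedness of the skeleton, for each $k$ we may choose indices $s(k,n)$ with $\|\mu_k-P_{s(k,n)}\mu_k\|\to 0$ as $n\to\infty$. Enumerating all the $s(k,n)$ and using directedness, I build an increasing sequence whose supremum $t$ exists by $(iii)$ and dominates every $s(k,n)$. Since $s(k,n)\leq t$ gives $V_{s(k,n)}\subseteq V_t$ by $(ii)$, each $P_{s(k,n)}\mu_k\in V_t$, and letting $n\to\infty$ the closedness of $V_t$ yields $\mu_k\in V_t$.

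The second step is the closing-off construction giving cofinality. For a given $s_0$, I iterate: at stage $n$ the range $V_{s_n}$ is separable, so taking a countable dense set $\{\mu_j\}\subseteq V_{s_n}$ and setting $A_{s_n}:=\overline{\bigcup_j\text{supp}(\mu_j)}$ (resp. $\overline{\text{span}}\bigcup_j\text{supp}(\mu_j)$ in the Banach case) produces a separable set (resp. subspace), which by lower semicontinuity contains $\text{supp}(\mu)$ for every $\mu\in V_{s_n}$. Choosing a countable dense subset $\{a_k\}$ of $A_{s_n}$ and applying the absorption lemma to $\{\delta(a_k)\}$, I find $s_{n+1}\geq s_n$ with $\delta(a_k)\in V_{s_{n+1}}$ for all $k$; by continuity of $\delta$ and closedness of the range this gives $\delta(A_{s_n})\subseteq V_{s_{n+1}}$. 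Setting $s^*=\sup_n s_n$, one checks $A_{s^*}=\overline{\bigcup_n A_{s_n}}$ (again via lower semicontinuity applied to the elements of $V_{s^*}=\overline{\bigcup_n V_{s_n}}$), whence continuity of $\delta$ forces $\delta(A_{s^*})\subseteq V_{s^*}$.

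Let $\Gamma'$ be the set of indices $s$ for which $\delta(A_s)\subseteq V_s$. The previous paragraph shows $\Gamma'$ is cofinal, and essentially the same semicontinuity argument shows it is $\sigma$-closed. For $s\in\Gamma'$ the two inclusions $V_s\subseteq\mathcal{F}(A_s)$ (because each element of $V_s$ is supported in $A_s$) and $\mathcal{F}(A_s)=\overline{\text{span}}\,\delta(A_s)\subseteq V_s$ (from $\delta(A_s)\subseteq V_s$ and closedness) combine to give $P_s(\mathcal{F}(M))=\mathcal{F}(A_s)$ with $A_s$ separable (resp. a separable subspace). Since a projectional skeleton restricted to a $\sigma$-closed cofinal subset is again a projectional skeleton, as recalled above, this yields both conclusions. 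I expect the main obstacle to be the interaction between the closing-off and the support semicontinuity---in particular, verifying that the set built at the supremum index $s^*$ is exactly the closed (span of the) union of the sets built along the sequence, and, in the Banach case, ensuring that the whole generated subspace, and not merely the support points, is absorbed, which is why the dense subset $\{a_k\}$ of $A_{s_n}$ rather than of the supports is fed into the absorption lemma.
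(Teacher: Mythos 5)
Your proposal is correct and follows essentially the same route as the paper's own proof: defining $A_s$ as the closed (span of the) union of supports of elements of $P_s(\mathcal{F}(M))$, interleaving via the absorption property of skeletons to get $P_{s_n}(\mathcal{F}(M))\subseteq\mathcal{F}(A_{s_n})\subseteq P_{s_{n+1}}(\mathcal{F}(M))$, and closing off at the supremum of the increasing sequence to obtain a $\sigma$-closed cofinal set of indices. The only differences are matters of bookkeeping: you prove explicitly the absorption lemma and the support lower-semicontinuity facts that the paper invokes implicitly (``by the properties of projectional skeletons''), and you define $\Gamma'$ as all indices with $\delta(A_s)\subseteq P_s(\mathcal{F}(M))$ rather than as the set of constructed suprema, which makes $\sigma$-closedness slightly cleaner to check.
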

\begin{proof}
Let $s=s_0\in\Gamma$ be an arbitrary index. The set $P_{s_0}(\mathcal{F}(M))$ is a separable subset of $\mathcal{F}(M)$. Since the support of a point in $\mathcal{F}(M)$ is a closed separable subset of $M$, we obtain that the set 
$$A_{s_0}=\overline{\bigcup_{\mu\in P_{s_0}(\mathcal{F}(M))}\text{supp}(\mu)} $$
is a closed separable subset of $M$ as well. Moreover, since $\text{supp}(\mu)\subset A_{s_0}$ for all $\mu\in P_{s_0}(\mathcal{F}(M))$, we have that $P_{s_0}(\mathcal{F}(M))\subset \mathcal{F}(A_{s_0})$, which is a separable subset of $\mathcal{F}(M)$. By the properties of projectional skeletons, we can find $s_1\in \Gamma$, with $s_0\leq s_1$ such that $\mathcal{F}(A_{s_0})\subset P_{s_1}(\mathcal{F}(M))$.

By induction, we construct $(s_n)_n\subset \Gamma$ such that $s_n\leq s_{n+1}$ and $\{A_{s_n}\}_n$ are closed separable subsets of $M$ verifying
$$ P_{s_n}\big(\mathcal{F}(M)\big)\subset \mathcal{F}(A_{s_n})\subset P_{s_{n+1}}\big(\mathcal{F}(M)\big). $$

This implies that $\bigcup_{n\in\mathbb{N}}P_{s_n}\big(\mathcal{F}(M)\big)=\bigcup_{n\in\mathbb{N}}\mathcal{F}(A_n)$. Consider now $t_s=\text{sup}(s_n)\in \Gamma$. We have then that 
$$P_{t_s}\big(\mathcal{F}(M)\big)=\overline{\bigcup_{n\in\mathbb{N}}P_{s_n}\big(\mathcal{F}(M)\big)}=\overline{\bigcup_{n\in\mathbb{N}}\mathcal{F}(A_n)}=\mathcal{F}\bigg(\overline{\bigcup_{n\in\mathbb{N}}A_n}\bigg). $$ 

Set $A_{t_s}=\overline{\bigcup_{n\in\mathbb{N}}A_n}$ and $\Gamma'=\{t_s\}_{s\in\Gamma}$. Clearly $\Gamma'$ is $\sigma$-complete and cofinal, and the result follows.

It is easy to modify this argument to see that if $X$ is a Banach space and $\mathcal{F}(X)$ admits a projectional skeleton $\{P_s\}_{s\in \Gamma}$, we can assume that $P_s\big(\mathcal{F}(X)\big)=\mathcal{F}(Y_s)$ where $\{Y_s\}$ is a family of separable linear subspaces.
\end{proof}

Therefore, applying again the directedness of $\Gamma$ and properties $(iii)$ and $(iv)$ of projectional skeletons, we obtain the following corollary:

\begin{corollary}
Let $M$ be a complete metric space. If $\mathcal{F}(M)$ admits a projectional skeleton then $M$ has the $\mathcal{F}$-Lipschitz SRP.
\end{corollary}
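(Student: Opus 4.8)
The plan is to combine the reduction furnished by Proposition~\ref{reductiontofreeskeleton} with the standard fact that any separable subset of a Banach space carrying a projectional skeleton is absorbed by a single projection of the skeleton. Throughout I would work with the skeleton $\{P_s\}_{s\in\Gamma}$ given by the ``in particular'' part of Proposition~\ref{reductiontofreeskeleton}, so that $P_s(\mathcal{F}(M))=\mathcal{F}(A_s)$ with each $A_s$ a closed separable subset of $M$ (which I may assume contains the base point). Fixing an arbitrary closed separable subset $Y\subseteq M$ with $0\in Y$, the goal is to produce a separable $Z\supseteq Y$ that is an $\mathcal{F}$-Lipschitz retract of $M$, and the natural candidate is $Z=A_s$ for a suitably chosen index $s$.

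First I would carry out the capturing step: for the separable subspace $\mathcal{F}(Y)\subseteq\mathcal{F}(M)$ there exists $s\in\Gamma$ with $\mathcal{F}(Y)\subseteq P_s(\mathcal{F}(M))$. To this end I fix a countable dense set $\{x_n\}$ in $\mathcal{F}(Y)$ and, using property $(iv)$, choose for each $n,k$ an index $t_{n,k}$ with $\|P_{t_{n,k}}x_n-x_n\|<1/k$; directedness of $\Gamma$ lets me build an increasing sequence dominating all the $t_{n,k}$, whose supremum $s$ exists by $(iii)$. Since the skeleton is uniformly bounded by some $r\geq 1$ (as recalled after Definition~\ref{skeletons}), property $(ii)$ gives $P_sP_{t_{n,k}}=P_{t_{n,k}}$, whence $\|x_n-P_sx_n\|\leq\|x_n-P_{t_{n,k}}x_n\|+\|P_s(P_{t_{n,k}}x_n-x_n)\|<(1+r)/k$ for every $k$; letting $k\to\infty$ yields $P_sx_n=x_n$, and by density and continuity of $P_s$ this gives $\mathcal{F}(Y)\subseteq P_s(\mathcal{F}(M))=\mathcal{F}(A_s)$.

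Next I would deduce the set-level inclusion $Y\subseteq A_s$. For each $y\in Y$ with $y\neq 0$ we have $\delta(y)\in\mathcal{F}(A_s)$, so by the defining property of the support recalled before Proposition~\ref{reductiontofreeskeleton}, $\{y\}=\text{supp}(\delta(y))\subseteq A_s$; together with $0\in A_s$ this gives $Y\subseteq A_s$. Setting $Z=A_s$, which is separable and contains $Y$, it remains to exhibit the retraction, and here I would simply take $F=P_s\circ\delta_M\colon M\to\mathcal{F}(Z)$. This map is Lipschitz as a composition of the isometric embedding $\delta_M$ with the bounded linear projection $P_s$, and for $a\in Z$ one has $\delta(a)\in\mathcal{F}(A_s)=P_s(\mathcal{F}(M))$, so $F(a)=P_s(\delta(a))=\delta(a)$. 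Thus $F$ is an $\mathcal{F}$-Lipschitz retraction from $M$ onto $Z$, and since $Y$ was an arbitrary closed separable subset this establishes the $\mathcal{F}$-Lipschitz SRP.

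The only genuinely non-formal ingredient is the capturing step, and even there most of the work has been done upstream: the uniform boundedness of the skeleton and the reformulation of $(iii)$ as norm convergence $P_{s_n}x\to P_sx$ reduce the argument to the routine estimate above. The passage from $\mathcal{F}(Y)\subseteq\mathcal{F}(A_s)$ to $Y\subseteq A_s$ uses only the elementary behaviour of supports, so I do not expect a real obstacle; the main points requiring care are the book-keeping of the base point and making sure to invoke the refined skeleton (with $P_s(\mathcal{F}(M))=\mathcal{F}(A_s)$) provided by Proposition~\ref{reductiontofreeskeleton} rather than an arbitrary one.
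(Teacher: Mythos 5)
Your proof is correct and follows essentially the same route as the paper: both invoke Proposition~\ref{reductiontofreeskeleton} to get a skeleton with $P_s(\mathcal{F}(M))=\mathcal{F}(A_s)$, capture $\mathcal{F}(Y)$ inside a single $\mathcal{F}(A_{s_0})$ using directedness together with properties $(iii)$ and $(iv)$, deduce $Y\subseteq A_{s_0}$ from the behaviour of supports, and obtain the retraction as $P_{s_0}\circ\delta_M$. The only cosmetic difference is in the capturing step, where the paper uses the exact membership $x_n\in P_{s_n}(\mathcal{F}(M))$ given by $(iv)$ and passes to an increasing sequence, while you run a slightly more roundabout (but valid) approximation argument relying on the uniform bound $r$.
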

\begin{proof}
Let $\{P_s\}_{s\in\Gamma}$ be a projectional skeleton such that $P_s\big(\mathcal{F}(M)\big)$ is $\mathcal{F}(A_s)$ where $A_s$ is a separable subset of $M$ for all $s\in\Gamma$, and let $N\subset M$ be a separable subset of $M$. Since $\mathcal{F}(N)$ is separable, there exists a sequence $(s_n)_n$ in $\Gamma$ such that $\mathcal{F}(N)\subset \overline{\bigcup_{n\in\mathbb{N}}\mathcal{F}(A_{s_n})}$. By the directedness of $\Gamma$, we may assume that $(s_n)_n$ is increasing. Hence, the supremum $s_0=\sup_{n\in\mathbb{N}}s_n$ exists and $\mathcal{F}(A_{s_0})=\overline{\bigcup_{n\in\mathbb{N}}\mathcal{F}(A_{s_n})}$. Since $\mathcal{F}(N)\subset\mathcal{F}(A_{s_0})$, we have that $N\subset A_{s_0}$. Finally, since $\mathcal{F}(A_{s_0})$ is complemented in $\mathcal{F}(M)$, we obtain that $A_{s_0}$ is an $\mathcal{F}$-Lipschitz retraction of $M$, which finishes the proof.
\end{proof}

Notice that if $\mathcal{F}(M)$ only has the usual SCP, given a separable subset $A$ of $M$, we have that there exists a separable subspace $Z$ of $\mathcal{F}(M)$ such that $Z$ is linearly complemented in $\mathcal{F}(M)$. However, $Z$ need not be the Lipschitz free space of a separable subset of $M$, so we cannot deduce that $M$ has the $\mathcal{F}$-Lipschitz SRP directly in this way just from the assumption that $\mathcal{F}(M)$ has the SCP. 

A Banach space $X$ is \emph{$\lambda$-Plichko} if there exists a $\lambda$-norming subspace $N\subset X^*$ and a linearly dense subset $\Delta\subset X$ such that for every $f\in N$, the set $\{x\in \Delta\colon \langle f,x\rangle\neq 0\}$ is countable. It is a result of Kubi\'{s} (\cite{Kub09}) that a Banach space is $\lambda$-Plichko if and only if it admits a commutative $\lambda$-projectional skeleton. We are going to study a geometric condition on metric spaces that implies that $\mathcal{F}(M)$ is $1$-Plichko. This result has been obtained by the second author in collaboration with A.J. Guirao and V. Montesinos, and will be included in a future note. We include the full proof here for completeness.

\begin{theorem}
\label{Plichkoinfreespaces}
Let $M$ be a complete metric space and $\lambda\geq 1$. If there exists a point $p\in M$ such that for every $x\in M$ and every $r<\frac{1}{\lambda}$ the ball $B\big(x,r\cdot d(x,p)\big)$ is separable, then $\mathcal{F}(M)$ is $\lambda$-Plichko. In particular, $M$ has the $\mathcal{F}$-Lipschitz SRP.
\end{theorem}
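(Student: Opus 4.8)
The plan is to produce on $\mathcal{F}(M)$ a \emph{commutative} $\lambda$-projectional skeleton; by the Kubi\'s characterization recalled above this gives that $\mathcal{F}(M)$ is $\lambda$-Plichko, and the final assertion then follows from the Corollary, a commutative projectional skeleton being in particular a projectional skeleton. Since $\mathcal{F}(M)$ is independent of the choice of base point, I may assume that the distinguished point $p$ of the hypothesis \emph{is} the base point, so that $d(x,p)$ is the radial distance of $x$ and every subset containing $p$ carries a free subspace $\mathcal{F}(\cdot)\subseteq\mathcal{F}(M)$. The building blocks are the radial retractions: for closed $A\subseteq M$ with $p\in A$ put $R_A(x)=x$ if $x\in A$ and $R_A(x)=p$ otherwise. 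I call $A$ \emph{admissible} if it is closed, separable, contains $p$, and is \emph{radially saturated}, i.e. $B\big(a,\tfrac{1}{\lambda}d(a,p)\big)\subseteq A$ for all $a\in A$. For admissible $A$ the map $R_A$ is a retraction with $\mathrm{Lip}(R_A)\le\lambda$: the only nontrivial pairs are $x\in A$, $y\notin A$, where $y\notin B\big(x,\tfrac1\lambda d(x,p)\big)$ forces $d(x,y)\ge\tfrac1\lambda d(x,p)=\tfrac1\lambda d(R_Ax,R_Ay)$. Hence $P_A:=\widehat{R_A}$ is a projection of $\mathcal{F}(M)$ onto $\mathcal{F}(A)$ (idempotent since $R_A$ is a retraction) with $\|P_A\|\le\lambda$; and by functoriality of the linearization together with the pointwise identities $R_A\circ R_B=R_{A\cap B}=R_B\circ R_A$ (valid as $A\cap B$ is again admissible), the family $\{P_A\}$ commutes and satisfies $P_AP_B=P_A$ whenever $A\subseteq B$.

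The heart of the argument is a separability lemma guaranteeing a rich supply of admissible sets. First, each ball $B\big(a,\tfrac1\lambda d(a,p)\big)=\bigcup_n B\big(a,(\tfrac1\lambda-\tfrac1n)d(a,p)\big)$ is separable, being a countable union of separable balls. Second, the one-step saturation $\bigcup_{a\in S}B\big(a,\tfrac1\lambda d(a,p)\big)$ of a separable set $S$ is again separable: if $D\subseteq S$ is countable and dense and $u\in B\big(a,\tfrac1\lambda d(a,p)\big)$ with slack $d(u,a)=(\tfrac1\lambda-\eta)d(a,p)$, then any $a'\in D$ sufficiently close to $a$ (available by density, with closeness governed by $\eta$ and $d(a,p)$) still satisfies $u\in B\big(a',\tfrac1\lambda d(a',p)\big)$; thus the saturation equals $\bigcup_{a'\in D}B\big(a',\tfrac1\lambda d(a',p)\big)$, a countable union of separable balls. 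Iterating countably many times and closing up shows that every separable set is contained in an admissible one. In particular the admissible sets are directed by inclusion (saturate $A\cup B$) and cover $M$ (saturate $\{p,x\}$ for each $x$).

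It then remains to verify the skeleton axioms for $\{P_A\}_{A\in\Gamma}$, with $\Gamma$ the admissible sets ordered by inclusion: separability of $P_A\mathcal{F}(M)=\mathcal{F}(A)$ is built in; condition $(ii)$ and directedness are established above; for the $\sigma$-completeness in $(iii)$ one takes $\sup_n A_n=\overline{\bigcup_n A_n}$, which is admissible (the saturation passes to the closure by the same slack estimate) and satisfies $\mathcal{F}(\overline{\bigcup_n A_n})=\overline{\bigcup_n\mathcal{F}(A_n)}$; and the covering property yields $(iv)$. Hence $\{P_A\}$ is a commutative $\lambda$-projectional skeleton, so $\mathcal{F}(M)$ is $\lambda$-Plichko, and $M$ has the $\mathcal{F}$-Lipschitz SRP by the Corollary.

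The main obstacle is precisely the separability lemma at the \emph{sharp} radius $\tfrac1\lambda d(a,p)$: the hypothesis grants separability only for radii strictly below $\tfrac1\lambda d(a,p)$, and a naive uniform choice of approximating centers would push the radius past the separable range. The density-plus-slack argument is what keeps the construction at the critical radius, and this is exactly what produces projections of norm \emph{exactly} $\le\lambda$ rather than $\le\tfrac1r$ for some $r<\tfrac1\lambda$, which is what the statement demands; the only other point requiring mild care is transferring radial saturation to the closures that arise in the $\sigma$-completeness axiom.
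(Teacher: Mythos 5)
Your proposal is correct in substance, but it follows a genuinely different route from the paper's proof. The paper verifies the definition of $\lambda$-Plichko directly: it first shows (Proposition \ref{separableisnorming}) that the subspace $S_0\subset\mathrm{Lip}_0(M)$ of separably supported Lipschitz functions is $\lambda$-norming, via explicit cone-shaped functions $\tau_x$, lattice operations, McShane extension and a norming criterion from \cite{Kal04}; it then constructs (Lemma \ref{Densesubsetwithcountableintersection}, by Zorn's lemma applied to disjoint families of open separable sets) a dense set $D\subset M$ whose intersection with every separable subset is countable, and takes $\Delta=\{\delta(x)\colon x\in D\}$. You instead manufacture a commutative $\lambda$-projectional skeleton out of the radial retractions $R_A$ (collapsing everything outside a radially saturated, closed, separable $A\ni p$ to the base point) and invoke the Kubi\'{s} equivalence between $\lambda$-Plichko and commutative $\lambda$-projectional skeletons, which the paper records but does not actually need for this theorem. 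Your key computations are sound: the bound $\mathrm{Lip}(R_A)\le\lambda$ at the critical radius, the identity $R_AR_B=R_{A\cap B}=R_BR_A$ giving commutativity and axiom (ii), and the slack estimate showing that saturation at the sharp radius $\tfrac1\lambda d(a,p)$ preserves separability and passes to closures. What your route buys is notable: the $R_A$ are genuine Lipschitz retractions of $M$, so you actually prove the stronger conclusion that $M$ admits a commutative $\lambda$-Lipschitz retractional skeleton (hence has the Lipschitz SRP, not merely the $\mathcal{F}$-Lipschitz SRP), and the skeleton ranges are the free spaces $\mathcal{F}(A)$ of subsets of $M$, which is exactly the form needed for the $\mathcal{F}$-Lipschitz SRP. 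The price is reliance on Kubi\'{s}'s nontrivial characterization from \cite{Kub09}, whereas the paper's argument checks the Plichko property from its definition and is self-contained apart from the norming criterion.

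One step of yours needs a patch. Covering $M$ pointwise by admissible sets does not by itself give axiom (iv) of Definition \ref{skeletons}: that axiom demands that every $\mu\in\mathcal{F}(M)$ lie in a single $P_A\big(\mathcal{F}(M)\big)=\mathcal{F}(A)$, whereas point-covering plus directedness only shows that $\bigcup_A\mathcal{F}(A)$ is a dense subspace of $\mathcal{F}(M)$. The repair is immediate with tools you already have: either use, as the paper does in Proposition \ref{metricprojectiveskeelton}, that $\mathrm{supp}(\mu)$ is a closed separable subset of $M$ with $\mu\in\mathcal{F}\big(\mathrm{supp}(\mu)\big)$, or write $\mu=\lim_k\mu_k$ with each $\mu_k$ finitely supported; in either case the relevant separable set, together with $p$, is contained in an admissible $A$ by your saturation lemma, and then $\mu\in\mathcal{F}(A)$ because $\mathcal{F}(A)$ is closed in $\mathcal{F}(M)$.
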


In order to prove this theorem, we need two auxiliary results. First, define 
$$S_0=\{f\in\text{Lip}_0(M)\colon \text{supp}(f)\text{ is separable }\}\subset \text{Lip}_0(M).$$
This set is clearly a closed linear subspace of $\text{Lip}_0(M)$. Moreover, we have the following proposition:

\begin{proposition}
\label{separableisnorming}
Let $M$ be a metric space and $\lambda\geq 1$. If for every $p\in M$, and every $0<r<\frac{1}{\lambda}$, the set $B(p,r\cdot d(p,0))$ is countable, then $S_0$ is $\lambda$-norming.
\end{proposition}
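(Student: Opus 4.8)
The plan is to reduce the statement to finitely supported elements of $\mathcal{F}(M)$ and then, for each such element, to build an explicit norming functional with separable support by restricting a norming Lipschitz function to a suitable countable set and extending it by zero.

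Since $S_0$ is a linear subspace of $\text{Lip}_0(M)=\mathcal{F}(M)^*$, showing that it is $\lambda$-norming amounts to proving that $N(\mu):=\sup\{\langle f,\mu\rangle\colon f\in S_0,\ \|f\|_{\text{Lip}}\le 1\}\ge \tfrac1\lambda\|\mu\|$ for every $\mu\in\mathcal{F}(M)$. The map $N$ is a $1$-Lipschitz seminorm and the inequality $N\ge\tfrac1\lambda\|\cdot\|$ is closed, so by density of the finitely supported measures in $\mathcal{F}(M)$ it suffices to treat a finitely supported $\mu$ with finite support $F=\text{supp}(\mu)$. Fixing $\varepsilon>0$ and $r\in(0,\tfrac1\lambda)$, I will produce $f\in S_0$ with $\|f\|_{\text{Lip}}\le\tfrac1r$ and $\langle f,\mu\rangle\ge(1-\varepsilon)\|\mu\|$; normalizing $f$ and letting $r\uparrow\tfrac1\lambda$ and $\varepsilon\downarrow0$ then yields $N(\mu)\ge\tfrac1\lambda\|\mu\|$.

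The construction goes as follows. Starting from $A_0=F\cup\{0\}$, define inductively $A_{n+1}=A_n\cup\bigcup_{p\in A_n}B(p,r\,d(p,0))$; by hypothesis each ball $B(p,r\,d(p,0))$ is countable, so $A:=\bigcup_n A_n$ is countable and \emph{$r$-ball-closed}, meaning $B(p,r\,d(p,0))\subseteq A$ for every $p\in A$. This yields the key gap property: if $y\in A$ and $x\in M\setminus A$, then $x\notin B(y,r\,d(y,0))$, hence
\[ d(x,y)\ \ge\ r\,d(y,0). \]
Now pick $g\in\text{Lip}_0(M)$ with $\|g\|_{\text{Lip}}\le 1$ and $\langle g,\mu\rangle=\|\mu\|$ (attained by weak$^*$ compactness of the unit ball of $\text{Lip}_0(M)$, or up to an $\varepsilon$ loss otherwise), and define $f=g$ on $A$ and $f=0$ on $M\setminus A$.

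It remains to verify the three required properties. Since $\mu$ is supported in $A$ and $f|_A=g|_A$, we get $\langle f,\mu\rangle=\langle g,\mu\rangle=\|\mu\|$. Because $f$ vanishes on the open set $M\setminus\overline{A}$, its support is contained in the separable set $\overline{A}$, so $f\in S_0$. The main point is the Lipschitz estimate, which I expect to be the crux of the argument. For $x,x'\in A$ we have $|f(x)-f(x')|\le d(x,x')$; for $x,x'\in M\setminus A$ the difference vanishes; and for $y\in A$, $x\in M\setminus A$, using $g(0)=0$ together with the gap property,
\[ |f(x)-f(y)|=|g(y)|=|g(y)-g(0)|\le d(y,0)\le \tfrac1r\,d(x,y). \]
Hence $\|f\|_{\text{Lip}}\le\tfrac1r$, which completes the argument. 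The whole proof hinges on this last estimate: the radial ball-closure of $A$ is engineered precisely so that the elementary bound $|g(y)|\le d(y,0)$ transfers, through the gap inequality, into the sharp Lipschitz constant $\tfrac1r\to\lambda$; a cruder separation estimate (for instance bounding $d(y,0)$ via $d(y,x)+d(x,0)$) would introduce an additive loss and only deliver $(\lambda+C)$-norming.
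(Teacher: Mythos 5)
Your proof is correct, but it takes a genuinely different route from the paper's. The paper reduces $\lambda$-norming of $S_0$ to a finite extension property via Lemma 3.3 of \cite{Kal04}: it suffices that for every finite $F\ni 0$, every $\varepsilon>0$ and every norm-one $f\in\text{Lip}_0(M)$ there is $g\in S_0$ with $g_{|F}=f_{|F}$ and $\|g\|_\text{Lip}\le\lambda(1+\varepsilon)$; it then manufactures $g$ from scratch as a pointwise maximum (resp.\ minimum) of tent functions $\tau_x(p)=\max\{f(x)-\lambda(1+\varepsilon)d(p,x),0\}$ over the points $x\in F$ with $f(x)>0$ (resp.\ the analogous minimum where $f(x)<0$), the key point being that $\text{supp}(\tau_x)$ lies in the ball $B\big(x,(\lambda(1+\varepsilon))^{-1}d(x,0)\big)$, which the hypothesis makes small. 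You instead work on the predual side: you reduce to finitely supported $\mu$ by an elementary density-plus-continuity argument, take a Hahn--Banach functional $g$ norming $\mu$, and force separable support by truncating $g$ to a countable $r$-ball-closed set $A$, the gap inequality $d(x,y)\ge r\,d(y,0)$ for $y\in A$, $x\notin A$ being exactly what keeps the truncation $\tfrac1r$-Lipschitz. Both arguments exploit the same geometric feature (balls whose radius is proportional to the distance to the base point are small), but each buys something: yours is self-contained, avoiding the appeal to Kalton's characterization of norming subspaces, and modifies an optimal functional rather than building one; the paper's tent construction, on the other hand, works verbatim when the balls are only assumed \emph{separable} rather than countable, which is the form of the hypothesis actually invoked in Theorem \ref{Plichkoinfreespaces}, whereas your closure iteration uses countability (a union of separable balls over all $p\in A_n$ need not be separable), and would need an extra step in that setting --- e.g.\ saturating only over countable dense subsets of each $A_n$ and using that the balls are open --- to recover the ball-closure property.
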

\begin{proof}
By Lemma 3.3 of \cite{Kal04}, it is enough to show that for every finite set $F\subset M$ with $0\in F$, every $\varepsilon>0$ and every Lipschitz function $f\in \text{Lip}_0(F)$ with $\|f\|_\text{Lip}=1$ there exists a function $g\in S_0$ such that $g_{|F}=f$ and $\|g\|_\text{Lip}\leq \lambda(1+\varepsilon)$. Using McShane's extension theorem, this is equivalent to proving that for every finite set $F\subset M$ with $0\in F$, every $\varepsilon>0$ and every function $f\in \text{Lip}_0(M)$ with $\|f\|_\text{Lip}=1$, there exists a function $g\in S_0$ such that $g_{|F}=f_{|F}$ and $\|g\|_\text{Lip}\leq \lambda(1+\varepsilon)$.

Fix $f\in \text{Lip}_0(M)$ with $\|f\|_\text{Lip}=1$. Define the subsets $P=\{p\in M:f(p)>0\}$, $N=\{p\in M:f(p)<0\}$ and $Z=\{p\in M:f(p)=0\}$. 

Fix $x_0\in P$ and $\varepsilon>0$, and define $\tau_{x_0}(p)=\max\{f(x_0)-\lambda(1+\varepsilon)d(p,x_0),0\}$. Put $D_{x_0}=\{p\in M,~ \tau_{x_0}(p)>0\}$ ($D_{x_0}$ is the topological interior of the support of $\tau_{x_0}$). We claim that $D_{x_0}\subset P$. Indeed, let $p\in D_{x_0}$. Then $\tau_{x_0}(p)=f(x_0)-\lambda(1+\varepsilon)d(p,x_0)>0$. Equivalently, $d(p,x_0)<\big(\lambda(1+\varepsilon)\big)^{-1}f(x_0)$. 

Also, since $\|f\|_\text{Lip}=1$, we have that $|f(x_0)-f(p)|<\big(\lambda(1+\varepsilon)\big)^{-1}f(x_0)$. Thus,

$$f(p)\geq f(x_0)-(\lambda(1+\varepsilon))^{-1}f(x_0)=f(x_0)\Big(1-(\lambda(1+\varepsilon))^{-1}\Big)>0, $$
as we claimed. It is also clear that $x_0\in D_{x_0}$. It follows that $P=\bigcup_{x\in P} D_{x}$.

Similarly, for $x_0\in N$ and $\varepsilon>0$, we define $\tau_{x_0}=\min\{f(x_0)+\lambda(1+\varepsilon)d(p,x_0),0\}$ and $D_{x_0}=\{p\in M,~ \tau_{x_0}(p)<0\}$. Following the same reasoning as before, we get $N=\bigcup_{x\in N}D_x$. In particular, if $x\in P$ and $y\in N$, we get $D_x\cap D_y=\emptyset$. 

Now let $F\subset M$ be a finite set with $0\in F$. Put $FP=F\cap P$, $FN=F\cap N$ and $FZ=F\cap Z$. Define a function $g\colon M\rightarrow M$ in the following way:
$$ 
g(p)=
\begin{cases}
\bigvee\limits_{x\in FP} \tau_{x}(p), &\text{ if }p\in P\\
\bigwedge\limits_{x\in FN} \tau_{x}(p), &\text{ if }p\in N\\
0, &\text{ if }p\in Z
\end{cases}
$$
This function has the desired properties, that is:

\begin{itemize}
    \item[(i)] $g(p)=f(p)$ for all $p\in F$,
    \item[(ii)] $g(0)=0$,
    \item[(iii)] $g\in S_0$, and
    \item[(iv)] $\|g\|_\text{Lip}\leq \lambda(1+\varepsilon)$.
\end{itemize}

Let us check this. Let $p\in F$. Suppose that $p\in FP$. Then $g(p)\geq \tau_p(p)=f(p)$ by definition. Let $x$ be an arbitrary point in $FP$. Then, since $\|f\|_\text{Lip}=1$ and $\lambda\geq 1$:

\begin{align*}
    \tau_{x}(p)&=f(x)-\lambda(1+\varepsilon)d(p,x)=f(x)-\lambda d(p,x)-\lambda\varepsilon d(p,x)\\
    &\leq f(x)-d(p,x)\leq f(x)-(f(x)-f(p))= f(p)
\end{align*}
Hence $g(p)=f(p)$. By a similar argument we see that if $q\in FN$, then $g(q)=f(q)$, and clearly if $z\in FZ$, by definition $g(z)=f(z)=0$. We have proven $(i)$ and $(ii)$ since $0\in F$.

To see $(iii)$, we need to prove that $g$ has a separable support. Note that $\text{supp}(g)=\bigcup_{x\in F} \text{supp}(\tau_x)$. Since $F$ is finite, it suffices to show that $\text{supp}(\tau_x)$ is separable for every $x\in F$. Suppose $x_0\in FP$ and let $p\in M$ with $d(p,x_0)>(\lambda(1+\varepsilon))^{-1}d(x_0,0)$. Then $\lambda(1+\varepsilon )d(p,x_0)>d(x_0,0)$, so

$$f(x_0)-\lambda(1+\varepsilon)d(p,x_0)>f(x_0)-d(x_0,0)<f(0)=0,$$
which implies that $\tau_{x_0}(p)=0$. Thus $\text{supp}(\tau_{x_0})\subset B(x_0, (\lambda(1+\varepsilon))^{-1}d(x_0,0))$, which is separable by hypothesis. The same reasoning applies if $x_0\in FN$, so we conclude that $g$ has separable support and thus condition $(iii)$ is verified.

Property $(iv)$ follows from the definition of $\tau_x$ for every $x\in M$. 
\end{proof}

We have now a norming subspace of $\text{Lip}_0(M)$, and we are going to find a linearly dense subset $\Delta$ in $\mathcal{F}(M)$ such that $S_0$ is countably supported in $\Delta$. Notice that if a subset $D$ of $M$ is dense, the corresponding subset $\Delta_D=\{\delta(p)\colon p\in D\}$ in $\mathcal{F}(M)$ is linearly dense. Hence, if we find a dense subset of $M$ such that its intersection with each separable subset of $M$ is countable, we will be able to prove Theorem \ref{Plichkoinfreespaces}. Such a dense subset does not exist for every metric space, but fortunately, the same geometric condition on the separability of the balls around every point except $0$ we used to prove $S_0$ is norming is sufficient to construct a dense set with this property. This follows from a standard maximality argument, but we include the proof for completeness.
\begin{lemma}
\label{Densesubsetwithcountableintersection}
Let $M$ be a metric space such that every point in $M$ has a separable neighborhood ($M$ is locally separable). Then there exists a dense set $D$ in $M$ such that for every separable subset $S$ of $M$, the intersection $D\cap S$ is countable. 
\end{lemma}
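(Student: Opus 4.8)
The plan is to construct $D$ via a transfinite maximality argument. The key structural idea is that local separability lets me cover $M$ by a maximal family of pairwise-disjoint separable "cells" and then pick a dense set that meets each cell in a countable piece while remaining globally sparse on separable subsets. First I would invoke local separability to decompose $M$ into separable pieces: consider the equivalence relation where $x\sim y$ if they can be joined by a finite chain of overlapping separable open sets. Each equivalence class is open, separable (a countable chain of separable sets is separable, and by a maximality/Lindel\"of-type argument each class is a countable union of separable neighborhoods), and the classes partition $M$ into a family $\{M_i\}_{i\in I}$ of pairwise-disjoint clopen separable subsets. This reduces the problem to the separable case on each $M_i$ while keeping the pieces metrically isolated from one another.

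Next, within each separable component $M_i$ I would choose a countable dense subset $D_i\subset M_i$, and set $D=\bigcup_{i\in I} D_i$. Density of $D$ in $M$ is immediate since each $D_i$ is dense in the open set $M_i$ and the $M_i$ cover $M$. The crucial verification is the countable-intersection property: given any separable subset $S\subset M$, I must show $D\cap S$ is countable. Here I would use that $S$, being separable, can meet only countably many of the components $M_i$. This is because if $S$ met uncountably many components, then picking one point of $S$ from each would produce an uncountable subset of $S$ that is uniformly discrete (points in distinct clopen components are separated), contradicting the separability (hence second countability) of $S$. Once $S$ meets only countably many $M_i$, say $M_{i_1},M_{i_2},\dots$, we have $D\cap S\subset\bigcup_n (D_{i_n}\cap S)\subset\bigcup_n D_{i_n}$, a countable union of countable sets, hence countable.

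The main obstacle, and the step requiring the most care, is justifying that the components $M_i$ are genuinely separable and that distinct components are metrically separated enough to force the discreteness used above. The chain-connected components under the overlapping-separable-neighborhood relation are automatically open, but proving each is separable needs a countable-chain argument: one shows that within a single component, starting from any point and repeatedly adjoining separable neighborhoods of points already reached, the process stabilizes after countably many steps on any fixed separable subset, so each component is a countable union of separable sets and thus separable. The separation between components is the subtler point: distinct components are disjoint open sets, but to conclude that a transversal is uniformly discrete I should instead argue directly from second-countability of $S$ --- a separable metric space has a countable base, and a family of disjoint nonempty open sets (the traces $M_i\cap S$) in a second-countable space is countable, which is exactly what is needed. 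I would therefore phrase the final step using the countable-chain-condition of separable metric spaces rather than uniform discreteness, which sidesteps any quantitative separation estimate and makes the argument clean.
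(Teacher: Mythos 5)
Your proof has a genuine gap: the central structural claim --- that the equivalence classes of the chain relation (finite chains of overlapping separable open sets) are separable --- is false. Counterexample: let $M$ be an uncountable set with the discrete metric $d(x,y)=1$ for $x\neq y$. This space is complete and locally separable (every singleton is open). But for any two points $x,y$, the two-point set $\{x,y\}$ is a separable open set containing both, so any two points are chain-equivalent; hence there is exactly one equivalence class, namely $M$ itself, which is not separable (nor a countable union of separable sets). Your heuristic that ``the process stabilizes after countably many steps'' fails precisely because a single separable set can meet uncountably many other separable open sets, so each step of adjoining neighborhoods can add uncountably many new sets. Once the classes fail to be separable, you cannot choose the countable dense sets $D_i$, and the rest of the argument collapses. (Your final step --- replacing uniform discreteness by the countable chain condition of separable metric spaces --- is correct and is exactly what is needed; the problem is only in producing the decomposition.)

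The decomposition can be repaired in two ways. The paper's route avoids chain components entirely: by Zorn's lemma, take a \emph{maximal} family $\{A_i\}_{i\in I}$ of pairwise disjoint nonempty separable open subsets of $M$; maximality together with local separability forces $\bigcup_i A_i$ to be \emph{dense} in $M$ (not equal to $M$, but density is all that is needed), and then $D=\bigcup_i D_i$ with $D_i$ countable dense in $A_i$ works, the countability of $D\cap S$ following from the same CCC argument you give. Alternatively, your chain idea can be saved if you first replace the cover of $M$ by all separable open sets with a $\sigma$-discrete open refinement (Stone's theorem); in a $\sigma$-discrete family each separable member meets only countably many other members (again by CCC), so the chain components of \emph{that} family really are countable unions of separable sets, hence separable, and they do give the clopen partition you wanted. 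In the uncountable discrete example this produces the partition into singletons rather than one giant class.
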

\begin{proof}
Consider the following set:

\begin{align*}
    T=\big\{\{A_i\}_{i\in I}\subset\mathcal{P}(M)\colon &A_i\text{ is non-empty, open and separable for all }i\in I,\\ &A_i\cap A_j=\emptyset\text{ for all }i\neq j\in I\big\}, 
\end{align*}
which is non-empty since $M$ is locally separable. The set $T$ can be ordered by inclusion, and it is straightforward to check that every chain in $T$ has an upper bound given by the union of every family in the chain. Hence, by Zorn's Lemma we can consider $F_0=\{A_i\}_{i\in I}$ a maximal family in $T$. Then, since $F_0$ is maximal in $T$ and $M$ is locally separable, we have that $M=\overline{\bigcup_{i\in I}A_i}$.

Choose for every $i\in I$ a countable set $D_i$ dense in $A_i$, and set $D=\bigcup_{i\in I}D_i$. Let us check that $D$ verifies the thesis of the Lemma: Let $S$ be a separable subset of $M$. Then $S$ has the countable chain condition, so there exists a countable subset $F_0'=\{A_{i_n}\}_{n\in\mathbb{N}}$ of $F_0$ such that 

$$S\cap \bigcup_{i\in I}A_i=S\cap \bigcup_{n\in\mathbb{N}}A_{i_n}.$$
Therefore, since $D_i\subset A_i$ for all $i\in I$, we obtain that $S\cap D=\bigcup_{n\in\mathbb{N}}S\cap D_{i_n}$, which is countable since it is the countable union of countable sets. 

\end{proof}

Finally, we can prove Theorem \ref{Plichkoinfreespaces}:

\begin{proof}[Proof of Theorem \ref{Plichkoinfreespaces}]
We may assume without loss of generality that $p=0$ is the distinguished point of $M$, since the Lipschitz free spaces of the same metric space with different distinguished points are linearly isometric. 

Put $N=S_0$ as a closed subspace of $\text{Lip}_0(M)$. By proposition \ref{separableisnorming}, $N$ is $\lambda$-norming. By hypothesis, the set $M\setminus \{0\}$ is locally separable, so by Lemma \ref{Densesubsetwithcountableintersection}, we can find $D'\subset M\setminus\{0\}$ dense such that $D'$ intersects every separable subset of $M\setminus\{0\}$ in a countable set. Clearly, the set $D=D'\{0\}$ also verifies that it is dense in $M$ and for every separable subset $S$ of $M$, the intersection $D\cap S$ is countable. Put $\Delta=\{\delta(x)\colon x\in D\}$. Then $\Delta$ is linearly dense in $\mathcal{F}(M)$, and for every $f\in S_0$ we have that 
$$\{x\in D\colon \langle f,\delta(x)\rangle\neq 0\}= \text{supp}(f)\cap D$$
is countable. We conclude that $\mathcal{F}(M)$ is $\lambda$-Plichko.
\end{proof}

\subsection{Lipschitz retractional skeletons in metric spaces and the Lipschitz SRP}
We pass now to  studying the Lipschitz SRP. In \cite{Kal20}, Kalenda studies the concept of retractional skeleton in the context of compact Hausdorff spaces, as an analogous concept to projectional skeletons in the topological setting. In metric spaces, we can define \emph{Lipschitz retractional skeletons}:
\begin{definition}
Let $M$ be a metric space. A \emph{Lipschitz retractional skeleton} is a set $\{R_s\}_{s\in\Gamma}$ of Lipschitz retractions in $M$ indexed by a directed partially ordered $\sigma$-complete set $\Gamma$, such that the following conditions hold:
\begin{itemize}
    \item[(i)] $R_s(M)$ is separable for all $s\in \Gamma$. 
    \item[(ii)] $R_sR_t=R_tR_s=R_s$ whenever $s,t\in\Gamma$ and $s\leq t$.
    \item[(iii)] If $(s_n)_n$ is an increasing sequence of indices in $\Gamma$, then $s=\sup_{n\in\mathbb{N}}s_n$ exists in $\Gamma$ and $R_s(M)=\overline{\bigcup_{s_\in\mathbb{N}}R_{s_n}(M)}$.
    \item[(iv)] $M=\bigcup_{s\in\Gamma}R_s(M)$.
\end{itemize}
If $\|R_s\|_\text{Lip}\leq r$ for all $s\in\Gamma$, then we say that $\{R_s\}_{s\in\Gamma}$ is an $r$-Lipschitz retractional skeleton. If $R_sR_t=R_tR_s$ for all $s,t\in \Gamma$, then we say that the Lipschitz retractional skeleton is commutative. 
\end{definition}

As in the linear case, it is straightforward to see that if $M$ admits a Lipschitz retractional skeleton, then $M$ has the Lipschitz SRP.

Thanks to the linearization property of Lipschitz free spaces, we can deduce the existence of a projectional skeleton in $\mathcal{F}(M)$ provided $M$ admits a Lipschitz retractional skeleton:

\begin{proposition} 
\label{metricprojectiveskeelton}
Let $M$ be a complete metric space and $r\geq 1$. Suppose that $M$ admits a (commutative) $r$-Lipschitz retractional skeleton on $M$.  Then $\mathcal{F}(M)$ admits a (commutative) $r$-projectional skeleton.
\end{proposition}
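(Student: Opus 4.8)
The strategy is to take the given Lipschitz retractional skeleton $\{R_s\}_{s\in\Gamma}$ on $M$ and linearize each retraction using the universal linearization property of Lipschitz free spaces. The natural candidate is to set $P_s = \widehat{R_s}$, the linear operator $\widehat{R_s}\colon \mathcal{F}(M)\to\mathcal{F}(M)$ induced by $R_s$ (viewed as a map $M\to M$ with $R_s(0)=0$, which we may arrange since $R_s(M)$ is a retract containing $0$). I must then verify the four defining properties of a projectional skeleton for this family, indexed by the same directed $\sigma$-complete set $\Gamma$.

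First I would check that each $P_s$ is a bounded linear projection: linearity and boundedness with $\|P_s\|=\|R_s\|_{\mathrm{Lip}}\le r$ are immediate from the linearization property, and the projection identity $P_s^2=P_s$ follows from $R_s\circ R_s = R_s$ (since $R_s$ is a retraction onto $R_s(M)$), because $\widehat{R_s\circ R_s}=\widehat{R_s}\circ\widehat{R_s}$. Next, property (i): since $R_s(M)$ is separable and $R_s$ maps $M$ into $R_s(M)$, one sees that $P_s(\mathcal{F}(M))\subset \mathcal{F}(R_s(M))$, which is separable because $R_s(M)$ is; in fact $\widehat{R_s}$ factors through $\mathcal{F}(R_s(M))$, giving $P_s(\mathcal{F}(M))=\mathcal{F}(R_s(M))$. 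For property (ii), the relations $R_sR_t=R_tR_s=R_s$ for $s\le t$ pass through the functorial linearization $\widehat{(\cdot)}$ to give $P_sP_t=P_tP_s=P_s$, and commutativity transfers in exactly the same way.

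The step I expect to be the main obstacle is property (iii), the $\sigma$-continuity condition: given an increasing sequence $(s_n)_n$ with supremum $s$ in $\Gamma$, I must show $P_s(\mathcal{F}(M))=\overline{\bigcup_n P_{s_n}(\mathcal{F}(M))}$. The retractional skeleton gives me the metric-level statement $R_s(M)=\overline{\bigcup_n R_{s_n}(M)}$, so using the factorization from property (i) this reduces to showing $\mathcal{F}(R_s(M))=\overline{\bigcup_n \mathcal{F}(R_{s_n}(M))}$. The inclusion $\overline{\bigcup_n \mathcal{F}(R_{s_n}(M))}\subset \mathcal{F}(R_s(M))$ is clear from monotonicity, but the reverse inclusion is the delicate part: I need that $\mathcal{F}$ of an increasing union of closed sets is the closed span of the corresponding free spaces. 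The clean way to obtain this is to verify the equivalent norm-convergence formulation from the remarks after Definition \ref{skeletons}, namely that $\lim_n P_{s_n}(\mu)=P_s(\mu)$ in norm for every $\mu\in\mathcal{F}(M)$; since the family is uniformly bounded by $r$, it suffices to check convergence on the linearly dense set $\{\delta(x):x\in M\}$. For a fixed $x$, convergence $\widehat{R_{s_n}}(\delta(x))=\delta(R_{s_n}(x))\to\delta(R_s(x))=\widehat{R_s}(\delta(x))$ follows once I show $R_{s_n}(x)\to R_s(x)$ in $M$, which is where property (iii) of the retractional skeleton together with the directedness of $\Gamma$ must be invoked. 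Finally, property (iv) for the $P_s$ follows from property (iv) for the $R_s$ via the same factorization and a density argument, after which Proposition \ref{reductiontofreeskeleton} may be cited to identify the ranges with free spaces of separable subsets.
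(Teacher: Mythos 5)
Your overall strategy---linearizing each $R_s$ to $P_s=\widehat{R_s}$ and checking the four axioms---is exactly the paper's, and your treatment of (i) and (ii) is fine. For property (iii), however, there is an unproven key step. Your reduction to pointwise convergence is legitimate (your family is uniformly bounded by $r$, so the norm-convergence reformulation applies), but the convergence $R_{s_n}(x)\to R_s(x)$ is \emph{not} an immediate consequence of ``property (iii) together with directedness'': property (iii) of the retractional skeleton only says $R_s(M)=\overline{\bigcup_n R_{s_n}(M)}$, i.e.\ that $R_s(x)$ is approximated by points $R_{s_n}(y)$ for possibly different $y$. To upgrade this you must also use the commutation relations (ii) and the uniform bound: pick $m$ and $y\in R_{s_m}(M)$ with $d(R_s(x),y)<\varepsilon$; then for $n\geq m$ one has $R_{s_n}(y)=y$ and $R_{s_n}(x)=R_{s_n}\big(R_s(x)\big)$, so $d\big(R_{s_n}(x),R_s(x)\big)\leq d\big(R_{s_n}(R_s(x)),R_{s_n}(y)\big)+d\big(y,R_s(x)\big)\leq (r+1)\varepsilon$. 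The paper avoids needing this altogether: it only shows $P_s(\delta(x))\in\overline{\bigcup_n P_{s_n}(\mathcal{F}(M))}$ using the set-level density (allowing $y\neq x$), and then concludes by linearity of $P_s$ and linear density of $\delta(M)$, since the closure of the increasing union of ranges is a linear subspace.

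The serious gap is property (iv). You propose ``the same factorization and a density argument'', but a density argument can only yield $\mathcal{F}(M)=\overline{\bigcup_s P_s(\mathcal{F}(M))}$, whereas axiom (iv) demands that the (non-closed) union itself equal $\mathcal{F}(M)$. From $M=\bigcup_s R_s(M)$ and directedness you do get that every \emph{finitely supported} element lies in some $P_s(\mathcal{F}(M))$, but a general $\mu\in\mathcal{F}(M)$ is only a limit of such elements, and nothing in your plan places it inside a single range. This is precisely where the paper spends the bulk of its proof: it takes the support of $\mu$ (a closed separable subset with $\mu\in\mathcal{F}(\mathrm{supp}(\mu))$), chooses a dense sequence $(x_n)$ in it, inductively builds an increasing sequence $(s_n)$ in $\Gamma$ with $x_n\in R_{s_n}(M)$ (using (iv) for the $R_s$ and directedness), and then uses $\sigma$-completeness and (iii) to produce $s=\sup_n s_n$ with $\mathrm{supp}(\mu)\subset R_s(M)$, whence $\mu\in\mathcal{F}(R_s(M))=P_s(\mathcal{F}(M))$. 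Finally, citing Proposition \ref{reductiontofreeskeleton} at the end is both circular and unnecessary: that proposition presupposes that a projectional skeleton on $\mathcal{F}(M)$ already exists, and in your construction the ranges are already identified as $\mathcal{F}(R_s(M))$ by the factorization.
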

\begin{proof}
Let $\{R_s\}_{s\in\Gamma}$ be an $r$-Lipschitz retractional skeleton in $M$. Let $P_s\colon =\widehat{R_s}\colon \mathcal{F}(M)\rightarrow\mathcal{F}(M)$ be the linear maps such that $\|P_s\|=\|R_s\|_\text{Lip}$ and $P_s(\delta(x))=\delta(R_s(x))$ for all $x\in M$. Let us check that this family is a projectional skeleton on $\mathcal{F}(M)$. 

In the first place, since $R_s(M)$ is separable for all $s\in \Gamma$, and $P_s(\mathcal{F}(M))$ is equal to  $\mathcal{F}(R_s(M))$, we obtain that $P_s(\mathcal{F}(M))$ is separable for all $s\in \Gamma$. Next, suppose $s,t\in \Gamma$ with $s\leq t$ and take $x\in M$. We have then that 
\begin{align*}
    P_sP_t(\delta(x))=\delta(R_sR_t(x))=\delta(R_s(x))=P_s(\delta(x)),
\end{align*}
and similarly for $P_tP_s(\delta(x))$. Since $P_sP_t$, $P_tP_s$ and $P_s$ are bounded linear maps and $\delta(M)$ is a linearly dense subset of $\mathcal{F}(M)$, we obtain that $P_sP_t=P_tP_s=P_s$ as desired. 

Next, suppose that $(s_n)_n$ is an increasing sequence of indices in $\Gamma$, and let $s=\sup_{n\in\mathbb{N}}s_n$. Consider $x\in M$ and $\varepsilon>0$. By hypothesis there exists $n_0\in\mathbb{N}$ and $y\in M$ such that $d\big(R_s(x),R_{s_{n_0}}(y)\big)<\varepsilon$. Hence, since the $\delta$ map is an isometry, we have that $\|\delta\big(R_s(x)\big)-\delta\big(R_{n_0}(y)\big)\|<\varepsilon$. This implies that $\|P_s(\delta(x))-P_{n_0}(\delta(y))\|<\varepsilon$. Hence $P_s(\delta(x))\in\overline{\bigcup_{n_\in\mathbb{N}}P_{s_n}(\mathcal{F}(M))}$. 

Now, since $(s_n)$ is increasing, by the remark we made about condition $(ii)$ of Definition \ref{skeletons}, the family $\{P_{s_n}(\mathcal{F}(M))\}_n$ is increasing as well. This implies that $\overline{\bigcup_{n_\in\mathbb{N}}P_{s_n}(\mathcal{F}(M))}$ is a linear subspace of $\mathcal{F}(M)$. Then, by the linearity of $P_s$ and the fact that $\delta(M)$ is linearly dense in $\mathcal{F}(M)$, we obtain that

$$P_s(\mathcal{F}(M))=\overline{\bigcup_{n_\in\mathbb{N}}P_{s_n}(\mathcal{F}(M))},$$
as desired.

Finally, to prove that $\mathcal{F}(M)=\bigcup_{s\in\Gamma}P_s(\mathcal{F}(M))$, we again use the concept of support of an element of $\mathcal{F}(M)$. For all $\mu\in\mathcal{F}(M)$ the set $\text{supp}(\mu)\subset M$ is a closed separable subset such that $\mu\in\mathcal{F}(\text{supp}(\mu))$. Hence, if for every $\mu\in\mathcal{F}(M)$ we find $s\in\Gamma$ such that $\text{supp}(\mu)\subset R_s(M)$, we will obtain that $\mu\in\mathcal{F}(R_s(M))=P_s(\mathcal{F}(M))$, completing the proof.

To this end, consider $\mu\in\mathcal{F}(M)$, and let $(x_n)_n\subset \text{supp}(\mu)$ be a dense sequence. By hypothesis, for $x_1$ we can find $s_1\in\Gamma$ such that $x_1\in R_{s_1}(M)$. Suppose that we have constructed $(s_i)_{i=1}^n$ in $\Gamma$ such that $s_i\leq s_{i+1}$ for $1\leq i\leq n-1$ and such that $x_i\in R_{s_i}(M)$. By hypothesis there exists $s^*\in \Gamma$ such that $x_{n+1}\in R_{s^*}(M)$. Since $\Gamma$ is directed, we can find $s_{n+1}\in \Gamma$ such that $s_i\leq s_{n+1}$ for $1\leq i\leq n$ and $s^*\leq s_{n+1}$. 

This way we inductively construct an increasing sequence $(s_n)_n$ such that $x_n\in R_{s_n}(M)$. By item $(iii)$ in the hypothesis, there exists $s\in \Gamma$ such that $R_s(M)=\overline{\bigcup_{n_\in\mathbb{N}}R_{s_n}(M)}$. Since the dense sequence $(x_n)_n$ is contained in $\bigcup_{n_\in\mathbb{N}}R_{s_n}(M)$, it follows that $\text{supp}(\mu)\subset R_s(M)$. We conclude that $\{P_s\}_{s\in\Gamma}$ is a projective skeleton. The last two statements follow immediately.
\end{proof}

This yields the following result about Lipschitz free spaces of $1$-Plichko spaces.

\begin{corollary}
\label{XplichkoF(X)plichko}
Let $X$ be a $1$-Plichko Banach space. Then $\mathcal{F}(X)$ is $1$-Plichko.
\end{corollary}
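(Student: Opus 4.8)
The plan is to reduce the statement to Proposition \ref{metricprojectiveskeelton} by exhibiting a commutative $1$-Lipschitz retractional skeleton on $X$ viewed purely as a metric space, and then to invoke Kubi\'{s}'s characterization of $1$-Plichko spaces in both directions. Since $X$ is $1$-Plichko, by the result of Kubi\'{s} recalled above it admits a commutative $1$-projectional skeleton $\{P_s\}_{s\in\Gamma}$, that is, a family of commuting linear projections with $\|P_s\|\leq 1$ satisfying conditions $(i)$--$(iv)$ of Definition \ref{skeletons}.

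The key observation is that each linear projection $P_s\colon X\rightarrow P_s(X)$ of norm at most $1$ is automatically a Lipschitz retraction of $X$ onto its range, with $\|P_s\|_\text{Lip}=\|P_s\|\leq 1$; moreover, by linearity $P_s(0)=0$, so every $P_s$ fixes the base point and $0\in P_s(X)$. I would then verify that the same family $\{P_s\}_{s\in\Gamma}$, now regarded as a family of Lipschitz retractions, is a commutative $1$-Lipschitz retractional skeleton on the metric space $X$. This is essentially a matter of matching the two definitions: the separability of each $P_s(X)$, the absorption relations $P_sP_t=P_tP_s=P_s$ for $s\leq t$, the existence of suprema of increasing sequences together with $P_s(X)=\overline{\bigcup_n P_{s_n}(X)}$, and the equality $X=\bigcup_{s}P_s(X)$ are precisely conditions $(i)$--$(iv)$ of the projectional skeleton. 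In particular, condition $(iii)$ guarantees that $\Gamma$ is $\sigma$-complete as required in the definition of a Lipschitz retractional skeleton, and commutativity transfers verbatim because the original skeleton is commutative.

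Having a commutative $1$-Lipschitz retractional skeleton on the complete metric space $X$, I would apply Proposition \ref{metricprojectiveskeelton} to conclude that $\mathcal{F}(X)$ admits a commutative $1$-projectional skeleton, the induced projections being $\widehat{P_s}$. A final appeal to Kubi\'{s}'s characterization then yields that $\mathcal{F}(X)$ is $1$-Plichko.

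Since every step amounts either to a direct translation of definitions or to a citation of an already-established result, I do not anticipate a genuine obstacle. The only point requiring a moment's care is the verification that a norm-one linear projection really is a Lipschitz retraction fixing the base point, so that the linearization machinery of Proposition \ref{metricprojectiveskeelton} applies; this is immediate from linearity, but it is the hinge on which the whole argument turns.
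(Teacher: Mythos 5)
Your proposal is correct and takes essentially the same route as the paper's own proof: cite Kubi\'{s}'s characterization to obtain a commutative $1$-projectional skeleton on $X$, observe that norm-one linear projections are in particular $1$-Lipschitz retractions (so this skeleton is a commutative $1$-Lipschitz retractional skeleton), and apply Proposition \ref{metricprojectiveskeelton} together with Kubi\'{s}'s characterization once more to conclude that $\mathcal{F}(X)$ is $1$-Plichko. The paper compresses this into three lines, while you spell out the definitional matching, but the argument is identical.
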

\begin{proof}
If $X$ is $1$-Plichko, then it admits a commutative $1$-projectional skeleton. Since linear projections are in particular Lipschitz retractions, by Proposition \ref{metricprojectiveskeelton} the space $\mathcal{F}(X)$ also admits a commutative $1$-projectional skeleton. Hence $\mathcal{F}(X)$ is $1$-Plichko.
\end{proof}

\begin{remark}
In Proposition \ref{metricprojectiveskeelton}, we use the linearization property of Lipschitz free spaces to obtain projections in $\mathcal{F}(M)$ onto $\mathcal{F}(A_s)$ from a Lipschitz retraction in $M$ onto $A_s$. In general, if $\mathcal{F}(A_s)$ is complemented in $\mathcal{F}(M)$, we do not necessarily have that $A_s$ is a Lipschitz retraction of $M$. It is straightforward to prove that if $A_s$ if a Lipschitz retraction of $\mathcal{F}(A_s)$ (for instance, if $A_s$ is a Banach space, thanks to the barycenter map), then the two statements are indeed equivalent. 

However, this is not enough to prove the converse of Proposition \ref{metricprojectiveskeelton} even for Banach spaces, since the commutativity of the retractions (property (ii) in the definition of Lipschitz retractional skeleton) is lost in the process. Indeed, the converse is not true for general metric spaces: we construct in the next section a metric space without the Lipschitz SRP (and therefore without a Lipschitz retractional skeleton), but whose Lipschitz free space admits a projectional skeleton.
\end{remark}

To end the discussion about positive results in the Lipschitz SRP, let us comment that $C(K)$ Banach spaces for any compact Hausdorff space $K$ have the Lipschitz SRP:

\begin{proposition}
The Banach space $C(K)$ of real continuous functions has the Lipschitz SRP for any compact Hausdorff space $K$.
\end{proposition}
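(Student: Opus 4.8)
The plan is to reduce the proposition to a statement about metrizable quotients of $K$ and then to build a genuinely nonlinear retraction. Recall that the Lipschitz SRP asks that every separable closed (linear) subspace $Y\subset C(K)$ be contained in a separable closed subspace that is a Lipschitz retract of $C(K)$. So fix such a $Y$, choose a countable dense set $\{g_n\}_n\subset Y$, and consider the continuous map $\Phi=(g_n)_n\colon K\to\mathbb R^{\mathbb N}$. Its image $L:=\Phi(K)$ is compact metrizable and $\pi:=\Phi\colon K\to L$ is a continuous surjection. Setting $C_\pi:=\{g\circ\pi\colon g\in C(L)\}$, the operator $g\mapsto g\circ\pi$ is a linear isometry of $C(L)$ onto $C_\pi$; hence $C_\pi$ is a separable closed linear subspace of $C(K)$, and since every $g_n$ factors through $\pi$ we have $Y\subset C_\pi$. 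Thus it suffices to prove that for every continuous surjection $\pi\colon K\to L$ onto a compact metrizable space $L$, the subspace $C_\pi$ is a Lipschitz retract of $C(K)$.

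The natural candidate for the retraction exploits the lattice structure of $C(K)$. For $f\in C(K)$ define the fiber envelopes $u_f,v_f\colon L\to\mathbb R$ by $u_f(\ell)=\max_{x\in\pi^{-1}(\ell)}f(x)$ and $v_f(\ell)=\min_{x\in\pi^{-1}(\ell)}f(x)$. Then $u_f$ is upper semicontinuous, $v_f$ is lower semicontinuous, $v_f\le u_f$, and both depend $1$-Lipschitzly on $f$ for the supremum norm, since $|u_f(\ell)-u_h(\ell)|\le\|f-h\|_\infty$ and likewise for $v$. Moreover, if $f=g\circ\pi\in C_\pi$ then $f$ is constantly equal to $g(\ell)$ on the fiber $\pi^{-1}(\ell)$, so $u_f(\ell)=v_f(\ell)=g(\ell)$. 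Consequently, any rule $f\mapsto R(f)$ producing a continuous function on $L$ with $v_f\le R(f)\le u_f$ and depending Lipschitzly on $f$ is automatically a Lipschitz retraction of $C(K)$ onto $C_\pi$; the midrange $\tfrac12(u_f+v_f)$ is the first thing to try.

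The main obstacle, and the entire content of the proof, is to make the output continuous on $L$ while keeping it inside the band $[v_f,u_f]$. The envelopes are only one-sidedly semicontinuous, and here the upper semicontinuous $u_f$ sits above the lower semicontinuous $v_f$; this is precisely the configuration to which the Katetov--Tong insertion theorem does \emph{not} apply, and indeed for abstract semicontinuous $v\le u$ the band may contain no continuous function at all. What rescues the situation is that $f$ is continuous on $K$: if $x_n\to x_0$ with $\pi(x_n)\to\pi(x_0)$ then $f(x_n)\to f(x_0)$, which couples the values of $u_f$ and $v_f$ on nearby fibers and rules out the obstructing oscillations. The route I would take is to use the metric of $L$ to regularize, replacing the fiber $\pi^{-1}(\ell)$ by the thickened sets $\pi^{-1}(B(\ell,\delta))$ and letting $\delta\to0$ along a scheme tied to a modulus of continuity, so as to extract a continuous limit that still lies in $[v_f,u_f]$, reduces to $g$ on $C_\pi$, and varies $1$-Lipschitzly with $f$; an alternative is a closing-off argument enlarging $Y$ to a separable $Z$ carrying a compatible continuous selection. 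Establishing that such a scheme genuinely lands in $C(L)$, uniformly and Lipschitzly in $f$, is the one nontrivial estimate, since the failure of linear regular averaging operators for general $\pi$ shows that no pointwise or linear recipe can work.
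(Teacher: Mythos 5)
Your first reduction is correct, and it is essentially the paper's first step: factoring a countable dense subset of $Y$ through $\Phi=(g_n)_n$ yields a separable closed subspace $C_\pi\cong C(L)$ of $C(K)$, with $L$ compact metrizable, containing $Y$ (this is the content of Exercise 5.88 in \cite{FabHabHajMonZiz11}, which the paper cites). The gap is in the second half, and it is not merely an unfinished estimate: the framework you set up is provably impossible. You require $R(f)$ to be a continuous function on $L$ lying pointwise in the band $[v_f,u_f]$, and you assert that continuity of $f$ on $K$ ``rules out the obstructing oscillations.'' It does not. Take $K=\{0,1\}^{\mathbb N}$, $L=[0,1]$, $\pi(x)=\sum_{n\geq 1}x_n2^{-n}$, and $f(x)=x_1$, which is continuous on $K$. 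If $t<1/2$ then every $x\in\pi^{-1}(t)$ has $x_1=0$, so $u_f(t)=v_f(t)=0$; if $t>1/2$ then every preimage has $x_1=1$, so $u_f(t)=v_f(t)=1$; at $t=1/2$ the band is $[0,1]$. Hence every selection in the band equals $0$ on $[0,1/2)$ and $1$ on $(1/2,1]$, so the band contains no continuous function at all. The coupling coming from continuity of $f$ only forces the one-sided limits of any selection to equal the fiber values on the corresponding side, which is exactly the obstruction rather than its cure. The thickened-fiber regularization cannot repair this either: any limit it produces must still lie in $[v_f,u_f]$ wherever the band is pinched, and in the example the band is pinched to two different constants on the two sides of $1/2$.

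What is genuinely needed is a retraction that is allowed to leave the band, and that is a deep fact, not an elementary construction: the paper quotes Kalton's theorem (Theorem 3.5 in \cite{Kal07}) that $C(L)$, for $L$ compact metrizable, is an absolute $2$-Lipschitz retract, i.e.\ a $2$-Lipschitz retract of every metric space containing it isometrically. Applied to the isometric copy $C_\pi\subset C(K)$, this finishes the proof immediately. Your closing observation that regular averaging operators fail for general $\pi$ is the right instinct --- it correctly shows that no linear or fiberwise-pointwise recipe can work --- but the conclusion to draw from it is that one must invoke (or reprove) Kalton's nonlinear extension theorem, not that a fiberwise selection scheme can be pushed through.
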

\begin{proof}
Let $Y$ be a separable linear subspace of $C(K)$. Then, there exists a separable linear subspace of $C(K)$ that contains $Y$ and is isometric to a $C(K')$ space for some compact metric space $K'$ (see Exercise 5.88 in \cite{FabHabHajMonZiz11}). By Theorem 3.5 in \cite{Kal07}, $A$ is an absolute $2$-Lipschitz retract, so in particular it is $2$-Lipschitz retract of $C(K)$, which concludes the proof. 

\end{proof}

This shows that the Lipschitz free space of any $C(K)$ space has the SCP, in contrast to many $C(K)$ spaces themselves, like $\ell_\infty$, which strongly fail this property. Moreover, as we mentioned in the introduction, it was proven in \cite{KosSheSwi18} that under the Generalized Continuum Hypothesis, for any cardinality $\alpha$ we can find a compact Hausdorff space $K$ of such that $C(K)$ has density character $\alpha$, and it does not have any nontrivial complemented linear subspace.  

\section{Metric space with almost no separable Lipschitz retracts}
In this section we are going to construct a complete metric space $M$ such that no separable subspace containing two specific points is a Lipschitz retract of $M$. Hence, this metric space strongly fails the Lipschitz SRP, and does not admit a Lipschitz retractional skeleton. However, as we remark at the end of the section, its Lipschitz free space does admit a commutative $1$-projectional skeleton, and it is thus $1$-Plichko. 

\subsection{Fat subsets of $[0,1]$}
We are going to define certain nowhere dense, compact subsets of $[0,1]$ with positive measure, each of them associated to a particular decreasing sequence of real numbers. For the rest of this section, fix $0<\varepsilon_0<1/2$ and let $\mathbb{Q}\cap [0,1]=(q_n)_{n=1}^\infty$ be a fixed ordering of the rational numbers in the unit interval. 

Consider a decreasing sequence of real numbers $\gamma=(\gamma_i)_{i=1}^\infty$ such that 

\begin{itemize}
    \item[(i)] $\gamma_i>0$ for all $i\in \mathbb{N}$,
    \item[(ii)] $\sum_{i=1}^\infty \gamma_i\leq 1-\varepsilon_0$,
    \item[(iii)] $q_1+\gamma_1<1$.
\end{itemize}

Put $\Gamma=\{\gamma=(\gamma_i)_i\colon \gamma\text{ is decreasing and verifies (i), (ii) and (iii)}\}$ for the rest of the section.

For any given $\gamma\in \Gamma$, we define inductively the following intervals:
\begin{align*}
    C^\gamma_1&=(q_1,q_1+\gamma_1)\subset (0,1),\\
    C^\gamma_{i}&=(q_{n_i},q_{n_i}+\gamma_i),\text{ where }n_i=\min\bigg\{n\in\mathbb{N}\colon (q_n,q_n+\gamma_i)\subset (0,1)\setminus\bigg(\bigcup_{j<i} C^\gamma_j\bigg) \bigg\}. 
\end{align*}
Note that $n_i$ as defined might not exist for some $i\in \mathbb{N}$. In that case we simply put $C^\gamma_{i}=\emptyset$ and go on to the next index. By property $(ii)$, there are infinitely many $i\in\mathbb{N}$ such that $C^\gamma_i$ is nonempty. 

Using this, we define the closed subset $L_\gamma\subset [0,1]$ as

$$L_\gamma=[0,1]\setminus \bigg(\bigcup_{i=1}^\infty C^\gamma_i\bigg) $$

We call the nonempty $C^\gamma_i=(q_{n_i},q_{n_i}+\gamma_i)$ sets the \textit{gaps of $L_\gamma$}, and we refer to the points $q_{n_i}, q_{n_i}+\gamma_i$ as \textit{endpoints of $C^\gamma_i$ in $L_\gamma$}. 

\begin{proposition}
\label{propertieslgamma}
Let $0<\varepsilon_0<1/2$, $\gamma=(\gamma_i)_{i=1}^\infty \in \Gamma$, and let $C^\gamma_i$ and $L_\gamma$ be defined as above. Then $L_\gamma$ is a compact subset of $[0,1]$ that verifies:

\begin{itemize}
    \item[(1)] The Lebesgue measure of $L_\gamma$, denoted $\mu(L_\gamma)$, is greater than or equal to $\varepsilon_0$.
    \item[(2)] The points $0$ and $1$ belong to $L_{\gamma}$ for every $\gamma\in \Gamma$. 
    \item[(3)] For any gap $C^\gamma_i=(q_{n_i},q_{n_i}+\gamma_i)$, the endpoints $q_{n_i},q_{n_i}+\gamma_i$ belong to $L_\gamma$.  
    \item[(4)] If $x,y\in L_\gamma$ with $x<y$ and $(x,y)\cap L_\gamma=\emptyset$, then there exists a $k\in \mathbb{N}$ such that $x=q_{n_k}$ and $y=q_{n_k}+\gamma_k$; that is, $(x,y)$ is a gap of $L_\gamma$ and $x$ and $y$ are its endpoints in $L_\gamma$.
    \item[(5)] The set $L_\gamma$ does not contain any nontrivial interval. Consequently, it is nowhere dense, and if $(x,x+\delta)\cap L_\gamma\neq \emptyset$ for some $\delta>0$, then $(x,x+\delta)$ contains an infinite set of endpoints of $L_\gamma$.
    
\end{itemize}
\end{proposition}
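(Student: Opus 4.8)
The plan is to treat the five assertions in turn, leaning on two structural facts about the family $\{C^\gamma_i\}$: the gaps are \emph{pairwise disjoint open subintervals of $(0,1)$}, and each nonempty gap has length exactly $\gamma_i$. Disjointness is built into the construction, since $C^\gamma_i$ is chosen inside $(0,1)\setminus\bigcup_{j<i}C^\gamma_j$, and I would record it by a one-line induction. Compactness of $L_\gamma=[0,1]\setminus\bigcup_i C^\gamma_i$ is then immediate, as it is closed (complement of an open set) and bounded. For (1), disjointness gives $\mu(\bigcup_i C^\gamma_i)=\sum_i \mu(C^\gamma_i)\le \sum_i\gamma_i\le 1-\varepsilon_0$ by property (ii), so $\mu(L_\gamma)\ge \varepsilon_0$. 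For (2), every $C^\gamma_i\subset(0,1)$, so $0,1$ lie in no gap. For (3) I would exploit that the gaps are \emph{open} and disjoint: if an endpoint $q_{n_i}$ or $q_{n_i}+\gamma_i$ lay in some open gap $C^\gamma_j$, then a whole neighbourhood of it would too, forcing $C^\gamma_i\cap C^\gamma_j\neq\emptyset$, a contradiction; hence the endpoints survive in $L_\gamma$. Assertion (4) is a connectedness argument: a component $(x,y)$ of $(0,1)\setminus L_\gamma$ is covered by the disjoint open gaps, so by connectedness it lies in a single $C^\gamma_k$, and maximality (i.e.\ $x,y\in L_\gamma$) forces $(x,y)=C^\gamma_k$.

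The combinatorial heart is the first clause of (5), that $L_\gamma$ contains no nondegenerate interval, and this is the step I expect to be the main obstacle. Suppose for contradiction $(a,b)\subseteq L_\gamma$, that is, $(a,b)$ is disjoint from every gap. Since $\sum_i\gamma_i<\infty$ and $\gamma$ is decreasing, $\gamma_i\to 0$, so there is $i_0$ with $\gamma_i<(b-a)/2$ for all $i\ge i_0$. Fix once and for all a rational $q_m\in(a,(a+b)/2)$. For each $i\ge i_0$ the interval $(q_m,q_m+\gamma_i)$ lies inside $(a,b)$, hence avoids every earlier gap, so $m$ is an admissible index at stage $i$; consequently the greedy choice yields $n_i\le m$, and in particular $C^\gamma_i\neq\emptyset$. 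Thus for every $i\ge i_0$ the gap $C^\gamma_i$ has its left endpoint in the \emph{finite} set $\{q_1,\dots,q_m\}$. But two distinct gaps cannot share a left endpoint (they would overlap), so $i\mapsto q_{n_i}$ is injective on $\{i\ge i_0\}$, and an infinite set cannot inject into a finite one. Hence no such interval exists, and since $L_\gamma$ is closed with empty interior it is nowhere dense.

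For the final clause I would argue through gaps rather than points. Fix $U=(x,x+\delta)$ with $U\cap L_\gamma\neq\emptyset$. No single gap can contain $U$ (a gap misses $L_\gamma$), so every gap meeting $U$ has at least one endpoint inside $U$; moreover a point is an endpoint of at most two gaps. The key dichotomy is this: if only finitely many gaps meet $U$, then $U\setminus L_\gamma$ is a finite union of open intervals, whence $U\cap L_\gamma$ is a finite union of subintervals with empty interior, i.e.\ a finite set; otherwise infinitely many gaps meet $U$, each contributing an endpoint in $U$, and since each point serves at most two gaps this produces infinitely many distinct endpoints in $U$. The statement therefore reduces to excluding the first alternative, i.e.\ to showing $U\cap L_\gamma$ is infinite whenever it is nonempty — equivalently that $L_\gamma$ has no isolated points. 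This perfectness is the genuinely delicate point: an isolated point would require two gaps abutting at a common endpoint (or a gap abutting $0$ or $1$), and I would rule this out by showing that every point of $L_\gamma$ is a two-sided accumulation point of endpoints, applying the no-interval property to arbitrarily small one-sided neighbourhoods together with assertion (4). I expect the care needed here — in particular handling points that are themselves gap endpoints and the boundary points $0,1$ — to be the subtlest part of the argument.
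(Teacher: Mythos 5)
Your compactness claim, items (1)--(4), and the first clause of (5) are all correct, and in substance they follow the same route as the paper: the paper likewise derives (3) and (4) from disjointness of the gaps, and its proof that $L_\gamma$ contains no interval is the same greedy argument (a rational $q_{n_0}$ sitting deep inside a putative interval remains admissible at every late stage, hence must eventually be selected, producing a gap inside the interval). Your pigeonhole via injectivity of left endpoints is in fact a fully explicit version of a step the paper leaves implicit there.

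The genuine gap is exactly the step you defer at the end: you reduce the last clause of (5) to the claim that $L_\gamma$ has no isolated points, and you never prove it. This cannot be repaired, because the claim is false for some $\gamma\in\Gamma$: nothing in the minimality of the construction prevents a later gap from beginning exactly at the right endpoint of an earlier gap when that endpoint is rational. Concretely, fix an enumeration with $q_1=1/2$, $q_2=3/4$, and take $\gamma_1=1/4$, $\gamma_i=2^{-(i+2)}$ for $i\ge 2$; then $\gamma\in\Gamma$ (the sum is $3/8\le 1-\varepsilon_0$ and $q_1+\gamma_1<1$), and $C^\gamma_1=(1/2,3/4)$. At stage $2$ the index $n=1$ is inadmissible, while $n=2$ is admissible because $(3/4,3/4+\gamma_2)$ is disjoint from the \emph{open} interval $C^\gamma_1$; hence $C^\gamma_2=(3/4,3/4+\gamma_2)$. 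The two gaps abut, $3/4$ is an isolated point of $L_\gamma$, and a short interval $U$ around $3/4$ satisfies $U\cap L_\gamma=\{3/4\}\neq\emptyset$ while containing exactly one endpoint. So no argument along the lines you sketch (``every point of $L_\gamma$ is a two-sided accumulation point of endpoints'') can close the gap, because that assertion is simply not true.

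To be fair, this defect is inherited from the statement itself: the paper's entire proof of the last clause is the sentence that it ``follows from what we just proved and property (4)'', which overlooks the same abutting-gap configuration, so the clause as literally written fails in the example above. What is true --- and is all the paper actually uses later, where the relevant set meets $L_\gamma$ in a set of positive measure --- is the version with ``nonempty'' replaced by ``infinite'', and that version your own dichotomy already proves: its first branch shows that if only finitely many gaps met $(x,x+\delta)$ then $(x,x+\delta)\cap L_\gamma$ would be finite, so an infinite intersection forces infinitely many gaps to meet the interval, each contributing an endpoint inside it, and the at-most-two-gaps-per-point count yields infinitely many distinct endpoints. In short, your two-case analysis is the right tool; the unattainable part of your plan (and of the stated proposition) is only the isolated-point case.
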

\begin{proof}

Notice that the Lebesgue measure of $L_\gamma$ is greater than or equal to $1-\sum_{i=1}^\infty \gamma_i\geq\varepsilon_0$ for all possible $\gamma$, and the points $0$ and $1$ are always in $L_\gamma$, so $(1)$ and $(2)$ are clear. Also, the gaps $(q_{n_i},q_{n_i}+\gamma_i)$ and $(q_{n_j},q_{n_j}+\gamma_j)$ are disjoint for different $i,j\in\mathbb{N}$, from which $(3)$ follows as well. Moreover, this also implies that if $x,y\in L_\gamma$ with $x<y$ and $(x,y)\cap L_\gamma=\emptyset$, then there must exist $k\in \mathbb{N}$ such that $x=q_{n_k}$ and $y=q_{n_k}+\gamma_k$.  That is, $x$ and $y$ are the endpoints of $L_\gamma$, and we have $(4)$.

Finally, the set $L_\gamma$ is nowhere dense, since it contains no intervals. Indeed, suppose there is an interval $(x,x+\delta)\subset L_\gamma$ for some $\delta>0$ with $x+\delta<1$. The subinterval $(x,x+\delta/2)$ contains a rational number $q_{n_0}$. Since $(\gamma_i)_{i=1}^\infty$ is decreasing and converging to $0$, there must exist $i_0$ such that $\gamma_{i}<\delta/2$ for all $i\geq i_0$. Then, for all $i\geq i_0$, the natural number $n_0$ verifies that $(q_{n_0},q_{n_0}+\gamma_i)\subset L_\gamma$, and in particular
$$(q_{n_0},q_{n_0}+\gamma_i)\subset (0,1)\setminus\bigg(\bigcup_{j<i_0}C_j^\gamma\bigg). $$
Therefore, there must exist $i_1\geq i_0$ such that $n_0= \min\bigg\{n\in\mathbb{N}\colon (q_n,q_n+\gamma_{i_1})\subset(0,1)\setminus\bigg(\bigcup_{j<i}C_j^\gamma\bigg)\bigg\}$, which implies that $C^\gamma_{i_1}=(q_{n_0},q_{n_0}+\gamma_{i_1})$, a contradiction with the assumption that $(q_{n_0},q_{n_0}+\delta/2)\subset L_\gamma$. The last statement of (5) follows from what we just proved and property (4).

\end{proof}
\subsection{Lipschitz functions from $L_\gamma$ to $L_\xi$.}
We can see that $\Gamma$ is an uncountable set. Indeed, given countably many sequences verifying the three properties listed above, by a diagonal argument it is easy to construct a different sequence that still verifies these properties and is different from all given sequences. In fact, we are going to prove the following stronger result, which is the fundamental property of the sets $L_\gamma$ that will be used to construct the metric space without Lipschitz SRP:
\begin{theorem}
\label{nolipschitzmap}
Let $(\gamma^n)_n\subset \Gamma$ be a countable family of sequences in $\Gamma$ and let $K\geq 1$. Then, there exists a $\gamma^*\in \Gamma$ such that there is no $K$-Lipschitz function $F\colon {L_{\gamma^*}}\rightarrow L_{\gamma^n}$ with $F(0)=0$ and $F(1)=1$. 
\end{theorem}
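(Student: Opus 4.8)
The plan is to reduce the existence of a $K$-Lipschitz map $F\colon L_{\gamma^*}\to L_{\gamma^n}$ fixing the two endpoints to a rigid combinatorial relation between the gap structures of the two fat sets, and then to build $\gamma^*$ by a diagonal argument so that this relation fails for every $n$ at once. First I would replace the partially defined $F$ by a genuine self-map of the interval: extend $F$ to $\widetilde{F}\colon[0,1]\to[0,1]$ by interpolating affinely across each gap of $L_{\gamma^*}$. Each interpolating slope has modulus at most $K$, so $\widetilde{F}$ is again $K$-Lipschitz; and since $\widetilde{F}(0)=0$, $\widetilde{F}(1)=1$, the intermediate value theorem forces $\widetilde{F}$ to be onto $[0,1]$. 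This extension is the only analytic input, and it converts the problem into one about how $\widetilde{F}$ must ``cross'' the gaps of the target.

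The heart of the argument is the following crossing lemma. Fix any gap $g=(a,b)$ of $L_{\gamma^n}$. By surjectivity some $x_0$ satisfies $\widetilde{F}(x_0)=(a+b)/2\in g$; since $F$ maps $L_{\gamma^*}$ into $L_{\gamma^n}$, the point $x_0$ cannot lie in $L_{\gamma^*}$, so it lies in a gap $h$ of $L_{\gamma^*}$. On $h$ the map $\widetilde{F}$ is affine with both endpoint values in $L_{\gamma^n}$, and the smallest interval with endpoints in $L_{\gamma^n}$ that meets $g$ must contain all of $g$ (no point of $L_{\gamma^n}$ lies inside the gap $g$). Hence $g\subseteq \widetilde{F}(h)$ and, comparing lengths, $|h|\ge |g|/K$. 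Consequently every gap of $L_{\gamma^n}$ is swallowed by the affine image of a single gap of $L_{\gamma^*}$ that is at least $1/K$ times as wide, and if several target gaps of length $\ge s$ are so far apart that no interval of length $\le K\gamma^*_1$ contains two of them, then they are crossed by \emph{distinct} gaps of $L_{\gamma^*}$, each of width $\ge s/K$. This yields a quantitative lower bound on the number of large, well-separated gaps that $L_{\gamma^*}$ must possess in order for any such $F$ to exist.

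With this tool in hand I would construct $\gamma^*$ term by term, processing the sequences $\gamma^n$ one at a time and reserving for each $n$ a scale $s_n$. At stage $n$ the goal is to arrange that $L_{\gamma^n}$ has strictly more mutually separated gaps of length $\ge s_n$ than $L_{\gamma^*}$ can have gaps of width $\ge s_n/K$, which by the crossing lemma rules out every $K$-Lipschitz $F\colon L_{\gamma^*}\to L_{\gamma^n}$ with $F(0)=0$, $F(1)=1$. Here the explicit greedy placement of gaps at rational points enters decisively: the structural facts in Proposition \ref{propertieslgamma}, in particular that each $L_\gamma$ is nowhere dense with endpoints at prescribed rationals and that every subinterval meeting $L_\gamma$ contains infinitely many endpoints, let me locate and separate the relevant target gaps of each $L_{\gamma^n}$ and choose the next block of terms of $\gamma^*$ to be too sparse at the matching scale. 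Throughout, I must keep $\gamma^*$ decreasing, strictly positive, with $\sum_i\gamma^*_i\le 1-\varepsilon_0$ and $q_1+\gamma^*_1<1$, so that $\gamma^*\in\Gamma$; since only a negligible amount of total length is committed at each stage, there is ample room to satisfy these constraints while diagonalizing against all $n$.

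The main obstacle is unavoidable and dictates the whole strategy: \emph{every} estimate based on length, Lebesgue measure, or total variation collapses to the single inequality $\sum_i\gamma^n_i\le K\sum_i\gamma^*_i$ (a wide affine piece can cross a long contiguous block of small target gaps for free), and this inequality is vacuous precisely when $\sum_i\gamma^n_i$ is small, i.e.\ when $L_{\gamma^n}$ has nearly full measure. Thus the crux is to defeat these near-full-measure targets, for which no global quantity suffices; one is forced to use the finer data of how many \emph{large and mutually separated} gaps appear at a carefully chosen scale, together with the positional information supplied by the greedy construction. Getting the separation of the target gaps to hold robustly for every $\gamma^n$, while simultaneously keeping $\gamma^*$ monotone and summable, is the delicate bookkeeping that makes Sections~3 technical; the crossing lemma itself is comparatively soft.
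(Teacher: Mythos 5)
Your ``crossing lemma'' is correct, and it is essentially the paper's own machinery in different clothing: it is the notion of one gap \emph{jumping over} another, together with Lemma~\ref{gapjumpsovergap}, Lemma~\ref{onlylonggapscanjump} and Proposition~\ref{propertiesdilations} (your affine-extension/IVT derivation even avoids the monotone rearrangement of Proposition~\ref{wlogFisnondecreasing}, which is a nice simplification). The genuine gap is in the global pigeonhole you build on top of it. You insist on target gaps that are pairwise not contained in any interval of length $K\gamma^*_1$, and this separation threshold is frozen at the first stage: any family of gaps of $L_{\gamma^n}$ with that separation property has at most $R$ members, where $R$ is a finite constant of order $2/(K\gamma^*_1)$. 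Now look at the certificate your method must produce: for each $n$ there should exist a scale $s$ with strictly more separated target gaps of length $\geq s$ than source gaps of width $\geq s/K$. Whatever $\gamma^*\in\Gamma$ your process outputs, it has infinitely many positive terms, so $\gamma^*_{R+1}>0$; take any member $\xi$ of the given family with $\xi_1<K\gamma^*_{R+1}$ (the family is arbitrary, so it may satisfy $\inf_n\gamma^n_1=0$, forcing such members to exist). Then \emph{every} admissible scale satisfies $s\leq\xi_1<K\gamma^*_{R+1}$, hence $\gamma^*_1,\dots,\gamma^*_{R+1}$ all have width $\geq s/K$ and the right-hand count is at least $R+1$, while the left-hand count is capped at $R$ by the geometry of $[0,1]$. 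So the inequality you need fails at every scale, for every such target, no matter how the stages are ordered or the bookkeeping arranged. The failure is structural, not ``delicate bookkeeping'' as your last paragraph suggests; the theorem is still true for these targets, but your criterion can never certify it.

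What is missing --- and what explains the shape of the paper's proof --- is using the crossing lemma at full strength: two target gaps crossed by the \emph{same} source gap $C^{\gamma^*}_j$ must lie in a common interval of length $K\gamma^*_j$, not merely $K\gamma^*_1$, so the separation demanded of target gaps may shrink with $j$, and summably so. Since which source gap crosses which target gap depends on the unknown map $F$, the target gaps cannot be fixed in advance as one uniformly separated family; the paper instead prepares, for every ordering $\sigma=(j_1,\dots,j_i)$ of the committed indices, a nested chain of gaps of $L_{\gamma^{i+1}}$ in which each gap escapes the sweeping $\mathcal{D}_{K\gamma^*_{j_k}}(\cdot)$ of its predecessor; the decay condition $\gamma^*_j<2^{-(j+1)}K^{-1}\varepsilon_0$ keeps the total measure of all sweepings below $\varepsilon_0\leq\mu(L_{\gamma^{i+1}})$, so the chain can always be continued \emph{no matter how small the target's gaps are} --- exactly the case your scheme cannot handle. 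Only then is $\gamma^*_{i+1}$ chosen, smaller than the last chain gap, so that indices $>i$ can never jump any chain gap. Given a hypothetical $F$, one follows the chain dictated by which source gap jumps each successive target gap; after $i+1$ steps an index in $\{1,\dots,i\}$ must repeat, and the sweeping tailored to that particular index yields $K\gamma^*_j>K\gamma^*_j$, a contradiction. Your proposal would have to be rebuilt around this order-adaptive, non-uniform scheme; the single-scale, uniformly separated pigeonhole cannot be repaired.
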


The next elementary proposition allows us to assume without loss of generality that the Lipschitz maps we consider are non-decreasing.
\begin{proposition}
\label{wlogFisnondecreasing}
Let $A,B\subset [0,1]$ be nonempty subsets with $0,1\in A\cap B$, and let $F\colon A\rightarrow B$ be a Lipschitz function such that $F(0)=0$ and $F(1)=1$. Then there exists a non-decreasing Lipschitz function $\widehat{F}\colon A\rightarrow B$ with $\|\widehat{F}\|_\text{Lip}\leq\|F\|_\text{Lip}$ such that $\widehat{F}(0)=0$ and $\widehat{F}(1)=1$.
\end{proposition}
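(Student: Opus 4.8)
The plan is to construct $\widehat{F}$ from $F$ by a ``running maximum'' (monotone envelope) procedure, which is the standard trick for turning a Lipschitz function fixing the endpoints $0$ and $1$ into a non-decreasing one without increasing the Lipschitz constant. Concretely, I would define
\[
\widehat{F}(x) = \sup\{F(t) : t \in A,\ t \leq x\}
\]
for each $x \in A$. Since $0 \in A$ and $F(0)=0$, the supremum is taken over a nonempty set; and since $A \subset [0,1]$ with $F$ bounded (indeed $F(A)\subset B\subset[0,1]$), the supremum is finite, lying in $[0,1]$. The monotonicity of $\widehat{F}$ is immediate from the definition, because enlarging $x$ enlarges the set over which we take the supremum. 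The endpoint conditions are easy: $\widehat{F}(0)=F(0)=0$ because the only $t\le 0$ in $A$ is $0$ itself, and $\widehat{F}(1)=1$ because $F(1)=1$ is the largest possible value, so the supremum over all of $A$ equals $1$.

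Next I would verify the Lipschitz estimate $\|\widehat{F}\|_\text{Lip} \leq \|F\|_\text{Lip}=:K$. Take $x,y\in A$ with $x<y$. Clearly $\widehat{F}(x)\le\widehat{F}(y)$, so it suffices to bound $\widehat{F}(y)-\widehat{F}(x)$ by $K(y-x)$. Given $\eta>0$, choose $t\le y$ in $A$ with $F(t)>\widehat{F}(y)-\eta$. If $t\le x$ then $F(t)\le\widehat{F}(x)$ and the difference is at most $\eta$; otherwise $x<t\le y$, and then $\widehat{F}(y)-\widehat{F}(x)\le F(t)+\eta-\widehat{F}(x)\le F(t)-F(x)+\eta\le K(t-x)+\eta\le K(y-x)+\eta$, where I used $F(x)\le\widehat{F}(x)$ for the second inequality. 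Letting $\eta\to 0$ gives the bound in both cases.

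The one genuine subtlety — and the step I expect to require the most care — is confirming that $\widehat{F}$ still takes its values in $B$, since the whole point of Proposition~\ref{wlogFisnondecreasing} is to preserve the target set so that the monotone map can be fed back into the argument for Theorem~\ref{nolipschitzmap}. A plain running supremum need not land in $B$: the value $\widehat{F}(x)$ is a supremum of elements of $B$, which lies in $\overline{B}$ but perhaps not in $B$ itself. I would handle this by noting that in our intended application $B=L_{\gamma^n}$ is \emph{closed} (Proposition~\ref{propertieslgamma}), so $\overline{B}=B$ and the issue evaporates; to keep the proposition self-contained I would therefore state it for closed $B$, or more carefully observe that whenever $\widehat{F}(x)\notin B$ one can replace $\widehat{F}(x)$ by $\sup\{b\in B : b\le\widehat{F}(x)\}$, which is the left endpoint of a gap of $B$ and hence lies in $B$ by property~(3)--(4) of Proposition~\ref{propertieslgamma}, while this downward correction only decreases values and so cannot increase the Lipschitz constant nor spoil monotonicity or the endpoint normalisation. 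Either route closes the argument.
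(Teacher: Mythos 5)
Your core construction is exactly the paper's: the paper also defines $\widehat{F}(x)=\max_{y\leq x}F(y)$ and verifies the Lipschitz bound by the same comparison (if the maximum for $\widehat F(p)$ is attained at $z>q$, compare with $F(z)-F(q)$), so in substance you have reproduced the intended proof, and your $\eta$-argument for the Lipschitz estimate is correct. You have also put your finger on a real gap that the paper glosses over: writing $\max$ presupposes the supremum is attained, which is what guarantees $\widehat F(x)=F(z)\in B$; this attainment holds because in the application $A=L_{\gamma^*}$ is compact (and $B=L_{\gamma^n}$ is closed), but it is not automatic for the arbitrary sets $A,B$ of the statement. Your ``route (a)'' --- observing that closedness of $B$ (equivalently, compactness of $A$, which makes the sup a max) suffices and holds in the only place the proposition is used --- is a valid repair.

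However, your ``route (b)'' does not work, so the closing claim that \emph{either} route finishes the argument is false. The proposed correction $\sup\{b\in B:\ b\leq \widehat F(x)\}$ is vacuous: $\widehat F(x)$ is by definition a supremum of the values $F(t)$, $t\leq x$, all of which lie in $B$ and are $\leq\widehat F(x)$; hence
\[
\widehat F(x)\;\leq\;\sup\{b\in B:\ b\leq \widehat F(x)\}\;\leq\;\widehat F(x),
\]
so the correction returns $\widehat F(x)$ itself, and in the problematic case ($\widehat F(x)\notin B$, i.e.\ $\widehat F(x)$ is a limit of points of $B$ from below) it is still not in $B$ --- in particular it is not a ``left endpoint of a gap.'' Moreover, invoking properties (3)--(4) of Proposition \ref{propertieslgamma} already presupposes $B=L_\gamma$, which is closed, so in that situation the issue never arises and route (b) collapses into route (a). (Note also that a genuine downward correction to the largest element of $B$ below $\widehat F(x)$, when it exists, can move distinct values by very different amounts and so need not preserve the Lipschitz constant; no variant of this idea salvages route (b).) The clean statement is: for $B$ closed, or for $A$ compact, the construction works verbatim, and that is all the paper needs.
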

\begin{proof}
Define $\widehat{F}\colon A\rightarrow B$ by
$$ \widehat{F}(x)=\max_{y\leq x} F(y).$$

Clearly $\widehat{F}$ is non-decreasing with $F\leq \widehat{F}$, $\widehat{F}(0)=0$, and $\widehat{F}(1)=1$. Given $p,q\in A$ with $q\leq p$, we have that $\widehat{F}(p)=F(z)$ for some $z\leq p$. If $z\leq q$ we necessarily have that $\widehat{F}(p)=\widehat{F}(q)$. Otherwise, we obtain:

$$\frac{\widehat{F}(p)-\widehat{F}(q)}{p-q}=\frac{F(z)-\widehat{F}(q)}{p-q}\leq \frac{F(z)-F(q)}{z-q}\leq \|F\|_\text{Lip},$$
which implies that $\|\widehat{F}\|_\text{Lip}\leq \|F\|_\text{Lip}$.
\end{proof}

Let us first give some definitions and prove some technical results which make the construction of $\gamma^*$ simpler: Let $\gamma,\xi\in \Gamma$ be two different sequences, and suppose there is a Lipschitz function $F\colon {L_{\gamma}}\rightarrow L_{\xi}$ such that $F(0)=0$ and $F(1)=1$. We say that a gap $C^\gamma_i=(q_{n_i},q_{n_i}+\gamma_i)$ \textit{jumps over a gap $C^\xi_j=(q_{n_j},q_{n_j}+\xi_j)$ with respect to $F$} if $F(q_{n_i})<q_{n_j}$ and $F(q_{n_i}+\gamma_i)>q_{n_j}+\xi_j$ (see Figure \ref{examplegapjump}).

The first lemma we prove says intuitively that if we have a monotonically nondecreasing Lipschitz function $F$ from $L_\gamma$ to $L_\xi$ that fixes $0$ and $1$, then every gap in $L_\xi$ must be jumped by a gap in $L_\gamma$ with respect to $F$. Although this result is fairly intuitive, we include the (simple) proof for completeness.

\begin{lemma}
\label{gapjumpsovergap}
Let $\gamma,\xi\in \Gamma$, with $L_\gamma$ and $L_\xi$ being the corresponding subsets. Suppose that there is a non-decreasing Lipschitz function $F\colon {L_{\gamma}}\rightarrow L_{\xi}$ such that $F(0)=0$ and $F(1)=1$. Let $C^\xi_j$ be a gap in $L_\xi$. Then there exists a gap in $L_\gamma$ that jumps over $C^\xi_j$ with respect to $F$.
\end{lemma}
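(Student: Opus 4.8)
The plan is to pin down the jumping gap by a one-dimensional separation argument, exploiting that $F$ is non-decreasing (which the hypothesis grants, and which Proposition~\ref{wlogFisnondecreasing} secures in general) together with the fact that $F$ takes values in $L_\xi$, a set that avoids the open gap $(q_{n_j},q_{n_j}+\xi_j)$ entirely.

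Write $a=q_{n_j}$ and $b=q_{n_j}+\xi_j$ for the endpoints of $C^\xi_j$, both of which lie in $L_\xi$ by Proposition~\ref{propertieslgamma}(3). I would split $L_\gamma$ into $U=\{x\in L_\gamma\colon F(x)\le a\}$ and $V=\{x\in L_\gamma\colon F(x)\ge b\}$. Since $F(L_\gamma)\subset L_\xi$ and no point of $L_\xi$ lies strictly between $a$ and $b$, every point of $L_\gamma$ falls into $U$ or $V$, so $U\cup V=L_\gamma$; the two sets are closed and disjoint, and $0\in U$, $1\in V$ because $F(0)=0$ and $F(1)=1$. Monotonicity of $F$ then forces $U$ to be an initial segment and $V$ a final segment of $L_\gamma$: with $x^*=\max U$ and $y^*=\min V$ one has $x^*<y^*$ and $(x^*,y^*)\cap L_\gamma=\emptyset$. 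By Proposition~\ref{propertieslgamma}(4) this means $(x^*,y^*)$ is a gap $C^\gamma_i$ of $L_\gamma$ with $x^*=q_{n_i}$ and $y^*=q_{n_i}+\gamma_i$, and from $x^*\in U$, $y^*\in V$ we read off $F(q_{n_i})\le q_{n_j}$ and $F(q_{n_i}+\gamma_i)\ge q_{n_j}+\xi_j$; that is, the image of $C^\gamma_i$ straddles $C^\xi_j$.

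The step I expect to be the main obstacle is sharpening these to the strict inequalities required by the definition of jumping, i.e. excluding the degenerate situation in which $F$ sends $q_{n_i}$ exactly to $q_{n_j}$ or $q_{n_i}+\gamma_i$ exactly to $q_{n_j}+\xi_j$ (the extreme case being $L_\gamma=L_\xi$ with $F$ locally the identity, where no gap strictly jumps). To rule this out I would invoke that $\gamma$ and $\xi$ are distinct sequences and analyze the behaviour of $F$ near $x^*$ and $y^*$ using the accumulation of endpoints guaranteed by Proposition~\ref{propertieslgamma}(5) together with the continuity of $F$. The separation scheme producing the gap is essentially forced and routine; all the genuine work sits in this boundary analysis, so that is where I would concentrate the detailed estimates.
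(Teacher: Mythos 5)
Your separation argument is, in substance, the paper's own proof: the paper defines $p_-=\max\{F(x)\colon x\in L_\gamma,~F(x)\leq q_{n_j}\}$ and $p_+=\min\{F(y)\colon y\in L_\gamma,~F(y)\geq q_{n_j}+\xi_j\}$, then takes $x_-=\max\{x\colon F(x)=p_-\}$ and $y_+=\min\{y\colon F(y)=p_+\}$; by monotonicity these coincide with your $x^*=\max U$ and $y^*=\min V$, and the paper likewise concludes via Proposition~\ref{propertieslgamma}(4) that $(x_-,y_+)$ is a gap straddling $C^\xi_j$. So your proposal is correct as far as it goes, and it goes exactly as far as the paper does.

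The one place you diverge is the strictness question, where you are in fact more careful than the paper: the paper's proof also yields only $F(x_-)\leq q_{n_j}$ and $F(y_+)\geq q_{n_j}+\xi_j$ and then simply declares that the gap jumps over $C^\xi_j$, with no boundary analysis at all. Do not sink work into the repair you sketch, because it cannot succeed: the lemma as stated does not assume $\gamma\neq\xi$, and even for distinct sequences nothing in the hypotheses prevents $F$ from sending $x^*$ exactly to $q_{n_j}$ (note that \emph{every} $L_\gamma$ and $L_\xi$ share the left endpoint $q_1$ of their first gaps, so endpoint-to-endpoint collisions are not exotic). Moreover, the straddling gap $(x^*,y^*)$ is the \emph{unique} candidate for jumping --- any gap $(c,d)$ with $F(c)\leq q_{n_j}$ and $F(d)\geq q_{n_j}+\xi_j$ must have $c=x^*$ and $d=y^*$ --- so when such a collision occurs, no gap strictly jumps and the strict statement fails outright. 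The honest fix is to weaken the definition of ``jumps over'' to the non-strict inequalities your argument delivers; this costs nothing downstream, since the conclusion of Lemma~\ref{onlylonggapscanjump} is already the non-strict bound $K\gamma_{i_0}\geq \max_{j\neq j'}|q_{n_j}+\xi_j-q_{n_{j'}}|$ (its proof goes through verbatim with non-strict hypotheses), and Proposition~\ref{propertiesdilations}(2) retains its strict conclusion $K\gamma_{i_0}>r$ because there the maximum exceeds $r$ strictly.
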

\begin{proof}
Let $C^\xi_j=(q_{n_j},q_{n_j}+\xi_j)$. Consider the points:
\begin{align*}
    p_-&=\max\{F(x)\in L_\xi\colon~x\in L_\gamma,~F(x)\leq q_{n_j}\},\\
    p_+&=\min\{F(y)\in L_\xi\colon~y\in L_\gamma,~F(y)\geq q_{n_j}+\xi_{n_j}\}. 
\end{align*}
These minimum and maximum values always exist since we have that $F(0)=0$ and $F(1)=1$ and $L_\gamma$ is compact. Hence, we can find 
\begin{align*}
    x_-&=\max\{x\in L_\gamma\colon~F(x)=p_-\},\\
    y_+&=\min\{y\in L_\gamma\colon~F(y)=p_+\}.
\end{align*}

Since $F$ is non-decreasing and $p_-<p_+$, we have that $x_-<y_+$ and $(x_-,y_+)\cap L_\gamma=\emptyset$, so by Proposition \ref{propertieslgamma} $(3)$ there exists a $k\in \mathbb{N}$ such that $x_-=q_{n_k}$ and $y_+=q_{n_k}+\gamma_{k}$. The gap $C^\gamma_k=(q_{n_k},q_{n_k}+\gamma_k)$ jumps over $C^\xi_j$ with respect to $F$.
\end{proof}
\tikzset{every picture/.style={line width=0.75pt}} 
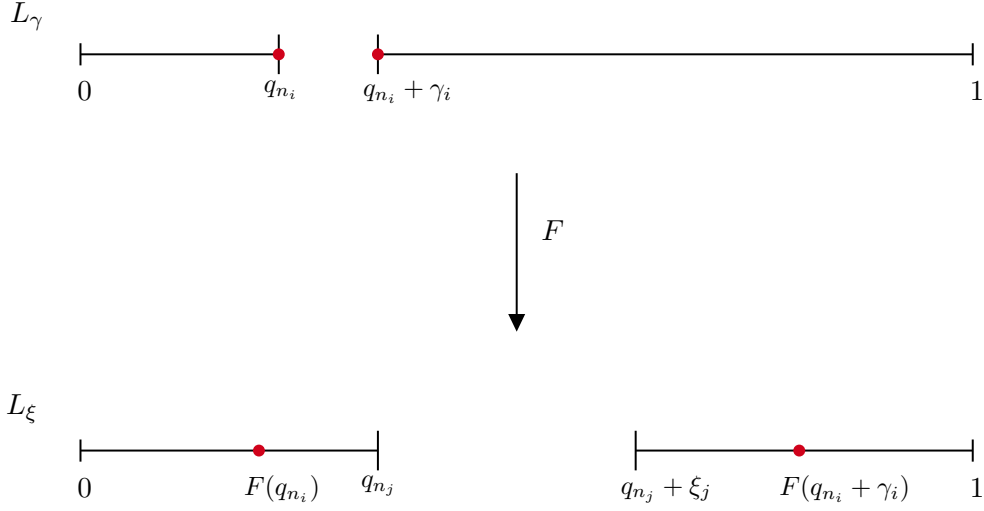
\begin{figure}
\label{examplegapjump}
\begin{tikzpicture}[x=0.75pt,y=0.75pt,yscale=-1,xscale=1]

\draw    (250,150) -- (367.5,150) -- (420,150) -- (550,150) ;
\draw [shift={(550,150)}, rotate = 180] [color={rgb, 255:red, 0; green, 0; blue, 0 }  ][line width=0.75]    (0,5.59) -- (0,-5.59)   ;
\draw    (100,150) -- (200,150) ;
\draw [shift={(100,150)}, rotate = 180] [color={rgb, 255:red, 0; green, 0; blue, 0 }  ][line width=0.75]    (0,5.59) -- (0,-5.59)   ;
\draw    (200,140) -- (200,160) ;
\draw    (250,340) -- (250,360) ;
\draw    (380,340) -- (380,360) ;
\draw    (380,350) -- (550,350) ;
\draw [shift={(550,350)}, rotate = 180] [color={rgb, 255:red, 0; green, 0; blue, 0 }  ][line width=0.75]    (0,5.59) -- (0,-5.59)   ;
\draw    (100,350) -- (250,350) ;
\draw [shift={(100,350)}, rotate = 180] [color={rgb, 255:red, 0; green, 0; blue, 0 }  ][line width=0.75]    (0,5.59) -- (0,-5.59)   ;
\draw    (190,350) ;
\draw    (320,210) -- (320,287) ;
\draw [shift={(320,290)}, rotate = 270] [fill={rgb, 255:red, 0; green, 0; blue, 0 }  ][line width=0.08]  [draw opacity=0] (8.93,-4.29) -- (0,0) -- (8.93,4.29) -- cycle    ;
\draw    (250,140) -- (250,160) ;
\draw  [color={rgb, 255:red, 208; green, 2; blue, 27 }  ,draw opacity=1 ][fill={rgb, 255:red, 208; green, 2; blue, 27 }  ,fill opacity=1 ] (197.5,150) .. controls (197.5,148.62) and (198.62,147.5) .. (200,147.5) .. controls (201.38,147.5) and (202.5,148.62) .. (202.5,150) .. controls (202.5,151.38) and (201.38,152.5) .. (200,152.5) .. controls (198.62,152.5) and (197.5,151.38) .. (197.5,150) -- cycle ;
\draw  [color={rgb, 255:red, 208; green, 2; blue, 27 }  ,draw opacity=1 ][fill={rgb, 255:red, 208; green, 2; blue, 27 }  ,fill opacity=1 ] (247.5,150) .. controls (247.5,148.62) and (248.62,147.5) .. (250,147.5) .. controls (251.38,147.5) and (252.5,148.62) .. (252.5,150) .. controls (252.5,151.38) and (251.38,152.5) .. (250,152.5) .. controls (248.62,152.5) and (247.5,151.38) .. (247.5,150) -- cycle ;
\draw  [color={rgb, 255:red, 208; green, 2; blue, 27 }  ,draw opacity=1 ][fill={rgb, 255:red, 208; green, 2; blue, 27 }  ,fill opacity=1 ] (187.5,350) .. controls (187.5,348.62) and (188.62,347.5) .. (190,347.5) .. controls (191.38,347.5) and (192.5,348.62) .. (192.5,350) .. controls (192.5,351.38) and (191.38,352.5) .. (190,352.5) .. controls (188.62,352.5) and (187.5,351.38) .. (187.5,350) -- cycle ;
\draw  [color={rgb, 255:red, 208; green, 2; blue, 27 }  ,draw opacity=1 ][fill={rgb, 255:red, 208; green, 2; blue, 27 }  ,fill opacity=1 ] (460,350) .. controls (460,348.62) and (461.12,347.5) .. (462.5,347.5) .. controls (463.88,347.5) and (465,348.62) .. (465,350) .. controls (465,351.38) and (463.88,352.5) .. (462.5,352.5) .. controls (461.12,352.5) and (460,351.38) .. (460,350) -- cycle ;

\draw (63,122) node [anchor=north west][inner sep=0.75pt]   [align=left] {$\displaystyle L_{\gamma }$};
\draw (61,320) node [anchor=north west][inner sep=0.75pt]   [align=left] {$\displaystyle L_{\xi }$};
\draw (191,162) node [anchor=north west][inner sep=0.75pt]  [font=\small] [align=left] {$\displaystyle q_{n_{i}}$};
\draw (241,162) node [anchor=north west][inner sep=0.75pt]  [font=\small] [align=left] {$\displaystyle q_{n_{i}} +\gamma _{i}$};
\draw (240,361) node [anchor=north west][inner sep=0.75pt]  [font=\small] [align=left] {$\displaystyle q_{n_{j}}$};
\draw (371,361) node [anchor=north west][inner sep=0.75pt]  [font=\small] [align=left] {$\displaystyle q_{n_{j}} +\xi _{j}$};
\draw (181,361) node [anchor=north west][inner sep=0.75pt]  [font=\small] [align=left] {$\displaystyle F( q_{n_{i}})$};
\draw (451,361) node [anchor=north west][inner sep=0.75pt]  [font=\small] [align=left] {$\displaystyle F( q_{n_{i}} +\gamma _{i})$};
\draw (331,232.4) node [anchor=north west][inner sep=0.75pt]    {$F$};
\draw (97,162.4) node [anchor=north west][inner sep=0.75pt]    {$0$};
\draw (97,362.4) node [anchor=north west][inner sep=0.75pt]    {$0$};
\draw (547,162.4) node [anchor=north west][inner sep=0.75pt]    {$1$};
\draw (547,362.4) node [anchor=north west][inner sep=0.75pt]    {$1$};

\end{tikzpicture}

\caption{The gap $C^\gamma_i$ jumps over $C^\xi_j$ with respect to $F$.}

\end{figure}

The second lemma we prove can also be easily deduced and is intuitively clear: It shows that if a gap $C^\gamma_i$ in $L_\gamma$ jumps over several gaps in $L_\xi$ simultaneously with respect to a Lipschitz  function $F$, then $\gamma_i$ (the length of $C^\gamma_i$) must be bigger than the length of the smallest subinterval of $[0,1]$ that contains all the gaps $C^\gamma_i$ jumps over, divided by the Lipschitz constant of $F$. 
\begin{lemma}
\label{onlylonggapscanjump}
Let $K>1$, and let $\gamma,\xi\in \Gamma$, with $L_\gamma$ and $L_\xi$ being the corresponding subsets. Suppose that there is a non-decreasing Lipschitz function $F\colon {L_{\gamma}}\rightarrow L_{\xi}$ with $\|F\|_\text{Lip}= K$ such that $F(0)=0$ and $F(1)=1$. Let $C^\gamma_{i_0}$ be a gap in $L_\gamma$, and let $\big(C^\xi_{j}\big)_{j=1}^k$ be a finite collections of different gaps in $L_\xi$. If $C^\gamma_{i_0}$ jumps over $C^\xi_j$ with respect to $F$ for all $1\leq j\leq k$, then 

$$K\gamma_{i_0}\geq \max_{j\neq j'} |q_{n_j}+\xi_j-q_{n_{j'}}|.$$
\end{lemma}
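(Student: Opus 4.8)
The plan is to exploit the $K$-Lipschitz condition directly together with the definition of ``jumping over a gap.'' Suppose $C^\gamma_{i_0}=(q_{n_{i_0}},q_{n_{i_0}}+\gamma_{i_0})$ jumps over each of the gaps $C^\xi_j=(q_{n_j},q_{n_j}+\xi_j)$ for $1\le j\le k$. By the definition of jumping, we have $F(q_{n_{i_0}})<q_{n_j}$ and $F(q_{n_{i_0}}+\gamma_{i_0})>q_{n_j}+\xi_j$ for every $j$. First I would record the two extreme endpoints among the jumped gaps: since all the $C^\xi_j$ are jumped by the \emph{same} source gap, and $F$ is non-decreasing, the image $\big(F(q_{n_{i_0}}),F(q_{n_{i_0}}+\gamma_{i_0})\big)$ must strictly contain every interval $(q_{n_j},q_{n_j}+\xi_j)$. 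In particular, taking the minimum left endpoint and the maximum right endpoint over $j$, the open interval with endpoints $F(q_{n_{i_0}})$ and $F(q_{n_{i_0}}+\gamma_{i_0})$ contains all the points $q_{n_j}$ and $q_{n_j}+\xi_j$.

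The key step is then a one-line estimate using the Lipschitz bound. For any two indices $j,j'$, both the points $q_{n_j}+\xi_j$ and $q_{n_{j'}}$ (and likewise any pairing of endpoints across the jumped gaps) lie in the open interval bounded by $F(q_{n_{i_0}})$ and $F(q_{n_{i_0}}+\gamma_{i_0})$. Hence their separation is bounded by the length of that image interval:
\begin{align*}
|q_{n_j}+\xi_j-q_{n_{j'}}| &\le \big|F(q_{n_{i_0}}+\gamma_{i_0})-F(q_{n_{i_0}})\big| \\
&\le \|F\|_\text{Lip}\cdot\big|(q_{n_{i_0}}+\gamma_{i_0})-q_{n_{i_0}}\big| = K\gamma_{i_0}.
\end{align*}
Taking the maximum over all $j\ne j'$ on the left yields exactly $K\gamma_{i_0}\ge \max_{j\ne j'}|q_{n_j}+\xi_j-q_{n_{j'}}|$, which is the desired inequality.

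The only point requiring a little care is verifying that whatever pairing realizes the maximum on the right-hand side indeed has both of its points trapped inside the image interval $\big(F(q_{n_{i_0}}),F(q_{n_{i_0}}+\gamma_{i_0})\big)$. Since \emph{every} endpoint of \emph{every} jumped gap satisfies $F(q_{n_{i_0}})<q_{n_j}<q_{n_j}+\xi_j<F(q_{n_{i_0}}+\gamma_{i_0})$ (the outer strict inequalities coming from the jumping condition and the inner one from $\xi_j>0$), all candidate points for the maximum lie strictly between the two image values. Thus any difference $|q_{n_j}+\xi_j-q_{n_{j'}}|$ is dominated by the full length $F(q_{n_{i_0}}+\gamma_{i_0})-F(q_{n_{i_0}})$, and the Lipschitz estimate applies uniformly. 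I do not anticipate a genuine obstacle here; the statement is, as the authors note, intuitively clear, and the entire content is the interval-containment observation followed by the Lipschitz bound. The main thing to get right is simply being explicit that it is one and the same source gap $C^\gamma_{i_0}$ jumping over all the target gaps, which is what forces every target endpoint into a single image interval of controlled length.
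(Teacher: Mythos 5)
Your proof is correct and follows essentially the same route as the paper: both use the jumping condition to trap every endpoint of every jumped gap strictly inside the image interval $\big(F(q_{n_{i_0}}),F(q_{n_{i_0}}+\gamma_{i_0})\big)$, so that any difference $|q_{n_j}+\xi_j-q_{n_{j'}}|$ is bounded by $F(q_{n_{i_0}}+\gamma_{i_0})-F(q_{n_{i_0}})$, and then apply the $K$-Lipschitz estimate to this interval of source length $\gamma_{i_0}$. The paper states this more tersely, but the content is identical.
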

\begin{proof}
Since $C^\gamma_{i_0}$ jumps over $C^\xi_j$ for all $1\leq j\leq k$, we have that $F(q_{n_{i_0}})<q_{n_j}$ and $F(q_{n_{i_0}}+\gamma_{i_0})> q_{n_j}+\xi_j$. Hence, we have that 

$$F(q_{n_{i_0}}+\gamma_{i_0})-F(q_{n_{i_0}})> \max_{j\neq j'} |q_{n_j}+\xi_j-q_{n_{j'}}|.$$
Applying that $F$ is $K$-Lipschitz and $q_{n_{i_0}}+\gamma_{i_0}-q_{n_{i_0}}=\gamma_{i_0}$ we obtain the result.
\end{proof}

We are going to define also a particular kind of intervals which we will be using in the construction of $\gamma^*$. Let $(a,b)\subset [0,1]$ be a nontrivial open interval, and let $r>0$. We define the \textit{sweeping of $[a,b]$ by $r$} as the interval

$$ \mathcal{D}_r(a,b)=(b-r,a+r).$$

Notice that if $r<b-a$, then $\mathcal{D}_r(a,b)=\emptyset$. We can prove some direct properties about this sweeping:

\begin{proposition}
\label{propertiesdilations}
The following properties are verified:

\begin{itemize}
    \item[(1)] Let $(a,b)\subset [0,1]$ be a nontrivial open interval, and let $r>0$. Then $\mu\big(\mathcal{D}_r(a,b)\big)<2r$.
    \item[(2)] Let $r>0$, and $\gamma,\xi\in \Gamma$. Let $F\colon L_\gamma\rightarrow L_\xi$ be a $K$-Lipschitz map. Suppose that there is a gap $C^\gamma_{i_0}$ in $L_\gamma$ that jumps over two gaps $C^\xi_{j_1},C^\xi_{j_2}$ in $L_\xi$ with respect to $F$. Moreover, suppose that $C^\xi_{j_2}\nsubseteq \mathcal{D}_r(C^\xi_{j_1})$. Then $K\gamma_{i_0}>r$. 
\end{itemize}

\end{proposition}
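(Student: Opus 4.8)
For part (1), the plan is a direct length computation. By definition $\mathcal{D}_r(a,b)=(b-r,a+r)$, so if this interval is nonempty its Lebesgue measure equals $(a+r)-(b-r)=2r-(b-a)$. Since $(a,b)$ is a nontrivial interval we have $b-a>0$, whence $\mu\big(\mathcal{D}_r(a,b)\big)=2r-(b-a)<2r$; and if the interval is empty its measure is $0<2r$. This settles (1) immediately.

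For part (2), I would first unpack the sweeping applied to the gap $C^\xi_{j_1}=(q_{n_{j_1}},q_{n_{j_1}}+\xi_{j_1})$, which gives
$$\mathcal{D}_r\big(C^\xi_{j_1}\big)=\big(q_{n_{j_1}}+\xi_{j_1}-r,\;q_{n_{j_1}}+r\big).$$
Writing $C^\xi_{j_2}=(q_{n_{j_2}},q_{n_{j_2}}+\xi_{j_2})$, the inclusion $C^\xi_{j_2}\subset\mathcal{D}_r(C^\xi_{j_1})$ is equivalent to the two conditions $q_{n_{j_2}}\geq q_{n_{j_1}}+\xi_{j_1}-r$ and $q_{n_{j_2}}+\xi_{j_2}\leq q_{n_{j_1}}+r$ holding simultaneously. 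Hence the hypothesis $C^\xi_{j_2}\nsubseteq\mathcal{D}_r(C^\xi_{j_1})$ means that at least one of the reverse strict inequalities holds, namely
$$q_{n_{j_1}}+\xi_{j_1}-q_{n_{j_2}}>r \qquad\text{or}\qquad q_{n_{j_2}}+\xi_{j_2}-q_{n_{j_1}}>r.$$

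The second step is to bound $K\gamma_{i_0}$ from below by the same estimate used in Lemma \ref{onlylonggapscanjump}, applied to the two gaps $C^\xi_{j_1},C^\xi_{j_2}$ over which $C^\gamma_{i_0}$ jumps. The defining inequalities of the two jumps give $F(q_{n_{i_0}})<\min\{q_{n_{j_1}},q_{n_{j_2}}\}$ and $F(q_{n_{i_0}}+\gamma_{i_0})>\max\{q_{n_{j_1}}+\xi_{j_1},\,q_{n_{j_2}}+\xi_{j_2}\}$, so that the $K$-Lipschitz bound on $F$ over the interval of length $\gamma_{i_0}$ yields
$$K\gamma_{i_0}\geq\max\big\{\,|q_{n_{j_1}}+\xi_{j_1}-q_{n_{j_2}}|,\;|q_{n_{j_2}}+\xi_{j_2}-q_{n_{j_1}}|\,\big\}.$$
In whichever of the two cases above occurs, the corresponding term on the right already exceeds $r$, so $K\gamma_{i_0}>r$, as required. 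I note that although Lemma \ref{onlylonggapscanjump} is stated for non-decreasing $F$, its estimate uses only the jump inequalities and the Lipschitz bound, so it transfers verbatim to the present $K$-Lipschitz $F$ without the monotonicity assumption.

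The whole argument is essentially bookkeeping; the only point deserving mild care is the negation of the inclusion in (2), where one must correctly match each failed endpoint bound of $C^\xi_{j_2}$ to the correct term in the maximum produced by the jump estimate. I do not anticipate any genuine obstacle beyond this verification.
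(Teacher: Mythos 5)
Your proof is correct and follows essentially the same route as the paper: negate the inclusion $C^\xi_{j_2}\subset\mathcal{D}_r(C^\xi_{j_1})$ to obtain the two strict inequalities, then invoke the jump estimate of Lemma \ref{onlylonggapscanjump} to conclude $K\gamma_{i_0}>r$ (the paper simply cites the lemma where you re-derive its estimate). Your side remark that the lemma's estimate does not actually use the monotonicity of $F$ is a worthwhile observation, since the proposition's hypothesis, unlike the lemma's, does not assume $F$ is non-decreasing and the paper cites the lemma without comment on this point.
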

\begin{proof}
Statement $(1)$ is easy to see.
For statement $(2)$, notice that if $C^\xi_{j_2}\nsubseteq \mathcal{D}_r(C^\xi_{j_1})$, this means that either $q_{n_{j_1}}+\xi_{j_1}-r-q_{n_{j_2}}>0$, or $q_{n_{j_2}}+\xi_{j_2}-q_{n_{j_1}}-r>0$. In any case, we obtain that 

$$ \max \{|q_{n_{j_1}}+\xi_{j_1}-q_{n_{j_2}}|,|q_{n_{j_2}}+\xi_{j_2}-q_{n_{j_1}}|\}>r,$$
and the result follows from Lemma \ref{onlylonggapscanjump}.
\end{proof}

Finally we are able to prove the main result about the sets $L_\gamma$:

\begin{proof}[Proof of Theorem \ref{nolipschitzmap}]
We are going to construct inductively by a diagonal method a sequence $\gamma^*=(\gamma_i^*)_{i=1}^\infty\in \Gamma$ with the following properties: 

\begin{itemize}
    \item[(1)] $\gamma^*_i< 2^{-(i+1)}K^{-1}\varepsilon_0$, for all $i\in\mathbb{N}$.
    \item[(2)] $C^{\gamma^*}_i$ is non-empty for every $i\in\mathbb{N}$.
    \item[(3)] If $\gamma\in\Gamma$ is a sequence such that $\gamma_j=\gamma_j^*$ for all $j\leq i$, then there is no Lipschitz map $F\colon L_{\gamma}\rightarrow L_{\gamma^{i}}$ with $\|F\|_\text{Lip}\leq K$ such that $F(0)=0$ and $F(1)=1$. In particular, there is no Lipschitz map $F\colon L_{\gamma^*}\rightarrow L_{\gamma^n}$ with $\|F\|_\text{Lip}\leq K$ such that $F(0)=0$ and $F(1)=1$ for any $n\in\mathbb{N}$.
\end{itemize}

For $i=1$, consider the sequence $\gamma^1$ and define $\gamma_1^*=2^{-2}K^{-1}\varepsilon_0\gamma_1^1$. Let $\gamma=(\gamma_i)_{i=1}^\infty\in \Gamma$ with $\gamma_1=\gamma^*_1$. Suppose by contradiction that there exists a Lipschitz map $F\colon L_{\gamma}\rightarrow L_{\gamma^1}$ with $\|F\|_\text{Lip}\leq K$ and $F(0)=0$ and $F(1)=1$. We assume $F$ to be non-decreasing by Proposition \ref{wlogFisnondecreasing}.

Given the gap $C^{\gamma^1}_1=(q_1,q_1+\gamma^1_1)$, by Lemma \ref{gapjumpsovergap}, there exists a gap $C^\gamma_k=(q_{n_k},q_{n_k}+\gamma_k)$ such that $C^\gamma_k$ jumps over $C^{\gamma^1}_1$. Then, by Lemma \ref{onlylonggapscanjump}, we have that $K\gamma_k>\gamma^1_1$. However, we know that $K\gamma_k\leq K\gamma_1=2^{-2}\varepsilon_0\gamma_1^1<\gamma_1^1$, a contradiction. The first step of the induction is complete.

Suppose we have selected $\{\gamma^*_j\}_{j=1}^i$ verifying the desired properties for $i\in \mathbb{N}$, and consider now the sequence $\gamma^{i+1}$. Let $\sigma=\{j_k\}_{k=1}^i$ be an ordering of the sequence $\{1,\dots,i\}$. Consider $j_1$, put $n_0=1$, and define

$$S_{j_1}=\mathcal{D}_{K\gamma^*_{j_1}}(C^{\gamma^{i+1}}_{n_0}). $$

$S_{j_1}$ is the sweeping of the first gap of $\gamma^{i+1}$ by $K\gamma^*_{j_1}$. The measure of $S_{j_1}$ is at most $2\gamma_{j_1}^0<2^{-j_1}\varepsilon_0$. Hence, since $L_\gamma$ has measure greater than $\varepsilon_0$, the set $L_\gamma\setminus S_{j_1}=L_\gamma\cap ([0,1]\setminus S_{j_1})$ is nonempty. Since $[0,1]\setminus S_{j_1}$ is a finite union of intervals, by Proposition \ref{propertieslgamma} (4), there must exist infinitely many endpoints in $L_\gamma\setminus S_{j_1}$. We can then consider

$$n_{j_1}=\min\{n> n_0\colon ~C^{\gamma^{i+1}}_n\nsubseteq S_{j_1}\}. $$

Intuitively, $C^{\gamma^{i+1}}_{n_{j_1}}$ is the biggest gap of $\gamma^{i+1}$ smaller than $C^{\gamma^{i+1}}_{n_0}$ which is not contained in the sweeping $S_{j_1}$. We continue the process defining

$$ S_{(j_1,j_2)}=S_{j_1}\cup \mathcal{D}_{K\gamma^*_{j_2}}(C^{\gamma^{i+1}}_{n_{j_1}}).$$

The measure of $S_{(j_1,j_2)}$ is at most $(2^{-j_1}+2^{-j_2})\varepsilon_0<\varepsilon_0$, so we can make the same argument as before to find

$$n_{(j_1,j_2)}=\min\{n> n_{j_1}\colon ~C^{\gamma^{i+1}}_n\nsubseteq S_{(j_1,j_2)}\}, $$
which will be the biggest gap of $\gamma^{i+1}$ smaller than $C^{\gamma^{i+1}}_{n_{j_1}}$ not contained in $S_{(j_1,j_2)}$, and thus, not contained in either $\mathcal{D}_{K\gamma^*_{j_1}}(C^{\gamma^{i+1}}_{n_0})$ nor $\mathcal{D}_{K\gamma^*_{j_2}}(C^{\gamma^{i+1}}_{n_{(j_1,j_2)}})$. 

Repeating this process $i$ times, we can define $n_{\sigma}=n_{(j_1,\dots,j_i)}\in \mathbb{N}$ such that $C_{n_{\sigma}}^{\gamma_{i+1}}$ is the biggest gap of $\gamma^{i+1}$ not contained in $\mathcal{D}_{K\gamma^*_{j_k}}(C^{\gamma^{i+1}}_{n_{(j_1,\dots,j_k)}})$ for any $1\leq k\leq i$, and smaller than $C^{\gamma^{i+1}}_{n_{(j_1,\dots,j_k)}}$ for every $1\leq k\leq i$. Notice that this last condition can be written as:

\begin{equation}
\label{gammasigmaisthesmallest}
\gamma^{i+1}_{n_\sigma}< \min_{1\leq k\leq i} \gamma^{i+1}_{n_{(j_1,\dots,j_k)}}.
\end{equation}

Now, let $\Omega=\{\sigma=\{j_k\}_{k=1}^i\colon \sigma\text{ is and ordering of }\{1,\dots,i\}\}$. Clearly $\Omega$ is a finite set, so we can define $n_\Omega=\max\{n_\sigma\colon \sigma\in \Omega\}$. The corresponding gap $C^{\gamma_{i+1}}_{n_\Omega}$ is smaller than or equal to each $C_{n_{\sigma}}^{\gamma_{i+1}}$. Equivalently, we have that 
\begin{equation}
\label{gammaomegaisthesmallest}
\gamma^{i+1}_{n_\Omega}\leq \min_{\sigma\in \Omega}\gamma^{i+1}_{n_\sigma}.
\end{equation}

Finally, define $\gamma^{*}_{i+1}<2^{-(i+2)}K^{-1}\varepsilon_0\gamma_{n_\Omega}^{i+1}$. We can take it small enough so that $C^{\gamma^*}_{i+1}$ is non-empty. Let $\gamma=(\gamma_i)_{i=1}^\infty\in \Gamma$ with $\gamma_j=\gamma^*_j$ for $1\leq j\leq i+1$. Suppose by contradiction that there exists a Lipschitz map $F\colon L_{\gamma}\rightarrow L_{\gamma^{i+1}}$ with $\|F\|_\text{Lip}\leq K$ and $F(0)=0$ and $F(1)=1$. Again, we may assume that $F$ is non-decreasing.

Put $n_0=1$, and consider the gap $C_{n_0}^{\gamma^{i+1}}$. By Lemma \ref{gapjumpsovergap}, there exists a $j_1\in\mathbb{N}$ such that $C^\gamma_{j_1}$ jumps over $C_{n_0}^{\gamma^{i+1}}$. Since $K\gamma_{i+1}<\gamma^{i+1}_{n_\Omega}<\gamma^{i+1}_1$, by Lemma \ref{onlylonggapscanjump} we have that $j_1\leq i$. 

Consider now the gap $C^{\gamma^{i+1}}_{n_{j_1}}$, and take $j_2$ such that $C^{\gamma}_{j_2}$ jumps over $C^{\gamma^{i+1}}_{n_{j_1}}$. Again, since $K\gamma_{i+1}<\gamma_{n_\Omega}<\gamma^{i+1}_{n_{j_1}}$, we obtain that $j_2\leq i$. Moreover, $j_2$ is different from $j_1$. Indeed, if $j_2=j_1$, then $C^{\gamma}_{j_1}$ jumps over $C_{n_0}^{\gamma^{i+1}}$ and $C^{\gamma^{i+1}}_{n_{j_1}}$. By the choice of $n_{j_1}$, $C^{\gamma^{i+1}}_{n_{j_1}}\nsubseteq \mathcal{D}_{K\gamma^*_{j_1}}(C^{\gamma^{i+1}}_{n_0})$, so by Proposition \ref{propertiesdilations}, we have that $K\gamma_{j_1}>K\gamma^*_{j_1}$, a contradiction.

We can repeat this process $i$ times until we obtain a sequence $\sigma=\{j_k\}_{k=1}^i$ of $i$ different numbers in $\{1,\dots,i\}$ (so $\sigma\in \Omega$) such that $C_{j_{k}}^\gamma$ jumps over $C^{\gamma^{i+1}}_{n_{(j_1,\dots,j_{k})}}$ for all $1\leq k\leq i$. To finish the proof, consider the gap $C^{\gamma^{i+1}}_{n_\sigma}$, where $n_\sigma$ is as defined above for $\sigma\in \Omega$, and take $\tilde{i}$ such that $C^{\gamma}_{\tilde{i}}$ jumps over $C^{\gamma^{i+1}}_{n_\sigma}$. Reasoning as before, by choice of $\gamma^*_{i+1}$ and using equation \eqref{gammaomegaisthesmallest} we have that $\tilde{i}\leq i$. Then $\tilde{i}=j_{k_0}$ for some $1\leq k_0\leq i$. We have chosen $k_0$ such that $C_{j_{k_0}}^\gamma$ jumps over $C^{\gamma^{i+1}}_{n_{(j_1,\dots,j_{k_0})}}$. Moreover, $C^{\gamma^{i+1}}_{n_\sigma}$ is not contained in $\mathcal{D}_{K\gamma^{0}_{j_{k_0}}}\big( C^{\gamma^{i+1}}_{n_{(j_1,\dots,j_{k_0})}}\big)$. Therefore, by Proposition \ref{propertiesdilations}, we have that $K\gamma_{j_{k_0}}>K\gamma^*_{j_{k_0}}$, a contradiction. 

\end{proof}
\subsection{Construction of the metric space without Lipschitz SRP}
The rest of this section is dedicated to constructing the complete metric space that does not admit a Lipschitz retraction onto any separable subset containing two particular fixed points, for which we will be using the previous result. 

Let $\widehat{L}_{\gamma}=\{(p,\gamma)\colon p\in L_\gamma,~p\neq 0,~p\neq 1\}$ for each $\gamma\in \Gamma$. Define $\widehat{M}=\bigcup_{\gamma\in \Gamma} \widehat{L}_\gamma$. This set is the disjoint union of all $L_\gamma$ minus the points $\{0,1\}$ for each $\gamma\in\Gamma$. We consider this disjoint union because we want to construct $M$ in such a way that the different $L_\gamma$ only have in common the points $0$ and $1$. Hence, we define $M=\{0,1\}\cup \widehat{M}$. We endow $M$ with the metric $d$, defined by:

$$ 
\small
d(p,q)=
\begin{cases}
1,&\text{ if } p=1,q=0,\\
p', &\text{ if }q=0,~ p=(p',\gamma),\text{ for } p'\in L_\gamma,~ \gamma\in \Gamma,\\
1-p', &\text{ if }q=1,~p=(p',\gamma),\text{ for }p'\in L_\gamma,~ \gamma\in \Gamma,\\
|p-q|, &\text{ if }p=(p',\gamma),q=(q',\gamma),\text{ for }p',q'\in L_\gamma,~ \gamma\in \Gamma,\\
H(p,q),&\text{ if }p=(p',\gamma_1),q=(q',\gamma_2),\text{ for }p'\in L_{\gamma_1},q'\in L_{\gamma_2},~ \gamma_1\neq\gamma_2\in \Gamma,
\end{cases}
$$
where $H(p,q)=\min\big\{d(p,0)+d(q,0),d(p,1)+d(q,1)\big\}$ for any $p,q\in M$.
It is straightforward to see that this does in fact define a complete metric. Moreover, considering the isometry $I_\gamma\colon L_\gamma\rightarrow \widehat{L}_\gamma\cup\{0,1\}$ given by $I(0)=0$, $I(1)=1$ and $I(p)=(p,\gamma)$, we have that $L_\gamma$ is isometrically embedded in $M$ for all $\gamma\in\Gamma$. 

We can finally prove the main result of the section:

\begin{theorem}
Let $(M,d)$ be the metric space as defined above. Then no separable subset of $M$ containing $0$ and $1$ is a Lipschitz retract of $M$. In particular, $M$ fails the Lipschitz SRP.
\end{theorem}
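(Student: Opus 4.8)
The plan is to assume, for contradiction, that some separable $A\subseteq M$ with $0,1\in A$ is a Lipschitz retract, witnessed by $R\colon M\to A$ with $K=\|R\|_{\text{Lip}}\geq 1$ (the retraction fixes the distance-$1$ pair $0,1$), and to distill from $R$ a map of the kind forbidden by Theorem \ref{nolipschitzmap}. First I would show that $A$ meets only countably many pages $\widehat{L}_\gamma$. This is forced by the shape of $d$: if $p=(p',\gamma_1)$ and $q=(q',\gamma_2)$ lie in distinct pages with $p',q'\in[1/4,3/4]$, then $d(p,q)=\min\{p'+q',(1-p')+(1-q')\}\geq 1/2$, so the ``middle thirds'' of distinct pages form a $1/2$-separated family, and a separable set cannot meet uncountably many of them. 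Enumerating the pages met by $A$ as $(\gamma^n)_n$ and feeding this countable family together with $K$ into Theorem \ref{nolipschitzmap} yields $\gamma^*\in\Gamma$; since the identity would otherwise be admissible, $\gamma^*\neq\gamma^n$ for all $n$, whence $A\cap\widehat{L}_{\gamma^*}=\emptyset$.

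Next I would push the obstruction down to the real line. Let $\pi\colon M\to[0,1]$ send $(p',\gamma)\mapsto p'$ and fix $0,1$; the computation above shows $\pi$ is $1$-Lipschitz, since every inter-page distance is routed through a base point. Restricting $R$ to the isometric copy of $L_{\gamma^*}$ and composing gives $g:=\pi\circ R\colon L_{\gamma^*}\to[0,1]$, a $K$-Lipschitz map with $g(0)=0$, $g(1)=1$ and $g(L_{\gamma^*})\subseteq\bigcup_n L_{\gamma^n}$. A key quantitative remark is that over a gap of $L_{\gamma^*}$ of length $\gamma^*_i$ the map $g$ can move by at most $K\gamma^*_i<2^{-(i+1)}\varepsilon_0\leq\varepsilon_0/4<1/8$, by property (1) of the construction of $\gamma^*$; in particular $g$ cannot jump across the band $[1/4,3/4]$.

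The heart of the argument, and the step I expect to be the main obstacle, is to extract from $g$ a genuine \emph{single-page} crossing, i.e.\ to rule out that $R$ ``weaves'' among the countably many pages of $A$. Put $x^\ast=\min\{x\in L_{\gamma^*}\colon g(x)\geq 3/4\}$ and $a=\max\{x<x^\ast\colon g(x)\leq 1/4\}$; both exist, and on $(a,x^\ast)$ one has $1/4<g<3/4$. Since all these coordinates lie in $(1/4,3/4)$, the separation estimate says distinct pages are $\geq 1/2$ apart in $M$, while across any gap $R$ moves by at most $K\gamma^*_i<1/8<1/2$; hence the page index is locally constant and $R$ sends the whole stretch $[a,x^\ast]$ into one page $\widehat{L}_{\gamma^{n_0}}\cup\{0,1\}$. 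Thus $x\mapsto g(x)$ is a $K$-Lipschitz map of $L_{\gamma^*}\cap[a,x^\ast]$ into $L_{\gamma^{n_0}}$ whose image fills $(1/4,3/4)$ up to the small omitted jump-intervals.

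Finally, after monotonizing this stretch via Proposition \ref{wlogFisnondecreasing} and routing the two short end pieces through the base points $0,1$ (which lie in every page), I would regard it as a $K$-Lipschitz map $L_{\gamma^*}\to L_{\gamma^{n_0}}$ fixing $0$ and $1$; by Lemma \ref{gapjumpsovergap} every gap of $L_{\gamma^{n_0}}$ in the covered range is jumped by a gap of $L_{\gamma^*}$, and by Lemma \ref{onlylonggapscanjump} the jumping $\gamma^*$-gaps must be long relative to those of $\gamma^{n_0}$, which the extreme smallness of the $\gamma^*$-gaps forbids, exactly the contradiction engineered in Theorem \ref{nolipschitzmap}. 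The two delicate points, on which I would concentrate, are (i) the no-weaving reduction above, which is precisely what the separation of distinct pages' middles buys; and (ii) converting the band-crossing into a bona-fide endpoint-fixing map into a single $L_{\gamma^{n_0}}$ without the gap structure destroying Lipschitzness near $0$ and $1$, where one either arranges the endpoint routing with care or simply re-runs the sweeping argument of Theorem \ref{nolipschitzmap} localized to the covered range $(1/4,3/4)$.
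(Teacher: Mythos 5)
Your set-up is sound and parallels the paper's: reduce to countably many pages met by $S$, invoke Theorem \ref{nolipschitzmap} to get $\gamma^*$, restrict $R$ to the isometric copy of $L_{\gamma^*}$, and use the fact that distinct pages are far apart in the middle band to prevent weaving. Two caveats there: your $1/2$-separation argument only shows $S$ meets countably many pages \emph{in their middle thirds} (the full claim needs a pigeonhole on the distance to the nearer base point, since points of different pages near $0$ or $1$ can be arbitrarily close); and your quantitative bound $K\gamma^*_i<2^{-(i+1)}\varepsilon_0$ uses property (1) of the \emph{construction} of $\gamma^*$ inside the proof of Theorem \ref{nolipschitzmap}, not its statement, so you would have to restate that theorem with this extra conclusion (the paper's own proof needs only the statement). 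Both are fixable.

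The genuine gap is the final step, which is in fact the heart of the paper's proof. The literal collapse you describe (send $x<a$ to $0$, follow $\pi\circ R_0$ on $[a,x^*)$, send $x\geq x^*$ to $1$) need not be Lipschitz: at $a$ you only know $g(a)\leq 1/4$, and $R_0$ may perfectly well remain inside the page $\widehat{L}_{\gamma^{n_0}}$ at points below $a$ (with $g$ hovering at $1/4$, say); then $F$ jumps from $0$ to $g(a)\approx 1/4$ across an arbitrarily short distance. So the cut must be moved down to where $R_0$ genuinely \emph{exits} the page; but on that extension the values of $g$ are no longer confined to the band, and the exit can occur ``via the base point $1$'' (the cross-page distance being realized by $2-g(\cdot)-g(\cdot)$), in which case collapsing to $0$ there is impossible, while collapsing to $1$ destroys $F(0)=0$. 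The symmetric problem occurs above $x^*$, where $R_0$ can continue inside the page with $g$ descending again, making the top exit ``via $0$''. What is needed is a single stretch whose bottom exit is via $0$ \emph{and} whose top exit is via $1$; note that Theorem \ref{nolipschitzmap} does not forbid $K$-Lipschitz maps sending both $0$ and $1$ to $0$, so wrong-orientation stretches give no contradiction and cannot simply be discarded. Producing a correctly oriented stretch is exactly what the paper's set $\Delta_0$ accomplishes: membership in $\Delta_0$ encodes ``top exit via $1$'' (the condition $d(R_0(z),1)+d(R_0(y_x),1)\leq K(y_x-z)$), and the minimality of $P=\min\Delta_0$, after ruling out $R_0(P)=1$, forces the bottom exit to be via $0$ (property (4) there). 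Neither of your two suggested repairs supplies this mechanism: ``routing the end pieces with care'' is the problem restated, and ``re-running the sweeping argument localized to $(1/4,3/4)$'' fails because the gaps produced by the sweeping construction of $\gamma^*$ carry no location control and need not lie in the covered band (also, $L_\gamma$ may have too little measure inside $(1/4,3/4)$ for the sweeping estimates to run there). So the proposal captures the correct reduction but omits the decisive minimality/orientation argument.
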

\begin{proof}
Let $S\subset M$ be a separable subset with $0,1\in S$. Since it is separable, $S$ intersects $\widehat{L}_\gamma$ in a non-empty set for only countably many $\gamma\in\Gamma$. Let $\{\gamma^n\}_{n=1}^\infty$ be a sequence in $\Gamma$ such that $S\subset \bigcup_{n\in\mathbb{N}}L_{\gamma^n}$. Suppose that there exists $K>1$ and a Lipschitz retraction $R\colon M\rightarrow S$ with $\|R\|_\text{Lip}\leq K$. Using Theorem \ref{nolipschitzmap}, let $\gamma^*\in \Gamma$ be a sequence such that there is no $K$-Lipschitz function $F\colon {L_{\gamma^*}}\rightarrow L_{\gamma^n}$ with $F(0)=0$ and $F(1)=1$ for any $n\in\mathbb{N}$. 

We can restrict $R$ to $\widehat{L}_{\gamma^*}\cup\{0,1\}\subset M$, which is isometric to $L_{\gamma^*}$ by the map $I_{\gamma^*}$ we defined above. The resulting map $R_0=R\circ I_{\gamma^*}\colon L_{\gamma^*}\rightarrow S$ has Lipschitz constant $K$ as well. If we manage to restrict the image of $R_0$ to a single $L_{\gamma^{n_0}}$ without increasing the Lipschitz constant we will reach a contradiction. Therefore, the rest of the proof will be dedicated to defining a $K$-Lipschitz map from $L_{\gamma^*}$ to a particular $L_{\gamma^{n_0}}$ for some $n_0\in\mathbb{N}$ that fixes $0$ and $1$, using the map $R_0$ as a starting point.

Consider the following set:
\begin{align*}
    \Delta_0= \big\{x\in L_{\gamma^*}\colon &\exists n_x\in\mathbb{N},~ \exists y_x\in L_{\gamma^*},~y_x\geq x, \text{ such that } \\
    & R_0\big([x,y_x)\big)\subset \widehat{L}_{\gamma^{n_x}}\cup\{0,1\},\text{ and }\\
    & d\big(R_{0}(z),1\big)+d\big(R_0(y_x),1\big)\leq K(y_x-z),~\forall z\in [x,y_x)\big\}
\end{align*}

The point $1\in L_{\gamma^*}$ trivially verifies $1\in \Delta_0$, so the infimum $P=\inf \Delta_0$ exists. Moreover, if $x\in L_{\gamma^*}$ verifies the condition in $\Delta_0$ with $n_x\in\mathbb{N}$ and $y_x\in L_{\gamma^*}$, then every $z\in [x,y_x)$ also verifies it with $n_z=n_x$ and $y_z=y_x$. We can use this fact and the compactness of $L_{\gamma^*}$ to deduce that $P\in \Delta_0$ (the infimum is actually a minimum). 

Let us first prove that $R_0(P)\neq 1$. Indeed, suppose that $R_0(P)=1$. First, we show that in that case we have that 

\begin{equation}
\label{gapbehindP}
\bigg(P-\frac{2}{3K},P\bigg)\cap L_{\gamma^*}=\emptyset. 
\end{equation}

By contradiction, suppose there exists an $x\in L_{\gamma^*}$ with $0<P-x<2/(3K)$. If $R_0(x)=0$, then 
$$d\big(R_0(x),R_0(P)\big)=1>2/3>K(P-x),$$
a contradiction. If $R_0(z)=1$ for all $z\in [x,P)\cap L_{\gamma^*}$, then $x\in \Delta_0$, which contradicts the minimality of $P$. Hence, for some $z\in [x,P)\cap L_{\gamma^*}$, we have $R_0(z)\neq 0$ and $R_0(z)\neq 1$. Without loss of generality, we can assume that $x$ verifies this property itself, so $R_0(x)\in \widehat{L}_{\gamma^{n_1}}$ for some $n_1\in\mathbb{N}$. Since $x<P$ and by the minimality of $P$, there exists a $y\in L_{\gamma^*}$ with $y\in (x,P)$ and $R_0(y)\notin \widehat{L}_{\gamma^{n_1}}\cup\{0,1\}$ (otherwise $x\in \Delta_0$ with $n_x=n_1$ and $y_x=P$). We can define then:

$$y_0=\min\{y\in L_{\gamma^*}\colon y\in (x,P),~\text{and } R_0(y)\notin \widehat{L}_{\gamma^{n_1}} \}.$$
Clearly $R_0\big([x,y_0)\big)\subset \widehat{L}_{\gamma^{n_1}}\cup\{0,1\}$ and $R_0(y_0)\notin \widehat{L}_{\gamma^{n_1}}$, so for any $z\in [x,y_0)$ we have:

$$d(R_0(z),R_0(y_0))=\min\{d(R_0(z),0)+d(R_0(y_0),0),d(R_0(z),1)+d(R_0(y_0),1)\}.$$

The fact that $d\big(R_0(z),R_0(y_0)\big)=d\big(R_0(z),1\big)+d\big(R_0(y_0),1\big)$ for all $z\in [x,y_0)$ contradicts the minimality of $P$. Hence, there exists a $z_0\in [x,y_0)$ such that $d\big(R_0(z),R_0(y_0)\big)=d\big(R_0(z),0\big)+d\big(R_0(y_0),0\big)$. However, on the one hand we have that $d\big(R_0(z),R_0(y_0)\big)\leq K(z-y_0)<2/3$, and on the other hand:
\begin{align*}
    d\big(R_0(z),0\big)+d\big(R_0(y_0),0\big)=2-d\big(R_0(z),1\big)-d\big(R_0(y_0),1\big)>2-4/3=2/3,
\end{align*}
a contradiction. Hence, we have proven equation \eqref{gapbehindP}. Define now 
$$P_0=\max\{x\in L_{\gamma^*}\colon x<P\},$$
which exists by compactness of $L_{\gamma^*}$ and the fact that $\big(P-2/(3K),P\big)\cap L_{\gamma^*}=\emptyset$. The point $P_0$ verifies $P_0<P$ and $(P_0,P)\cap L_{\gamma^*}=\emptyset$. Then $P_0\in L_{\gamma^*}\in \Delta_0$ trivially with $y_{P_0}=P$. However, this again leads to a contradiction with the choice of $P$. We conclude then that $R_0(P)\neq 1$.

Let us prove now that we can choose $P,Q\in L_{\gamma^*}$ with $P\leq Q$ and $n_0\in\mathbb{N}$ such that 
\begin{itemize}
    \item[(1)] $R_0\big([P,Q)\big)\subset \widehat{L}_{\gamma^{n_0}}\cup\{0,1\}$,
    \item[(2)] $d(R_{0}(x),1)+d(R_0(Q),1)\leq K(Q-x),~\forall x\in [P,Q)$, and
    \item[(3)] If $x\in L_{\gamma^*}$ with $x<P$, then there exists $z\in L_{\gamma^*}$ with $x\leq z<P$ such that $R_0(z)\notin \widehat{L}_{\gamma^{n_0}}\cup\{0,1\}$.
    \item[(4)] If $R_0(P)\neq 0$, there exists $P_0\in L_{\gamma^*}$ with $P_0<P$ such that $L_{\gamma^*}\cap(P_0,P)=\emptyset$ and  $d(R_0(P),0)+d(0,R_0(P_0))=d(R_0(P),R_0(P_0))$.
\end{itemize}

Indeed, the first two properties follow from the definition of $\Delta_0$ and the choice of $P$. For the third, notice that if $x\in L_{\gamma^*}$ with $x<P$, by the minimality of $P$ we have necessarily that there exists $z\in L_{\gamma^*}$ with $x\leq z<P$ such that either $R_0(z)\notin \widehat{L}_{\gamma^{n_0}}\cup\{0,1\}$ or $d(R_{0}(z),1)+d(R_0(Q),1)>K(Q-z)$. Suppose that $R_0(z)\in \widehat{L}_{\gamma^{n_0}}\cup\{0,1\}$. Then $d\big(R_{0}(z),1\big)+d\big(R_0(Q),1\big)>K(Q-z)$. 

If $d(R_0(z),1)\leq d(R_0(P),1)$, then 
\begin{align*}
    d(R_0(z),1)+d(1,R_0(Q))&\leq d(R_0(P),1)+d(1,R_0(Q))\\
    &\leq K(Q-P)< K(Q-z),
\end{align*}
a contradiction. Then $d(R_0(z),1)> d(R_0(P),1)$, and we have that $d(R_0(z),1)=d(R_0(z),R_0(P))+d(R_0(P),1)$ because they belong to the same $\widehat{L}_{\gamma^{n_0}}$. This implies that

\begin{align*}
    d(R_0(z),1)+d(R_0(Q),1)&=d(R_0(z),R_0(P))+d(R_0(P),1)+d(R_0(Q),1)\\
    &\leq K(P-z)+K(Q-P)=K(Q-z),
\end{align*}
which again contradicts the choice of $z$, and property (3) follows.

For the fourth property, since $R_0(P)$ is neither $0$ nor $1$, we have that $R_0(P)\in \widehat{L}_{\gamma^{n_0}}$, and we can choose $\delta>0$ such that $K\delta<\min\{d\big(R_0(P),0\big),d\big(R_0(P),1\big)\}$. If $x\in L_{\gamma^*}$ verifies $P-x<\delta$, then $d\big(R_0(P),R_0(x)\big)<K\delta$, and thus $x\in L_{\gamma^{n_0}}$. However, this contradicts property (3). Hence, $(P-\delta,P)\cap L_{\gamma^*}=\emptyset$. Consider
$$P_0=\max\{x\in L_{\gamma^*}\colon x<P-\delta\}, $$
which exists by compactness of $L_{\gamma^*}$. Clearly $(P_0,P)\cap L_{\gamma^*}=\emptyset$, so by the property (3) we have that $R_0(P_0)\notin \widehat{L}_{\gamma^{n_x}}\cup\{0,1\}$. This implies that $R_0(P)$ and $R_0(P_0)$ belong to different $\widehat{L}_\gamma$, so 

$$d\big(R_0(P),R_0(P_0)\big)=\min\{d\big(R_0(P),0\big)+d\big(R_0(P_0),0\big),d\big(R_0(P),1\big)+d\big(R_0(P_0),1\big)\}.$$ 

Then $d\big(R_0(P),R_0(P_0)\big)=d\big(R_0(P),0\big)+d\big(R_0(P_0),0\big)$, because $d\big(R_0(P),R_0(P_0)\big)=d\big(R_0(P),1\big)+d\big(R_0(P_0),1\big)$ contradicts the minimality of $P$. We have proven properties $(1)-(4)$.

Finally, define the map $F_0\colon L_{\gamma^*}\rightarrow L_{\gamma^{n_0}}$ by 
$$ 
F_0(x)=
\begin{cases}
0,&\text{ if } x\in L_{\gamma^*},~ x< P,\\
I^{-1}_{\gamma^{n_0}}\big(R_0(x)\big), &\text{ if }x\in [P,Q)\cap L_{\gamma^*},\\
1, &\text{ if } x\in L_{\gamma^*},~ x\geq Q.
\end{cases}
$$
This map is well defined by the choice of $n_0$, and verifies that $F_0(0)=0$ and $F_0(1)=1$. Let us prove that it is $K$-Lipschitz, which will be a contradiction with the choice of $\gamma^*$. Notice that we only need to prove that $|F_0(x)-F_0(y)|\leq K(x-y)$ in two cases:

\begin{itemize}
    \item[(i)] If $x\in L_{\gamma^*}$ with $x<P$ and $y=P$.
    \item[(ii)] If $x\in [P,Q)\cap L_{\gamma^*}$ and $y=Q$.
\end{itemize}

The rest of possibilities are either trivial or can be easily deduced from the above two. Suppose then first that $x\in L_{\gamma^*}$ with $x<P$ and $y=P$. If $R_0(P)=0$ then the desired inequality follows trivially. Otherwise, by property $(4)$ we can find $x\leq P_0<P$ such that $d(R_0(P),0)+d(0,R_0(P_0))=d(R_0(P),R_0(P_0))$. Therefore:

\begin{align*}
    |F_0(y)-F_0(x)|&=d(R_0(P),0)<d(R_0(P),0)+d(0,R_0(P_0))=d(R_0(P),R_0(P_0))\\
    &\leq K(P-P_0)\leq K(y-x).
\end{align*} 

Finally, suppose $x\in [P,Q)\cap L_{\gamma^*}$ and $y=Q$. Then, by the property $(2)$ we have that 

$$|F_0(y)-F_0(x)|=d(R_0(x),1)<d(R_0(x),1)+d(1,R_0(y))\leq K(y-x). $$
Therefore $F_0\colon L_{\gamma^*}\rightarrow L_{\gamma^{n_0}}$ is a Lipschitz map that fixes $0$ and $1$ with $\|F_0\|_\text{Lip}\leq K$, which contradicts the choice of $\gamma^*$, and we are done. 
\end{proof}

\begin{remark}
It is worth emphasizing that in order to construct this metric space, we rely heavily on the non-connectedness of the subsets $L_\gamma$, so we are not able to translate all the techniques we displayed here to the Banach space case. As mentioned in the introduction, finding a Banach space failing the Lipschitz SRP would solve in the negative the long standing conjecture of Benyamini and Lindenstrauss of whether every separable Banach space is a Lipschitz retract of its bidual. The nonseparable case was already proven to fail by Kalton in \cite{Kal11}.

Likewise, the metric space $M$ we defined does have the weaker $\mathcal{F}$-Lipschitz SRP. In fact, $\mathcal{F}(M)$ admits a commutative $1$-projectional skeleton, and it is thus $1$-Plichko. We sketch the proof of this fact: 

Notice that for every closed subset $A$ of $[0,1]$, the map $L\colon [0,1]\rightarrow \mathcal{F}(\{0,1\})$ given by $L(x)=x\delta(1)$ is an $\mathcal{F}$-Lipschitz retraction with $\|L\|_\text{Lip}=1$. 

Set $I=[\Gamma]^{\leq\omega}$, that is, the collection of all countable subsets of $\Gamma$, partially ordered by inclusion. Then $I$ is directed and $\sigma$-complete. For each $C\in I$, we define 

$$A_C=\bigg(\bigcup_{\gamma\in C}\widehat{L}_\gamma\bigg)\cup\{0,1\}. $$
Then $A_C$ is separable for each $C\in I$. Finally, we can define the map $L_C\colon M\rightarrow\mathcal{F}(A_C)$ by $L_C\big((x,\gamma)\big)=\delta(x,\gamma)$ if $\gamma\in C$, and $L_C\big((x,\gamma)\big)=x\delta(1)$ otherwise. Again, this defines an $\mathcal{F}$-Lipschitz retraction with $\|L_C\|_\text{Lip}=1$ for each $C\in I$. By the universal property of Lipschitz free spaces, we can extend these maps to norm $1$ projections $P_C\colon \mathcal{F}(M)\rightarrow \mathcal{F}(A_C)$. It is routine now to check that $\{P_C\}_{C\in I}$ defines a commutative $1$-projectional skeleton in $\mathcal{F}(M)$.

\end{remark}

\section{Metric space failing Lipschitz RP($\Lambda,\Lambda$)}
In the previous section we have constructed a metric space which fails the Lipschitz SRP in a strong sense. In this section we prove that for every infinite cardinal number $\Lambda$, we can find a metric space that fails the Lipschitz RP($\Lambda,\Lambda$). Given a cardinal $\Lambda$, we want to construct a metric space $M$ such that there exists a subset with density character $\Lambda$ which is not contained in any subset of $M$ with density character $\Lambda$ that is a Lipschitz retraction of $M$. 

Set $\Gamma=\{\gamma=(\gamma_\alpha)_{\alpha\in\Lambda}\colon 0<\gamma_\alpha<1/2,~ \forall \alpha\in \Lambda\}$. For every $\gamma\in \Gamma$ we are going to define a subset $M_\gamma\subset [0,1]^\Lambda\subset\ell_\infty(\Lambda)$ in the following way:

$$M_\gamma=\{(p_\alpha)_{\alpha\in \Lambda}\in[0,1]^\Lambda\colon p_\alpha\in[0,1/2-\gamma_\alpha]\cup[1/2+\gamma_\alpha,1],~ \forall\alpha\in\Lambda\}, $$
endowed with the metric inherited from $\ell_\infty(\Lambda)$. Notice that if for a subset $A\subset \Lambda$ we write the point $e_A=((e_A)_\alpha)_{\alpha\in\Lambda}$ (called a \textit{vertex}) as the point such that $(e_A)_\alpha=1$ if $\alpha\in A$, and $(e_A)_\alpha=0$ if $\alpha\notin A$; then $e_A\in M_\gamma$, for any choice of $A\subset \Lambda$ and $\gamma\in \Gamma$. If $A=\{\alpha\}$ is a singleton, we write $e_{\{\alpha\}}=e_\alpha$. Notice also that $e_\emptyset = 0\in M$. 

As we did with the previous example, set for each $\gamma\in \Gamma$:

$$\widehat{M}_\gamma=\{(p,\gamma)\colon p\in M_\gamma,~ p\neq e_A,~\text{for any}A\subset \Lambda\},$$
and consider 
$$M=\bigg(\bigcup_{\gamma\in\Gamma}\widehat{M}_\gamma\bigg)\cup \{e_A\}_{A\subset \Lambda}.$$
Alternatively, the set $M$ can be realized by considering the disjoint union of each $M_\gamma$ and then identifying each vertex $e_A$ with its corresponding copy in every $M_\gamma$. 
We will define a metric $d$ on $M$ ``step-by-step". Let $p,q\in M$. If $p,q\in \widehat{M}_\gamma\cup \{e_A\}_{A\subset \Lambda}$ for a fixed $\gamma\in\Gamma$, then 

$$d(p,q)=\|p-q\|_\infty, $$
where we make the identification $p=(p,\gamma)\in \widehat{M}_\gamma$ for any point in $\widehat{M}_\gamma$. If $p,q\in M$ belong to different $\widehat{M}_{\gamma_1},\widehat{M}_{\gamma_2}$ respectively, then 

$$d(p,q)=\inf_{A\subset \Lambda} \{d(p,e_A)+d(e_A,q)\}. $$
Notice that if a point $p$ is not a vertex, then there exists a coordinate $\alpha\in \Lambda$ such that $0<p_\alpha<1$, and so $d(p,e_A)\geq\min\{p_\alpha,1-p_\alpha\}>0$ for every $A\subset \Lambda$. This shows that $d(p,q)=0$ if and only if $p=q$. The triangle inequality follows directly from the definition of the metric $d$.

Also note that for any $p\in M$ and any $\alpha\in \Gamma$, the coordinate $p_\alpha$ cannot be equal to $1/2$ by construction of $M$.

\subsection{Arc-connected components of $M$}
The metric space $M$ as defined above is not arc-connected. Indeed, if we define an arc between two points $p,q\in M$ as a continuous map $F\colon [a,b]\rightarrow M$ with $a<b$, such that $F(a)=p$ and $F(a)=q$, then, for instance, the points $e_A$ and $e_B$ are not connected by an arc if $A\neq B\subset \Lambda$. We will prove this in detail in this section. Let us first define precisely the concepts we will be using:

We say that two points $p,q$ are \textit{arc-connected} if there exists an arc between $p$ and $q$. This defines an equivalence relation in $M$. Moreover, if $F$ is an arc that connects $p$ and $q$, then it is straightforward to see that $p$ is arc-connected with any point in $F([a,b])$. Therefore, given $C\subset M$ an equivalence class of this relation in $M$, we have that any two points in $C$ are connected by an arc whose range is contained in $C$. We call the equivalence classes the \textit{arc-connected components of $M$}, and they form a partition of $M$. $M$ is said to be \textit{arc-connected} if $M$ is the only equivalence class. 

Note that if $p,q\in M$ are connected by an arc $F\colon [a,b]\rightarrow M$, we can assume without loss of generality that $a=0$ and $b=1$. 
\begin{lemma}
\label{ArcconnectingdifferentMgammasgoesthroughvertex}
Let $p,q\in M$ be two arc-connected points in $M$ such that $p\in M_{\gamma_1}$ and $q\in M_{\gamma_2}$ with $\gamma_1\neq \gamma_2\in \Gamma$. Then, for every arc $F\colon [0,1]\rightarrow M$ connecting $p$ and $q$ there exists $t_0\in (0,1)$ and $A\subset \Lambda$ such that $F(t_0)=e_A$. 
\end{lemma}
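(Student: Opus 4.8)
The plan is to exploit the fact that the pieces $\widehat{M}_\gamma$ are glued together only along the vertices $\{e_A\}_{A\subset\Lambda}$, which I will make precise by showing that each $\widehat{M}_\gamma$ is \emph{open} in $M$. Once this is known, the statement becomes a standard connectedness argument: a continuous arc starting in $\widehat{M}_{\gamma_1}$ and ending in $\widehat{M}_{\gamma_2}$ cannot pass directly from one open piece into another, so it must leave $\bigcup_\gamma\widehat{M}_\gamma$ at some moment, and the only remaining points are vertices. Throughout I read the hypothesis as $p\in\widehat{M}_{\gamma_1}$ and $q\in\widehat{M}_{\gamma_2}$ (so that $\gamma_1,\gamma_2$ are the pieces determined by these non-vertex points).

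The crucial step is the openness claim. Given $x\in\widehat{M}_\gamma$, I would first check that $r:=d\big(x,\{e_A\}_{A\subset\Lambda}\big)>0$: since $x$ is not a vertex there is a coordinate $\alpha$ with $x_\alpha\notin\{0,1\}$, hence $\min\{x_\alpha,1-x_\alpha\}>0$, and because $(e_A)_\alpha\in\{0,1\}$ for every $A$, we get $\|x-e_A\|_\infty\geq\min\{x_\alpha,1-x_\alpha\}$ for all $A$, so indeed $r>0$. I would then show $B(x,r)\subset\widehat{M}_\gamma$: a point $z$ with $d(x,z)<r$ cannot be a vertex, and if $z\in\widehat{M}_{\gamma'}$ with $\gamma'\neq\gamma$, then by the cross-piece formula $d(x,z)=\inf_A\{d(x,e_A)+d(e_A,z)\}\geq\inf_A d(x,e_A)=r$, a contradiction. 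Hence every $z\in B(x,r)$ lies in $\widehat{M}_\gamma$, proving $\widehat{M}_\gamma$ is open.

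With openness established, I would run a first–exit–time argument. Let $F\colon[0,1]\to M$ be an arc with $F(0)=p\in\widehat{M}_{\gamma_1}$ and $F(1)=q\in\widehat{M}_{\gamma_2}$, and put $t_0=\sup\{t\in[0,1]\colon F([0,t])\subset\widehat{M}_{\gamma_1}\}$. Openness of $\widehat{M}_{\gamma_1}$ at $F(0)$ forces $t_0>0$, and from the definition of the supremum one checks that $F(s)\in\widehat{M}_{\gamma_1}$ for every $s<t_0$, while $F(t_0)\notin\widehat{M}_{\gamma_1}$: if $F(t_0)\in\widehat{M}_{\gamma_1}$ with $t_0<1$, openness together with continuity would push the supremum strictly past $t_0$, and if $t_0=1$ then $F(t_0)=q\notin\widehat{M}_{\gamma_1}$ directly. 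It remains to identify $F(t_0)$ as a vertex. Since $M=\bigl(\bigcup_\gamma\widehat{M}_\gamma\bigr)\cup\{e_A\}_{A\subset\Lambda}$, I only need to exclude $F(t_0)\in\widehat{M}_{\gamma_3}$ for every $\gamma_3\neq\gamma_1$; but if this held, openness of $\widehat{M}_{\gamma_3}$ and continuity would give a left neighborhood of $t_0$ mapped into $\widehat{M}_{\gamma_3}$, contradicting $F(s)\in\widehat{M}_{\gamma_1}$ for $s<t_0$ (here $t_0>0$ is essential). Therefore $F(t_0)=e_A$ for some $A\subset\Lambda$, and since $q$ is not a vertex we have $t_0\neq 1$, so $t_0\in(0,1)$, as required. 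I expect the openness of the pieces $\widehat{M}_\gamma$ to be the only real obstacle, as it is precisely what encodes that the copies meet only along vertices; the remaining connectedness bookkeeping is routine.
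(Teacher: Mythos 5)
Your proof is correct and takes essentially the same approach as the paper: a first-exit-time argument combined with the openness of each piece $\widehat{M}_\gamma$ in $M$, so that the arc cannot jump from one open piece to another without passing through a vertex. The only difference is that you prove the openness claim explicitly (correctly, via the positive distance to the vertex set and the cross-piece formula $d(x,z)=\inf_A\{d(x,e_A)+d(e_A,z)\}$), whereas the paper simply asserts it.
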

\begin{proof}
Consider 

$$t_0=\min\{t\in [0,1]\colon F(t)\notin \widehat{M}_{\gamma_1}\}, $$
which exists by continuity of $F$ and the fact that $F(1)=q\notin \widehat{M}_{\gamma_1}$. We claim that $F(t_0)$ is a vertex. Indeed, suppose there exists $\gamma_0\in \Gamma$ such that $F(t_0)\in \widehat{M}_{\gamma_0}$. By definition of $t_0$, we have that $\gamma_0\neq \gamma_1$. The set $\widehat{M}_{\gamma_0}$ is open in $M$, so by continuity of $F$, there exists $\varepsilon>0$ such that $F((t_0-\varepsilon,t_0+\varepsilon))\subset \widehat{M}_{\gamma_0}$. However, this contradicts the minimality of $t_0$. Therefore $F(t_0)=e_A$ for some $A\subset \Lambda$.
\end{proof}
\begin{proposition}
\label{Connectedcomponents}
Let $M$ be the metric space as defined above for the cardinal $\Lambda$. Then, for each $A\neq B\subset \Lambda$, the points $e_A$ and $e_B$ are inside different arc-connected components of $M$.
\end{proposition}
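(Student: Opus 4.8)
The plan is to exploit the single structural feature that no point of $M$ can have a coordinate equal to $1/2$, combined with the fact that $e_A$ and $e_B$ must disagree in some coordinate. Since $A \neq B$, I would first fix some $\alpha \in \Lambda$ lying in the symmetric difference of $A$ and $B$; then exactly one of $(e_A)_\alpha$, $(e_B)_\alpha$ equals $1$ and the other equals $0$.

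The key step is to show that the $\alpha$-th coordinate projection $\pi_\alpha \colon M \to [0,1]$, given by $\pi_\alpha(p) = p_\alpha$ (using the identification of each point of $M$ with its representative in $[0,1]^\Lambda$), is $1$-Lipschitz for the metric $d$, hence continuous. I would verify this by inspecting the two clauses in the definition of $d$. When $p,q$ lie in a common $\widehat{M}_\gamma \cup \{e_A\}_{A \subset \Lambda}$, we have $d(p,q) = \|p-q\|_\infty \geq |p_\alpha - q_\alpha|$ directly. When $p,q$ lie in different pieces, for every $A \subset \Lambda$ the triangle inequality in $\mathbb{R}$ gives $d(p,e_A) + d(e_A,q) \geq |p_\alpha - (e_A)_\alpha| + |(e_A)_\alpha - q_\alpha| \geq |p_\alpha - q_\alpha|$, and taking the infimum over $A$ yields $d(p,q) \geq |p_\alpha - q_\alpha|$. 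This establishes continuity of $\pi_\alpha$ uniformly across all components, which is precisely what makes the argument insensitive to how an arc might weave between the various $\widehat{M}_\gamma$ and vertices.

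With this in hand I would argue by contradiction. Suppose $e_A$ and $e_B$ were arc-connected by an arc $F \colon [0,1] \to M$ with $F(0) = e_A$ and $F(1) = e_B$. Then $g = \pi_\alpha \circ F \colon [0,1] \to [0,1]$ is continuous with $\{g(0), g(1)\} = \{0,1\}$. By the intermediate value theorem there is $t^* \in (0,1)$ with $g(t^*) = 1/2$; that is, the point $F(t^*) \in M$ has $\alpha$-coordinate equal to $1/2$. This contradicts the construction of $M$, since every vertex has coordinates in $\{0,1\}$, and every non-vertex point $(p,\gamma)$ satisfies $p_\alpha \in [0, 1/2 - \gamma_\alpha] \cup [1/2 + \gamma_\alpha, 1]$ with $\gamma_\alpha > 0$. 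Hence no arc connects $e_A$ and $e_B$, so they lie in distinct arc-connected components.

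I expect the only delicate point to be the uniform $1$-Lipschitz estimate for $\pi_\alpha$ across the infimum-defined part of the metric; once that is settled, the intermediate value theorem does all the remaining work, and Lemma \ref{ArcconnectingdifferentMgammasgoesthroughvertex} is not even required. An alternative route would use that lemma to force any connecting arc through an intermediate vertex and then iterate, but the coordinate-projection argument is more direct and avoids all case analysis on the intervening vertices.
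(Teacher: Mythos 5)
Your proof is correct, and it takes a genuinely different and more streamlined route than the paper's. The paper argues by first reducing to an arc whose interior lies in a single piece: it takes $a_0$ the last time the arc visits $e_A$ and $b_0$ the first time it visits $e_B$, invokes Lemma \ref{ArcconnectingdifferentMgammasgoesthroughvertex} together with a minimality argument to produce a possibly different target vertex $e_{B^*}\neq e_A$ so that the arc between them stays in one $\widehat{M}_{\gamma_0}$, and only then applies the intermediate value theorem to the coordinate projection restricted to $\widehat{M}_{\gamma_0}\cup\{e_C\colon C\subset\Lambda\}$. You bypass all of this by observing that the coordinate projection $\pi_\alpha$ is $1$-Lipschitz \emph{globally} on $M$: within a piece this is immediate from the sup-metric, and across pieces the estimate $d(p,e_C)+d(e_C,q)\geq |p_\alpha-(e_C)_\alpha|+|(e_C)_\alpha-q_\alpha|\geq|p_\alpha-q_\alpha|$ holds uniformly in $C$, so it survives the infimum defining $d$. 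With global continuity of $\pi_\alpha$ in hand, the intermediate value theorem applies directly to $\pi_\alpha\circ F$, and the forbidden value $1/2$ (which the paper itself notes no point of $M$ attains in any coordinate) finishes the argument with no case analysis on how the arc weaves between pieces and no need for Lemma \ref{ArcconnectingdifferentMgammasgoesthroughvertex} at all. What your approach buys is brevity and robustness: the delicate reduction step in the paper (choosing $b_0^*$ by a minimum and checking $B^*\neq A$ via maximality of $a_0$) disappears entirely, and the global Lipschitz estimate for $\pi_\alpha$ also makes the subsequent explicit description of the components $C_A$ transparent. What the paper's route offers in exchange is the structural Lemma \ref{ArcconnectingdifferentMgammasgoesthroughvertex} itself, which records that any arc between different pieces must pass through a vertex, though that fact is not reused elsewhere in the paper.
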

\begin{proof}
Suppose there is an arc $F\colon [0,1]\rightarrow M$ with $F(0)=e_A$ and $F(1)=e_B$. Consider the following points:

\begin{align*}
    a_0 &= \max\{t\in [0,1]\colon F(t)=e_A\}\\
    b_0 &= \min\{t\in [0,1]\colon F(t)=e_B\}\\
\end{align*}
which exist by continuity of $F$. There are two possibilities: either there exists $\gamma_0\in \Gamma$ such that $F(a_0,b_0)\subset \widehat{M}_{\gamma_0}$, or there exist $t_1<t_2\in (a_0,b_0)$ and $\gamma_1\neq \gamma_2\in \Gamma$ such that $F(t_1)\in \widehat{M}_{\gamma_1}$ and $F(t_2)\in \widehat{M}_{\gamma_2}$. 

Notice that in the second case, the restriction of $F$ to $[t_1,t_2]$ forms an arc between $F(t_1)$ and $F(t_2)$, so by the previous lemma there is $r\in (t_1,t_2)$ and $C\subset \Lambda$ such that $F(r)=e_{C}$. Taking the minimum over all such $r\in (a_0,b_0)$ yields a point $b_0^*\in (a_0,b_0)$ and $B^*\subset \Lambda$ such that $F(b_0^*)=e_{B^*}$ and $F(a_0,b_0^*)$ is contained in a $\widehat{M}_{\gamma^*_0}$ for some $\gamma^*_0\in \Gamma$. Moreover, $B^*\neq A$ by maximality of $a_0$.

In either case, without loss of generality we can assume that $F(0,1)\subset \widehat{M}_{\gamma_0}$ for some $\gamma_0\in \Gamma$. Since $A\neq B$, we may assume without loss of generality that there exists an $\alpha\in B\setminus A$. Then $(e_A)_\alpha=0$ and $(e_B)_\alpha=1$. For each $\alpha\in \Lambda$, the projection $P^{\gamma_0}_\alpha\colon \widehat{M}_{\gamma_0}\cup\{e_A\colon A\subset \Lambda\}\rightarrow [0,1]$ given by $P_\alpha(p)=p_\alpha$, is a continuous map. Therefore, the composition map $\overline{F}=P_\alpha\circ F\colon [0,1]\rightarrow[0,1]$ is continuous too, and verifies that $\overline{F}(0)=0$ and $\overline{F}(1)=1$. Therefore, there exists $t^*\in (0,1)$ such that $\overline{F}(t^*)=1/2$. However, this means that the $\alpha$-th coordinate of $F(t^*)\in \widehat{M}_{\gamma^*}$ is equal to $1/2$, which is a contraction. 
\end{proof}
Thanks to this last result, we can properly define for each $A\subset \Lambda$ the arc-connected component $C_A$ to be the arc-connected component of $M$ that contains the vertex $e_A$. Moreover, given a point $p\in \widehat{M}_\gamma$ for some $\gamma\in \Gamma$, it is straightforward to see that there exists a (unique) vertex $e_A$ such that $p\in C_A$. Indeed, we have that 

$$C_A=\{(p,\gamma)\in M\colon |p_\alpha-(e_A)_\alpha|<1/2,~\forall \alpha\in\Lambda,~\gamma\in\Gamma\}\cup\{e_A\}. $$

For convenience, we write $C_{\{\alpha\}}=C_\alpha$ for every $\alpha\in\Lambda$, and $C_\emptyset= C_0$.
\subsection{Non-existence of Lipschitz retracts of cardinality $\Lambda$ containing a set of vertices}
We now prove that certain subsets of $M$ with density character $\Lambda$ are not contained in any subset of the same density character which is a Lipschitz retract of $M$. 

\begin{theorem}
\label{Proofofcounterexampleforarbitratydensities}
Let $\Lambda$ be an infinite cardinal. There exists a metric space $M$ and a subspace $N\subset M$ with density character $\Lambda$ such that every intermediate subset containing $N$ with density character $\Lambda$ is not a Lipschitz retract of $M$.
\end{theorem}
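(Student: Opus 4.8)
The plan is to take $N=\{0\}\cup\{e_\alpha:\alpha\in\Lambda\}$, a $1$-separated set of cardinality $\Lambda$, so that $\text{dens}(N)=\Lambda$. Fix any intermediate $S$ with $N\subseteq S$ and $\text{dens}(S)=\Lambda$, and suppose towards a contradiction that there is a $K$-Lipschitz retraction $R\colon M\to S$ (necessarily $K\geq 1$, since $R$ fixes $S$ pointwise and $|S|\geq 2$). The first step is a counting observation. Each set $\widehat{M}_\gamma$ is open in $M$: a non-vertex $p\in\widehat{M}_\gamma$ has a coordinate $p_\alpha\notin\{0,1\}$, and putting $2\varepsilon=\min\{p_\alpha,1-p_\alpha\}>0$, the definition of $d$ forces every point within distance $\varepsilon$ of $p$ to lie in $\widehat{M}_\gamma$. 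Hence, taking a dense subset of $S$ of cardinality $\Lambda$, the copies $\widehat{M}_\gamma$ in which $S$ has a non-vertex point are exactly those hit by this dense set; there are at most $\Lambda$ of them, say those indexed by a set $I$ with $|I|\leq\Lambda$.

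Next I would construct the test sequence $\gamma^*$ by diagonalising against $I$. Fix an injection $i\mapsto\alpha_i$ of $I$ into the coordinate set $\Lambda$, set $\gamma^*_{\alpha_i}=\gamma^i_{\alpha_i}/(2K)$ for $i\in I$, and let every remaining coordinate of $\gamma^*$ equal some fixed value in $(0,1/(4K))$. Then $\gamma^*\in\Gamma$, every coordinate of $\gamma^*$ is strictly below $1/(4K)$, and $\gamma^*$ differs from each $\gamma^i$ in the coordinate $\alpha_i$; in particular $\gamma^*\notin\{\gamma^i\}_{i\in I}$, so $S\cap\widehat{M}_{\gamma^*}=\emptyset$. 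The restriction $R_0=R|_{M_{\gamma^*}}\colon M_{\gamma^*}\to S$ is then a $K$-Lipschitz map fixing $0$ and every $e_\alpha$, whose image meets only the copies indexed by $I$.

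The geometric heart is the following. Let $m\in M_{\gamma^*}$ be the point with all coordinates $1/2-\gamma^*_\alpha$, and for each $\alpha$ let $m^\alpha$ be obtained from $m$ by raising the $\alpha$-th coordinate to $1/2+\gamma^*_\alpha$. Shrinking the coordinates of $m$ to $0$, respectively raising the $\alpha$-th coordinate of $m^\alpha$ to $1$ while lowering the others to $0$, gives straight-line arcs inside $M_{\gamma^*}$ showing $m\in C_0$ and $m^\alpha\in C_\alpha$; by continuity of $R_0$ and $R_0(0)=0$, $R_0(e_\alpha)=e_\alpha$ we get $R_0(m)\in C_0$ and $R_0(m^\alpha)\in C_\alpha$, while $d(m,m^\alpha)=2\gamma^*_\alpha$. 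The key rigidity lemma I would isolate is that \emph{if $u\in C_0$ and $w\in C_\alpha$ lie in different copies, or if either is a vertex, then $d(u,w)\geq 1/2$}. Indeed, from $d(u,w)=\inf_B\big(d(u,e_B)+d(e_B,w)\big)$, the $\alpha$-th coordinate of any vertex $e_B$ is $0$ or $1$, and since $u'_\alpha<1/2<w'_\alpha$ one of the two summands exceeds $1/2$; the vertex cases are handled by the same coordinate estimate. Because $d(R_0(m),R_0(m^\alpha))\leq 2K\gamma^*_\alpha<1/2$, this lemma pins $R_0(m)$ and \emph{every} $R_0(m^\alpha)$ to be non-vertices in one single copy $\widehat{M}_{\gamma^{m_0}}$ with $m_0\in I$; comparing $\alpha$-th coordinates inside that copy yields $2\gamma^{m_0}_\alpha\leq d(R_0(m),R_0(m^\alpha))\leq 2K\gamma^*_\alpha$, that is $\gamma^{m_0}_\alpha\leq K\gamma^*_\alpha$ for \emph{every} $\alpha\in\Lambda$.

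This is the contradiction: for the single index $m_0\in I$ the diagonal choice gives $\gamma^{m_0}_{\alpha_{m_0}}=2K\gamma^*_{\alpha_{m_0}}>K\gamma^*_{\alpha_{m_0}}$, violating the inequality just obtained at $\alpha=\alpha_{m_0}$. The step I expect to be the crux is precisely this rigidity lemma, together with the realisation that a \emph{single} base point $m$ suffices: since each one-coordinate perturbation $m^\alpha$ is forced into the same copy as $R_0(m)$, one copy $\gamma^{m_0}$ must satisfy $\gamma^{m_0}_\alpha\leq K\gamma^*_\alpha$ simultaneously in all $\Lambda$ coordinates, which is exactly what the diagonal construction defeats. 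This is where the high-dimensional $\ell_\infty$ geometry does the work that required the delicate ``projection onto a single copy'' analysis in Section 3; here the vertex-routing lower bound replaces it entirely.
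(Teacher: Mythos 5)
Your proof is correct and follows essentially the same route as the paper's: the same space $M$, the same set $N=\{0\}\cup\{e_\alpha\}_{\alpha\in\Lambda}$, the same diagonal choice $\gamma^*$ scaled by $(2K)^{-1}$, the same corner point (the paper's $p^*$ is your $m$) with its one-coordinate perturbations (the paper's $q^*$ is your $m^{\alpha}$), preservation of arc-components under the retraction, and the same coordinate-gap contradiction. The only difference is organizational: your rigidity lemma treats the vertex case and the different-copies case uniformly and yields the inequality $\gamma^{m_0}_\alpha\leq K\gamma^*_\alpha$ for all $\alpha$ at once, whereas the paper splits into the cases $F(p^*)=0$ and $F(p^*)\in C_0\cap\widehat{M}_{\gamma^{\beta_0}}$ and tests only the single coordinate $\beta_0$ matching the copy that $F(p^*)$ lands in.
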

\begin{proof}
Let $M$ be the metric space we have defined in this section associated with the cardinal $\Lambda$. Put 
$$N= \bigg(\bigcup_{\alpha\in\Lambda}e_{\alpha}\bigg)\cup \{0\}, $$
which clearly verifies $\text{dens}(N)=\Lambda$. Let $S$ be a subset of $M$ such that $A\subset S$ and $\text{dens}(S)=\Lambda$, and let $K\geq 1$. We are going to prove that $S$ is not a $K$-Lipschitz retract of $M$. 

Since $\text{dens}(S)=\Lambda$, there exists a subset $\Gamma'\subset \Gamma$ with $\text{card}(\Gamma')=\Lambda$ such that 
$$S\subset \bigg(\bigcup_{\gamma\in \Gamma'}\widehat{M}_{\gamma}\bigg)\cup \{e_A\colon A\subset \Lambda\}. $$

We write $\Gamma'=(\gamma^\beta)_{\beta\in \Lambda}$ and $\gamma^\beta=(\gamma^\beta_\alpha)_{\alpha\in\Lambda}$ for $\beta\in \Lambda$.

Define $\gamma^*\in \Gamma$ as follows: $\gamma^*_\alpha = (2K)^{-1}\gamma^\alpha_\alpha$ for each $\alpha\in\Lambda$. We obtain directly that $\gamma^*\notin\Gamma'$. Now, suppose that $F\colon M\rightarrow S$ is a $K$-Lipschitz retraction onto $S$. The image of an arc-connected set under a continuous function is still arc-connected, so in particular, $F(C_\alpha)\subset C_\alpha$, since we know that $F(e_\alpha)=e_\alpha$ for every $\alpha\in\Lambda$. By the same argument, we have that $F(C_0)\subset C_0$. 

Consider now the point $p^*=\bigg(\big(1/2-\gamma^*_\alpha\big)_{\alpha\in \Lambda},\gamma^*\bigg)\in \widehat{M}_{\gamma^*}$. Since each coordinate of $p^*$ is less than $1/2$, we have that $p^*\in C_0$, which in turn implies that $F(p^*)\in C_0$. There are two possibilities: either $F(p^*)=0$, or there exists a $\beta_0\in\Lambda$ such that $F(p^*)\in C_0\cap \widehat{M}_{\gamma^{\beta_0}}$.

Suppose first that $F(p^*)=0$. Choose any $\beta_0\in \Lambda$. The point $q^*=\big(q,\gamma^*\big)\in \widehat{M}_{\gamma^*}$ defined by $q_\alpha= 1/2-\gamma^*_\alpha$ if $\alpha\neq \beta_0$ and $q_{\beta_0}=1/2+\gamma^*_{\beta_0}$ verifies $q^*\in C_{\beta_0}$, so $F(q^*)\subset C_{\beta_0}\cap S$. However, now we have that $d(p^*,q^*)=2\gamma^*_{\beta_0}<K^{-1}\gamma^{\beta_0}_{\beta_0}<K^{-1}1/2$, and on the other hand

$$d(F(p^*),F(q^*))> d(0,F(q^*))>1/2,$$
which contradicts the fact that $F$ is $K$-Lipschitz.

Suppose now that there exists a $\beta_0\in \Lambda$ such that $F(p^*)\in C_0\cap \widehat{M}_{\gamma^{\beta_0}}$. Consider, as in the other case, the point $q^*=\big(q,\gamma^*\big)\in \widehat{M}_{\gamma^*}$ defined by $q_\alpha= 1/2-\gamma^*_\alpha$ if $\alpha\neq \beta_0$ and $q_{\beta_0}=1/2+\gamma^*_{\beta_0}$. Similarly, we have that $q^*\in C_{\beta_0}$, and $d(p^*,q^*)<K^{-1}\gamma^{\beta_0}_{\beta_0}$. Since $F(q^*)\in C_{\beta_0}\cap S$ and $F(p^*)\in \widehat{M}_{\gamma^{\beta_0}}$, we have that the distance between $F(p^*)$ and $F(q^*)$ is bigger than the distance from $F(p^*)$ to $C_{\beta_0}\cap \widehat{M}_{\gamma^{\beta_0}}$. Looking at the coordinate $\beta_0$ of $p^*$, we have that $\big(F(p^*)\big)_{\beta_0}<1/2-\gamma^{\beta_0}_{\beta_0}$ and the coordinate $\beta_0$ of any point in $C_{\beta_0}\cap \widehat{M}_{\gamma^{\beta_0}}$ is bigger than $1/2+\gamma^{\beta_0}_{\beta_0}$. Therefore, we obtain that

$$2\gamma^{\beta_0}_{\beta_0}<d\big(F(p^*),F(q^*)\big)<Kd(p^*,q^*)<\gamma^{\beta_0}_{\beta_0}, $$
a contradiction.
\end{proof}

\begin{remark}
This example generalizes the one in section 3 for arbitrary infinite cardinals, and its construction is considerably simpler. However, we were not able to prove that this example fails the Lipschitz RP as strongly as the example in section 3. Namely, we do not know whether a finite set can be contained in a Lipschitz retract of density character $\Lambda$.
Moreover, the first example is bi-Lipschitz equivalent to a subset of a (nonseparable) Hilbert space.  
\end{remark}
\section{Local Complementation in metric spaces}

Lemma \ref{LemmaLindenstraussLip} and Theorem \ref{analogoflocalcomplementation} in this section are the
metric analogues to Lemma 1 in \cite{Lin66} and the main Theorem in \cite{SimYos89} respectively. The result in \cite{SimYos89} states that in every Banach space $X$, each subspace $Y$ is contained in a subspace $Z$ with the same density character and such that there exists a linear norm $1$ Hahn-Banach extension operator from $Z^*$ to $X^*$ (or equivalently, $Z$ is $1$-locally complemented in $X$). We are going to obtain an analogous result for linear extensions of Lipschitz functions on metric spaces. Note however, that this result was originally proven by Heinrich and Mankiewicz using Model Theory in \cite{HeiMan82}. 

We prove that in every metric space $M$, each subset $N$ is contained in an intermediate subset $S$ with $\text{dens}(N)=\text{dens}(S)$ such that $S$ admits a linear extension operator $T\colon \text{Lip}_0(S)\rightarrow\text{Lip}_0(M)$ with $\|T\|\leq 1$. Although the statement of the results is analogous and the proof is based on the proof of the result for Banach spaces, there are several differences in the argument we write here.

The main difference between the linear and the metric settings is due to the following Lemma.
\begin{lemma}
\label{LemmaLindenstraussLip}
Let $M$ be a bounded complete metric space. Let $F\subset M$ be a finite subset of $M$, and let $k\in\mathbb{N}$ and $0<\varepsilon\leq\text{inf}_{p\neq q\in F} d(p,q)$ be given. Then there exists a finite subset $Z\subset M$ with $F\subset Z$ such that for every $\varepsilon$-separated subset $E\subset M$ with $F\subset E$ and $\text{card}(E\setminus F)\leq k$ there is a Lipschitz map $L\colon E\rightarrow Z$ with $\|L\|_\text{Lip}\leq 1+\varepsilon$ and $T(f)=f$ for all $f\in F$.
\end{lemma}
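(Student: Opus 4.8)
The plan is to prove this by a total-boundedness (covering) argument on the space of finite distance-configurations, using the $\varepsilon$-separation hypothesis to compensate for the absence of the linear (homogeneity) structure that drives Lindenstrauss's original proof. Write $D=\operatorname{diam}(M)<\infty$. For each $m$ with $1\le m\le k$, consider the configuration map $\Phi_m\colon M^m\to[0,D]^{N_m}$, where $N_m=\binom{m}{2}+m\cdot\operatorname{card}(F)$, which sends a tuple $(x_1,\dots,x_m)$ to the vector recording all mutual distances $d(x_i,x_j)$ together with all distances $d(x_i,f)$ for $f\in F$. Since the target cube is compact, each image $\Phi_m(M^m)$ is totally bounded.

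Set $\delta=\varepsilon^2$. First I would choose, for each $m\le k$, a finite $\delta$-net of $\Phi_m(M^m)$ lying inside the image, and for every net point select one preimage tuple $(z^{(m,l)}_1,\dots,z^{(m,l)}_m)\in M^m$. Let $Z$ be the union of $F$ together with all the (finitely many) points $z^{(m,l)}_i$ occurring in these representative tuples; then $Z$ is a finite set containing $F$ and depending only on $F$, $k$ and $\varepsilon$, as required. The key feature is that every realizable $m$-point configuration is matched, coordinatewise up to $\delta$, by the configuration of one of the fixed representative tuples taken from $Z$.

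Given an admissible $E=F\cup\{x_1,\dots,x_m\}$ with $m\le k$, note first that $F$ is itself $\varepsilon$-separated (by $\varepsilon\le\inf_{p\ne q\in F}d(p,q)$), so all of $E$ is $\varepsilon$-separated. I would pick a representative tuple $(z_1,\dots,z_m)$ whose configuration lies within $\delta$ of $\Phi_m(x_1,\dots,x_m)$, and define $L\colon E\to Z$ by $L|_F=\operatorname{id}_F$ and $L(x_i)=z_i$; this lands in $Z$ and fixes $F$ pointwise. The only substantive step is the Lipschitz estimate. On pairs inside $F$ the map is an isometry, while for the other pairs the choice of representative gives $|d(z_i,z_j)-d(x_i,x_j)|\le\delta$ and $|d(z_i,f)-d(x_i,f)|\le\delta$, whence $d(L(x_i),L(x_j))=d(z_i,z_j)\le d(x_i,x_j)+\delta$, and similarly for the pairs $(f,x_i)$.

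The main obstacle, and the place where the metric argument genuinely differs from the Banach-space one, is converting these additive errors into the multiplicative bound $1+\varepsilon$: an additive perturbation $\delta$ cannot be absorbed into a factor $(1+\varepsilon)$ without a uniform lower bound on the distances being perturbed. This is exactly what $\varepsilon$-separation supplies, since every distance occurring in $E$ is at least $\varepsilon$, so that $\delta=\varepsilon^2\le\varepsilon\,d(x_i,x_j)$ yields $d(x_i,x_j)+\delta\le(1+\varepsilon)d(x_i,x_j)$, and the same estimate holds for the pairs $(f,x_i)$. Hence $\|L\|_{\text{Lip}}\le 1+\varepsilon$, completing the argument. (Completeness of $M$ is not needed for this finite statement; it only enters the subsequent transfinite construction.)
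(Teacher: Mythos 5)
Your proof is correct and follows essentially the same route as the paper's: both encode each admissible set by its vector of mutual distances, use total boundedness of the configuration space to extract a finite $\varepsilon^2$-net, map the new points of $E$ to the corresponding points of a nearby representative tuple (fixing $F$), and invoke the $\varepsilon$-separation of $E$ to convert the additive error $\varepsilon^2$ into the multiplicative bound $1+\varepsilon$. The only immaterial differences are that you take one net per cardinality $m\le k$ rather than the paper's single compact disjoint union of balls, and your representative tuples are arbitrary points of $M^m$ rather than configurations of admissible sets.
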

\begin{proof}
Write $R=\text{diam}(M)$ and $F=\{f_1,\dots,f_n\}$. We may assume that $\varepsilon<1$. Consider $E\subset M$ a $\varepsilon$-separated subset with $F\subset E$ and $\text{card}(E\setminus F)\leq k$. We can write this set as $E=\{f_1,\dots,f_n,p^E_1,\dots,p^E_{l_E}\}$ with $l_E\leq k$. Consider now the real valued vector:

$$a_E=(d(f_1,p^E_1),\dots,d(f_1,p^E_{l_E}),\dots,d(p^E_{l_E},p^E_1),\dots,d(p^E_{l_E},p^E_{l_E}))\in \mathbb{R}^{(n+l_E)l_E}. $$
Since $M$ has diameter $R<\infty$, the point $a_E$ belongs to $RB_{\ell_\infty^{(n+l_E)l_E}}$. Hence, if we set 

$$C=\bigsqcup_{l=1}^k RB_{\ell_\infty^{(n+l)l}}, $$
that is, the disjoint union of $RB_{\ell_\infty^{(n+l)l}}$ for $l=1,\dots,k$, then for every set $E\subset M$ with $F\subset E$ and $\text{card}(E\setminus F)\leq k$, the vector $a_E$ belongs to $C$. Since we are working with a finite disjoint union, we can endow $C$ with a metric $d_\infty$ such that $C$ is compact, the restriction of this metric to each $RB_{\ell_\infty^{(n+l)l}}$ coincides with the metric given by the supremum norm, and each $RB_{\ell_\infty^{(n+l)l}}$ is separated at least by $\varepsilon$ from its complementary in $C$. Since $C$ is compact, the subset 

$$ A_F=\{a_E\in C\colon F\subset E \text{ and card}(E\setminus F)\leq k\}\subset C$$
is totally bounded in $C$. Hence, given $\varepsilon>0$ there exist $\{E_1,\dots, E_s\}$ with $F\subset E_j$ and $\text{card}(E_j\setminus F)\leq k$ such that $A_F=\bigcup_{j=1}^s B_{\infty}(a_{E_j},\varepsilon^2)$. Set $Z=\bigcup_{j=1}^s E_j$. Let us prove that $Z$ verifies the thesis of the Lemma.

Clearly, $Z$ is finite and contains $F$. Consider any $\varepsilon$-separated subset $E\subset M$ with $F\subset E$ and $\text{card}(E\setminus F)\leq k$. There exists a $j_0\in \{1,\dots,j\}$ such that $d_\infty(a_{E},a_{E_j})\leq \varepsilon^2$ and $E_j\subset F$. Moreover, since $a_E$ and $a_{E_j}$ are closer than $\varepsilon$, they must belong to the same ball $RB_{\ell_{\infty}^{(n+l_0)l_0}}$, so $d_\infty(a_{E},a_{E_j})=\|a_{E}-a_{E_j}\|_\infty\leq \varepsilon^2$, and  $\text{card}(E_j)=\text{card}(E)=n+l_0$. Thus, we can write $E=\{f_1,\dots,f_n,p^E_1,\dots,p^E_{l_0}\}$ and $E_{j_0}=\{f_1,\dots,f_n,p^{E_{j_0}}_1,\dots,p^{E_{j_0}}_{l_0}\}$. 

Define now $L\colon E\rightarrow Z$ by $L(f)=f$ if $f\in F$, and $L(p^E_i)=p_i^{E_{j_0}}$ for $i=1,\dots,l_0$. The map $L$ verifies $L(f)=f$ for all $f\in F$ by definition, so it only remains to check that it is $(1+\varepsilon)$-Lipschitz. Since $L$ is the identity on $F$, it is sufficient to check the Lipschitz constant for pairs of points $x,y\in E$ where $x\notin F$. Then $x=p^E_{i_1}$ for some $1\leq i_1\leq l_0$. If $y=p^E_{i_2}$ for some $1\leq i_2\leq l_0$, then 

\begin{align*}
    d(L(x),L(y))&=d(p^{E_{j_0}}_{i_1},p^{E_{j_0}}_{i_2})-d(p^E_{i_1},p^E_{i_2})+d(p^E_{i_1},p^E_{i_2})\\
    &\leq \|a_{E_{j_0}}-a_E\|_{\infty}+ d(p^E_{i_1},p^E_{i_2})\leq \varepsilon\varepsilon+d(p^E_{i_1},p^E_{i_2})\leq (1+\varepsilon)d(x,y),
\end{align*}
using the fact that $E$ is $\varepsilon$-separated. If $y\in F$, then the inequality is proved similarly. We conclude that $\|L\|_\text{Lip}\leq 1+\varepsilon$, and the proof is complete.
\end{proof}

There are two main differences between this previous lemma and its analogous in \cite{Lin66} for Banach spaces. The first is that we have restricted ourselves to bounded metric spaces, and the second is that we need the set $E$ to be $\varepsilon$-separated for some $\varepsilon>0$. The boundedness problem, although it has an effect on the construction of the linear extensions in the proof of Theorem \ref{analogoflocalcomplementation} for unbounded metric spaces, is easy to work around as we will see. However, to solve the separation issue we need to alter the construction in a more meaningful way: instead of defining linear extensions to the whole metric space $M$, we will extend functions to some dense subset of $M$ that has certain separation properties. The dense subset we will use is well defined thanks to the following lemma.

\begin{lemma}
\label{denseseparatedsubsets}
Let $M$ be a complete metric space and $(F_n)_{n=1}^\infty$ be a sequence of finite subsets of $M$ with $F_n\subset F_{n+1}$ for all $n\in\mathbb{N}$, and let $(\varepsilon_n)_{n=1}^\infty$ be a decreasing sequence of positive real numbers such that $\varepsilon_n<\inf_{p\neq q\in F_n}d(p,q)$. Then there exists a sequence of sets $(D_n)_{n=1}^\infty$ with the following properties:
\begin{itemize}
    \item[(i)] $D_n\subset D_{n+1}$ for all $n\in\mathbb{N}$,
    \item[(ii)] $F_n\subset D_n$ for all $n\in\mathbb{N}$,
    \item[(iii)] $D_n\cup F_{n+k}$ is $\varepsilon_{n+k}$-separated for all $n\in\mathbb{N}$ and $k\geq 0$,
    \item[(iv)] $D=\bigcup_{n\in\mathbb{N}}D_n$ is dense in $M$,
\end{itemize}
\end{lemma}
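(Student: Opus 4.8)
The plan is to build the sets $D_n$ one level at a time by a maximality (Zorn) argument, where the constraint imposed at each level is exactly strong enough to force property (iii) for \emph{all} future indices at once. The natural constraint to isolate is the following notion of compatibility: say that a point $x\in M$ is \emph{admissible at level $n$} if $d(x,f)\ge \varepsilon_m$ for every $m\ge n$ and every $f\in F_m$ with $f\ne x$. Two structural observations drive everything. First, admissibility at level $n$ implies admissibility at level $n+1$, since the latter imposes fewer conditions (the set $\{m:m\ge n+1\}$ is contained in $\{m:m\ge n\}$). Second, every point of every $F_n$ is admissible at level $n$: indeed the hypothesis $\varepsilon_m<\inf_{p\ne q\in F_m}d(p,q)$ makes each $F_m$ strictly $\varepsilon_m$-separated, so for $f\in F_n\subset F_m$ one has $d(f,f')>\varepsilon_m$ for all $f'\in F_m$, $f'\ne f$.

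With this in hand I would define $D_n$ inductively. Let $D_1$ be a maximal subset of $M$ that contains $F_1$, is $\varepsilon_1$-separated, and consists of points admissible at level $1$; such a set exists by Zorn's lemma, because all three properties pass to unions of chains (separation being a pairwise condition and admissibility a condition on individual points). Given $D_n$, let $D_{n+1}$ be a maximal set containing $D_n\cup F_{n+1}$, $\varepsilon_{n+1}$-separated, and made of points admissible at level $n+1$. The thing to check for the induction to get started is that $D_n\cup F_{n+1}$ is itself a legitimate candidate: its points are admissible at level $n+1$ (those of $D_n$ by monotonicity of admissibility, those of $F_{n+1}$ by the second observation above), and it is $\varepsilon_{n+1}$-separated — pairs inside $D_n$ because $\varepsilon_{n+1}\le\varepsilon_n$, pairs inside $F_{n+1}$ by hypothesis, and a mixed pair $x\in D_n$, $f\in F_{n+1}$ with $x\ne f$ because $x$ is admissible at level $n$, applied with $m=n+1$.

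Properties (i) and (ii) are immediate from the construction. For (iii), fix $n$ and $k\ge0$ and take distinct $x,y\in D_n\cup F_{n+k}$: if both lie in $D_n$ then $d(x,y)\ge\varepsilon_n\ge\varepsilon_{n+k}$; if both lie in $F_{n+k}$ the separation is given; and if $x\in D_n\setminus F_{n+k}$ and $y\in F_{n+k}$, then admissibility of $x$ at level $n$ with $m=n+k$ gives $d(x,y)\ge\varepsilon_{n+k}$. I expect the main obstacle to be property (iv). The mechanism is that admissibility \emph{deliberately keeps the points of $D$ away from the sets $F_m$}, so approximation must be argued in two regimes. Since $D\supset\bigcup_n F_n$, any $z\in\overline{\bigcup_n F_n}\subset\overline{D}$ needs no further approximation; whereas if $z$ has positive distance $c$ from $\bigcup_n F_n$, then for every $n$ with $\varepsilon_n<c$ the point $z$ is admissible at level $n$, so adding $z$ to $D_n$ would again be a legitimate candidate unless $z\in D_n$ or $d(z,D_n)<\varepsilon_n$; maximality of $D_n$ then forces the latter. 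Thus density of $D$ comes down to the $\varepsilon_n$ being eventually small (i.e.\ $\varepsilon_n\to 0$, which is the operative regime here and is forced as soon as $\bigcup_n F_n$ is dense in a space without isolated points), and the delicate point is precisely reconciling this smallness with the admissibility constraint that pushes $D$ off of $\bigcup_n F_n$.
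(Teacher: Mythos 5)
Your proof is correct and is essentially the paper's own argument: the paper also obtains $D_n$ by Zorn's lemma as a maximal element of the family of sets $D\supset D_{n-1}\cup F_n$ such that $D\cup F_{n+k}$ is $\varepsilon_{n+k}$-separated for every $k\geq 0$, and your two constraints ($\varepsilon_n$-separation plus pointwise admissibility at level $n$) are exactly an unpacking of that single condition, with the same verifications of (i)--(iii) and the same maximality-based density argument. As for the point where you hedge: you are right that (iv) requires $\varepsilon_n\to 0$, but this is not a defect of your argument relative to the paper's --- it is a hypothesis missing from the statement of the lemma, which the paper's proof also uses tacitly (choosing $n_0$ with $\varepsilon_n<\delta$ for all $n\geq n_0$ is impossible when $\inf_n\varepsilon_n>0$). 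Without that hypothesis the conclusion is genuinely false: take $M=[0,1]$, $F_n=\{0,1\}$ for all $n$, and $\varepsilon_n=\frac{1}{4}(1+2^{-n})$; the hypotheses of the lemma hold, yet properties (i) and (iii) force any two distinct points of $D$ to lie at distance greater than $\frac{1}{4}$, so $D$ is finite and cannot be dense. Since in the lemma's only application (Theorem \ref{analogoflocalcomplementation}) one has $\varepsilon_n\leq 1/n\to 0$, your proof, exactly like the paper's, establishes everything that is actually used.
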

\begin{proof}
Consider the family of sets 
$$A_1=\{D\subset M\colon F_1\subset D,~ D\cup F_{1+k} \text{ is }\varepsilon_{1+k}\text{-separated },\forall k\geq 0\}.$$
Since $(F_n)_{n=1}^\infty$ is increasing and $F_n$ is at least $\varepsilon_n$-separated, the set $F_1\in A_1$, so $A_1$ is non-empty. Consider now a chain of subsets $(C_\alpha)_{\alpha\in I}$ in $A_1$. If we define $C=\bigcup_{\alpha\in I}C_\alpha$, then $C_\alpha\subset C$ and $C\in A_1$, so it is an upper bound for the chain. By Zorn's Lemma, we can choose $D_1\in A_1$ to be maximal for the inclusion. 

Suppose we have defined $D_{n-1}$, then we define 

$$A_n=\{D\subset M\colon D_{n-1}\cup F_n\subset D,~ D\cup F_{n+k} \text{ is }\varepsilon_{n+k}\text{-separated },\forall k\geq 0\}.$$
Note that $D_{n-1}\cup F_n\in A_n$, so $A_n\neq\emptyset$. Arguing as before, we can choose a maximal set $D_n\in A_n$. Let $(D_n)_{n=1}^\infty$ be the sequence of sets obtained by this inductive process. Let us check that it verifies properties $(i)$-$(iv)$. The first three properties are clearly satisfied by definition of $A_n$. It only remains to check that $D=\bigcup_{n\in\mathbb{N}}D_n$ is dense in $M$. Suppose by contradiction that there exist an $x\in M$ and $\delta_0>0$ such that $D\cap B(x,\delta)=\emptyset$. Take an $n_0\in\mathbb{N}$ such that $\varepsilon_n<\delta$ for all $n\geq n_0$. Then clearly $D\cap B(x,\varepsilon_n)=\emptyset$ for all $n\geq n_0$, which in particular means that  $D_{n_0}\cap B(x,\varepsilon_{n_0})=\emptyset$ and $F_n\cap B(x,\varepsilon_n)=\emptyset$ for all $n\geq n_0$. But then $D_{n_0}\cup \{x\}\in A_{n_0}$, contradicting the maximality of $D_{n_0}$. We conclude that $D$ is dense, which finishes the proof.
\end{proof}

Now we can finally prove that every separable subset of a metric space is contained in an intermediate separable set that admits a linear and bounded extension of Lipschitz functions to the whole space.

\begin{theorem}
\label{analogoflocalcomplementation}
Let $M$ be a complete metric space, and let $N\subset M$ be a subset of $M$ with $0\in N$. Then there exists a subspace $S\subset M$ with $\text{dens}(N)=\text{dens} (S)$ and a linear extension operator $T\colon \text{Lip}_0(S)\rightarrow \text{Lip}_0(M)$ such that $\|T\|= 1$. In particular, every metric space has the Local CP$(\alpha,\alpha)$ for every infinite cardinal $\alpha$ smaller than the density character of $M$.
\end{theorem}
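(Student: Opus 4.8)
The plan is to follow the Sims--Yost scheme adapted to the Lipschitz setting: first I enlarge $N$ to a set $S$ of the same density that is self-improving with respect to the approximate extension maps of Lemma~\ref{LemmaLindenstraussLip}, and then I produce the exact norm-one operator $T$ as an ultralimit of the pullbacks these maps furnish. I may assume $\kappa=\text{dens}(N)$ is infinite.

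\emph{Construction of $S$.} Since Lemma~\ref{LemmaLindenstraussLip} requires a bounded space, I localize to the closed balls $B_n=\overline{B(0,n)}$, each bounded and complete. I set $S_0=N$ and build an increasing chain $S_0\subset S_1\subset\cdots$ of subsets of cardinality at most $\kappa$ as follows: at stage $m$, for every finite $F\subset S_m$, every $k\in\mathbb{N}$, every rational $\varepsilon$ with $0<\varepsilon\le\inf_{p\neq q\in F}d(p,q)$, and every $n\in\mathbb{N}$, I apply Lemma~\ref{LemmaLindenstraussLip} inside $B_n$ to obtain a finite compensating set $Z\subset B_n$, and adjoin all these sets $Z$ to form $S_{m+1}$. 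As $\kappa$ is infinite there are only $\kappa$ finite subsets of $S_m$ and countably many triples $(k,\varepsilon,n)$, so each stage adds at most $\kappa$ points; hence $S=\bigcup_m S_m$ satisfies $|S|\le\kappa$, and with $N\subset S$ this gives $\text{dens}(S)=\kappa$. By construction $S$ enjoys the closure property: for every finite $F\subset S$, every $k$, every admissible $\varepsilon$ and every $n$, there is a finite $Z\subset S\cap B_n$ such that every $\varepsilon$-separated $E\subset B_n$ with $F\subset E$ and $\text{card}(E\setminus F)\le k$ admits a map $L\colon E\to Z$ with $\|L\|_\text{Lip}\le 1+\varepsilon$ and $L|_F=\text{id}$.

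\emph{A separated dense set.} I pick an increasing sequence $(F_n)$ of finite subsets of $S$ with $0\in F_1$ and $\overline{\bigcup_n F_n}\supset S$, together with a decreasing sequence $\varepsilon_n>0$ with $\varepsilon_n<\inf_{p\neq q\in F_n}d(p,q)$, so that $\varepsilon_n\to 0$. Feeding these into Lemma~\ref{denseseparatedsubsets} produces $D=\bigcup_n D_n$ dense in $M$, with $F_n\subset D_n$ and each $D_n$ being $\varepsilon_n$-separated. Thus every finite $E$ with $F_m\subset E\subset D_m$ is automatically $\varepsilon_m$-separated, which is exactly the hypothesis Lemma~\ref{LemmaLindenstraussLip} needs.

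\emph{The operator.} I direct the set $\Lambda$ of pairs $(E,m)$ with $E\subset D_m$ finite and $F_m\subset E$ by declaring $(E,m)\le(E',m')$ when $m\le m'$ and $E\subset E'$, and I fix an ultrafilter $\mathcal{U}$ on $\Lambda$ refining the order tails. For each $(E,m)$ I choose $n_E$ with $E\subset B_{n_E}$; the closure property then yields a map $L_{(E,m)}\colon E\to S$ with $\|L_{(E,m)}\|_\text{Lip}\le 1+\varepsilon_m$ and $L_{(E,m)}|_{F_m}=\text{id}$, chosen independently of any function $g$. For $g\in\text{Lip}_0(S)$ and $p\in D$ I put $T(g)(p)=\lim_{\mathcal{U}}g\bigl(L_{(E,m)}(p)\bigr)$; this limit exists because $\bigl|g(L_{(E,m)}(p))\bigr|\le\|g\|_\text{Lip}(1+\varepsilon_m)\,d(p,0)\le 2\|g\|_\text{Lip}\,d(p,0)$ stays bounded for fixed $p$, which is precisely the observation that makes the \emph{unbounded} case go through without change. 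Linearity of $T$ is inherited from linearity of ultralimits; $T(g)(0)=0$ since $L_{(E,m)}(0)=0$; and for $p,q\in D$ the bound $|T(g)(p)-T(g)(q)|\le\lim_{\mathcal{U}}\|g\|_\text{Lip}(1+\varepsilon_m)\,d(p,q)=\|g\|_\text{Lip}\,d(p,q)$ (using $\varepsilon_m\to 0$ along $\mathcal{U}$) shows $T(g)$ is $\|g\|_\text{Lip}$-Lipschitz on the dense set $D$, hence extends uniquely to $M$. For $s\in\bigcup_n F_n$ one has $L_{(E,m)}(s)=s$ eventually, so $T(g)(s)=g(s)$, and density forces $T(g)|_S=g$. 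Therefore $T$ is a linear extension operator with $\|T\|\le 1$, and since it is an extension operator $\|T\|=1$. Applying this to any $N\subset M$ with $\text{dens}(N)=\alpha$ yields the Local CP$(\alpha,\alpha)$. The delicate part of the argument is the bookkeeping reconciling three competing requirements: Lemma~\ref{LemmaLindenstraussLip} applies only to $\varepsilon$-separated test sets $E$, the fixed part $F_m$ must grow to a dense subset of $S$, and the factor $1+\varepsilon_m$ must be driven to $1$ in the limit. This is exactly why Lemma~\ref{denseseparatedsubsets}, coupling the separation constants $\varepsilon_n$ to the approximating sets $F_n$, is needed, and why localizing to the balls $B_n$ rather than working in all of $M$ at once is the right device for absorbing unboundedness.
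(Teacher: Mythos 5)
Your argument, as written, proves the theorem only when $\operatorname{dens}(N)=\aleph_0$, and the claim that the same construction handles every infinite $\kappa$ ``without change'' is where it breaks. The saturation construction of $S$ is fine for arbitrary $\kappa$ (modulo a small slip: $S_0=N$ may have \emph{cardinality} far larger than $\kappa=\operatorname{dens}(N)$, so you should start from a dense subset of $N$ of cardinality $\kappa$ and pass to closures at the end). But the next step, ``pick an increasing sequence $(F_n)$ of finite subsets of $S$ with $\overline{\bigcup_n F_n}\supset S$,'' is only possible when $S$ is separable: the union of countably many finite sets is countable, so its closure is separable. This is not a cosmetic issue, because the sequence $(F_n)$ is used twice in an essential way: Lemma \ref{denseseparatedsubsets} takes a \emph{sequence} of finite sets coupled to a decreasing sequence of separation constants, and your verification that $T$ is an extension operator (``for $s\in\bigcup_n F_n$ one has $L_{(E,m)}(s)=s$ eventually, and density forces $T(g)|_S=g$'') needs $\bigcup_n F_n$ to be dense in $S$. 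For uncountable $\kappa$ one would instead need a \emph{directed, uncountable} family of finite fixed sets together with a compatible family of separated dense sets, i.e.\ a genuine generalization of Lemma \ref{denseseparatedsubsets}, which you neither state nor prove. Since the theorem is asserted for arbitrary $N$, and the ``in particular'' clause about the Local CP$(\alpha,\alpha)$ needs every infinite $\alpha$, this is a real gap.

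The paper closes exactly this gap by a second, separate argument: a transfinite induction on $\lambda=\operatorname{dens}(N)$. Assuming the result for all infinite densities $<\lambda$, one builds an increasing transfinite chain of subsets $S_\alpha$ ($\omega_0\le\alpha<\lambda$), each containing the previous ones together with one more point of a dense $\lambda$-sequence in $N$, each admitting a norm-one extension operator $T_\alpha\colon \text{Lip}_0(S_\alpha)\rightarrow\text{Lip}_0(M)$ by the inductive hypothesis. Setting $S=\overline{\bigcup_\alpha S_\alpha}$ and $E_\alpha=T_\alpha R_\alpha$ (with $R_\alpha$ the restriction map $\text{Lip}_0(S)\rightarrow\text{Lip}_0(S_\alpha)$), one defines $E f=w^*\text{-}\lim_{U}E_\alpha f$ along a non-principal ultrafilter, using weak$^*$ compactness of bounded sets in $\text{Lip}_0(M)$; this is a norm-one linear extension operator for $S$. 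Your separable-case argument itself is sound and is essentially the paper's (your saturation construction of $S$ under Lemma \ref{LemmaLindenstraussLip} is a clean variant of the paper's chain $S_n$, and your index set of pairs $(E,m)$ with an ultrafilter refining the order tails plays the role of the paper's directed set $I$ and filter of cones; your handling of unboundedness via $|g(L(p))|\le 2\|g\|_\text{Lip}\,d(p,0)$ matches the paper's relative-compactness observation). To complete the proof you must add the transfinite induction step, or else genuinely extend Lemmas \ref{LemmaLindenstraussLip} and \ref{denseseparatedsubsets} to a directed uncountable setting.
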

\begin{proof}
We first assume that $N$ is separable.

We are going to find a linear extension operator $T\colon \text{Lip}_0(D\cap S)\rightarrow \text{Lip}_0(D)$ with $\|T\|=1$, where $D$ is a dense subset of $M$ such that $D\cap S$ is dense in $S$. This will suffice to prove the result, since it is known that we can extend linearly Lipschitz functions from dense subsets preserving the Lipschitz constant, and this extension is unique by continuity (check Proposition 1.6 in \cite{Wea18Book}, for instance).

Let $(p_n)_{n=1}^\infty$ be a dense sequence in $N$. For $n=0$, put $S_0=\{0\}$. Inductively, suppose we have defined $S_{n-1}$, which is finite. Put $F_n=S_{n-1}\cup\{p_n\}$ as a finite set, $\theta_{n}=\inf_{p\neq q\in F_{n}} d(p,q)$ as the separation of said set, and $r_n=\text{rad}(F_n)$ its radius. Set $\varepsilon_n=\min\{1/n,\theta_{n}\}$ and $R_n=\max\{r_n,n\}$. We choose $S_n$ to be the set $Z$ given by Lemma \ref{LemmaLindenstraussLip} applied to $M\cap B(0,R_n)$, which is bounded, with $F=F_n$, $k=n$ and $\varepsilon=\varepsilon_n$. Set $S=\overline{\bigcup_{n\in\mathbb{N}}S_{n}}$. Then clearly $S$ is separable and contains $N$.

Let $(D_n)_{n=1}^\infty$ be the increasing sequence of sets given by Lemma \ref{denseseparatedsubsets} applied to $(F_n)_{n=1}^\infty$ and $(\varepsilon_n)_{n=1}^\infty$. Notice that if $D=\bigcup_{n\in\mathbb{N}}D_n$, then $D$ is dense in $M$ by the Lemma, and $D\cap S$ is dense in $S$ because $S_n\subset D_{n+1}$ for all $n\in\mathbb{N}$.

Fix $n\in\mathbb{N}$, and define the family of subsets:
$$I_{n}=\{E\subset D_n\cap B(0,R_n)\colon F_n\subset E,~ E\text{ is }\varepsilon_{n}\text{-separated, and } \text{card}(E\setminus F_n)\leq n\}. $$
Note that if $E\subset D_n$, the condition that $E$ is $\varepsilon_n$-separated is redundant, but we state it for clarity. Indeed, now it is clear that if $E\in I_n$, then there exists a Lipschitz map $L_E\colon E\rightarrow S_{n}$ with $(L_E)_{|F_{n}}=\text{Id}_{F_{n}}$ and $\|L_E\|_\text{Lip}\leq 1+\varepsilon_n$. 

Let 
$$I=\bigcup_{n\in\mathbb{N}}I_{n}. $$
Notice that if $E_1\in I_{n_1}$, $E_2\in I_{n_2}$, then all three of $E_1,E_2$ and $F_{n_1+n_2}$ are contained in $D_{n_1+n_2}$. Hence $E_0=E_1\cup E_2\cup F_{n_1+n_2}$ is $\varepsilon_{n_1+n_2}$-separated, and moreover $E_0\setminus F_{n_1+n_2}\subset (E_1\setminus F_{n_1})\cup (E_2\setminus F_{n_2})$, which means that $\text{card}(E_0\setminus F_{n_1+n_2})\leq n_1+n_2$. Therefore $E_0\in I_{n_1+n_2}\subset I$. Thus $I$, with the order given by inclusion, is a directed set.

For a set $E\in I$, consider $I_{E}:=\{Z\in I\colon E\subset Z\}$, which is a subset of $I$. Since $I$ is directed, the family $B=\{I_E\}_{E\in I}$ is closed for finite intersections and $\emptyset\notin B$, so it is the subbase of a filter on $\mathcal{P}(I)$. Let $U$ be an ultrafilter that extends this filter. For any $p\in D$, there exists a minimum $n_p\in\mathbb{N}$ such that $p\in D_{n_p}\cap B(0,n_p)$, so the set $I_p:=\{Z\in I\colon p\in Z\}$ belongs to $B\subset U$, since it can be written as $I_p=I_{E_p}=\{Z\in I\colon E_p\subset Z\}$, where $E_p=\{p\}\cup F_{n_p}$, which is a member of $I_{n_p}\subset I$.

Also note that for every $n\in\mathbb{N}$, the set $I_n$ can be written as $I_{F_n}$, so $I_n\in U$ as well.

For each $E\in I$ define $n(E):=\max\{n\in\mathbb{N}\colon E\in I_{n}\}$ which exists since $E$ is finite. Since $E\in I_{n(E)}$, there exists a Lipschitz map $L_E\colon E\rightarrow S_{n(E)}$ with $(L_E)_{|F_{n(E)}}=\text{Id}_{F_{n(E)}}$ and $\|L_E\|_\text{Lip}\leq 1+\varepsilon_{n(E)}$. We can extend each $L_E$ to a non-Lipschitz function defined on the dense subset $D=\bigcup_{n\in\mathbb{N}}D_n$ by simply defining $\widetilde{L}_E\colon M\rightarrow S_{n(E)}$ as 
$$
\widetilde{L}_E(p)=
\begin{cases}
L_E(p),&\text{ if } p\in E,\\
0, &\text{ if }p\in D\setminus E.
\end{cases}
$$

Finally, the linear extension operator $T\colon \text{Lip}_0(S)\rightarrow \text{Lip}_0(D)$ is defined for each $f\in \text{Lip}_0(D\cap S)$ by:
$$(Tf)(p)=\lim_{U} f(\widetilde{L}_E(p)), \qquad p\in D.$$
This limit exists since $U$ is an ultrafilter and $\bigcup_{E\in I}f(\widetilde{L}_E(p))$ is relatively compact in $\mathbb{R}$ for all $p\in D$.

Let us check that $T$ is a linear extension operator with norm $1$. First, it follows that $T$ is linear by the linearity of the limit with respect to an ultrafilter. 

By the definition of limit with respect to an ultrafilter and by definition of $S$, to prove that $T$ is an extension operator from $\text{Lip}_0(D\cap S)$, it is enough to show that given $f\in\text{Lip}_0(S)$, $n_0\in\mathbb{N}$ and  $p_0\in S_{n_0}$, there is a set $I_{0}\in U$ such that $f(\widetilde{L}_E)(p)=f(p)$ for all $E\in I_{0}$. Consider $I_0=I_{F_{n_0+1}}\in U$, and note that $F_{n_0+1}\in I$ and verifies $S_{n_0}\subset F_{n_0+1}$. For every $E\in I_0$, we have that $F_{n_0+1}\subset E$, so $(L_E)_{|F_{n_0+1}}=\text{Id}_{F_{n_0+1}}$. In particular, $L_E(p)=p$ for all $p\in S_{n_0}$, so $f(\widetilde{L}_E)(p)=f(p)$ as desired. 

It only remains to prove that $\|T\|=1$. Again, it suffices to show that for any pair of points $x,y\in D$ and any $\delta>0$, there exists $I_1\in U$ such that for all $E\in I_1$, the inequality $|f(\widetilde{L}_E(x))-f(\widetilde{L}_E(y))|\leq (1+\delta)\|f\|_\text{Lip}d(x,y)$ holds. Consider $n_0\in\mathbb{N}$ such that $\varepsilon_{n_0}<\delta$, and set $I_1=I_x\cap I_y\cap I_{n_0}\in U$. If $E\in I_1$, then $x,y\in E$, so $\widetilde{L}_E(x)=L_E(x)$ and $\widetilde{L}_E(y)=L_E(y)$. Therefore, since $L_E$ is $(1+\varepsilon_{n_0})$-Lipschitz in $E$, we have that 

\begin{align*}
    |f(\widetilde{L}_E(x))-f(\widetilde{L}_E(y))|&=\|f\|_\text{Lip}|\widetilde{L}_E(x)-\widetilde{L}_E(y)|= \|f\|_\text{Lip}|L_E(x)-L_E(y)|\\
    &\leq (1+\varepsilon_{n_0})\|f\|_\text{Lip}d(x,y)\leq (1+\delta)\|f\|_\text{Lip}d(x,y),
\end{align*}
and the proof for the separable case is complete.

For the general case we use transfinite induction. Suppose that $N$ is nonseparable, let $\lambda=\text{dens}(N)$, and suppose that we have
proved the result for every cardinal $\alpha$ with $\omega_0\leq \alpha <\lambda$. Choose $\{p_\alpha\}_{\alpha< \lambda}$. Since $\{p_\alpha\}_{\alpha<\omega_0}$ is countable, there exists a separable subset $S_{\omega_0}\subset M$ with $p_\alpha\in S_{\omega_0}$ for all $\alpha<\omega_0$ and a norm $1$ linear extension operator $T_{\omega_0}\colon \text{Lip}_0(S_{\omega_0})\rightarrow\text{Lip}_0(M)$. 
Similarly, for $\omega_0<\alpha<\lambda$, we can find a subset $S_\alpha\subset M$ containing $\bigcup_{\omega_0<\beta<\alpha}S_\beta\cup \{p_\alpha\}$ with $\text{dens}(S_\alpha)\leq \alpha$ and a norm $1$ linear extension operator $T_{\omega_0}\colon \text{Lip}_0(S_{\omega_0})\rightarrow\text{Lip}_0(M)$. Set

$$S=\overline{\bigcup_{\omega_0<\alpha<\lambda}S_{\alpha}}. $$
We have that $N\subset S$ and $\text{dens}(S)=\lambda$. For any $\omega_0<\alpha<\lambda$, consider the linear map $R_\alpha\colon \text{Lip}_0(S)\rightarrow \text{Lip}_0(S_\alpha)$ given by the restriction to $S_\alpha$. We have  that $E_\alpha=T_\alpha R_\alpha\colon \text{Lip}_0(S)\rightarrow \text{Lip}_0(M)$ is a bounded linear map with $\|E_\alpha\|\leq 1$. Therefore, for any $f\in \text{Lip}_0(S)$, the set $\{E_\alpha f\colon \omega_0\leq \alpha<\lambda\}$ is bounded in $\text{Lip}_0(M)$, and thus relatively compact for the weak$^*$ topology. 

Let $U$ be a non-principal ultrafilter on $\{\alpha\colon \omega_0\leq \alpha<\mu\}$. Then we define $E\colon \text{Lip}_0(S)\rightarrow \text{Lip}_0(M)$ by 

$$Ef:=w^*\text{-}\lim_{U} E_\alpha f, $$
which is well defined by the discussed compactness and the fact that $U$ is an ultrafilter. It is straightforward to check that $E$ is a linear extension operator with $\|E\|\leq 1$. Note that $E$ can also be obtained as the weak$^*$ limit of a subnet of $E_\alpha$ using  the compactness of $B_{\text{Lip}_0(M)}$ in the weak$^*$ topology.
\end{proof}

As we mentioned in the introduction and as suggested by our definition of local complementation in metric spaces, the existence of a linear extension operator for Lipschitz functions in the metric space setting is analogous to the local complementation property in Banach spaces. We finish this section making this analogy clearer, summarizing the relationship between local complementation and linear extension operators in Banach spaces. 

Let $X$ be a Banach space and $\lambda\geq 1$. We say that a subspace $Z\subset X$ is \emph{$\lambda$-locally complemented in $X$} if for every finite-dimensional subspace $F\subset X$ and every $\varepsilon>0$ there exists a linear operator $T\colon F\rightarrow Y$ with $\|T\|\leq \lambda$ such that $\|Tf-f\|<\varepsilon\|f\|$ for all $f\in Y\cap F$. By the Principle of Local Reflexivity, every Banach space is $1$-locally complemented in its bidual.
There are several well known equivalent formulations of this property in the literature (see \cite{Kal84}, \cite{Fak72}, \cite{GonMar15}), some of which concern linear extensions of certain classes of functions. 

\begin{proposition}
\label{equivalencesoflocalcompl}
Let $X$ be a Banach space, $Z\subset X$ a linear subspace, and $\lambda\geq 1$. The following statements are equivalent:
\begin{itemize}
    \item[(1)] $Z$ is $\lambda$-locally complemented in $X$.
    \item[(2)] There exists a linear projection $P\colon X^*\rightarrow Z^{\perp}$ such that $\|\text{Id}_{X^*}-P\|\leq \lambda$.
    \item[(3)] $Z^{**}$ is $\lambda$-complemented in $X^{**}$ in its natural embedding.
    \item[(4)] $Z$ has the Compact Extension Property in $X$, i.e.: for every Banach space $Y$ and every linear compact operator $K\colon Z\rightarrow Y$, there exists a compact operator $\widehat{K}\colon X\rightarrow Y$ that extends $K$ and such that $\|\widehat{K}\|\leq \lambda\|K\|$. 
    \item[(5)] There exists a linear operator $T\colon X\rightarrow Z^{**}$ such that $\|T\|\leq \lambda$ and $T$ is the identity on $Z$.
    \item[(6)] There exists an ultrafilter $U$ and a linear operator $T\colon X\rightarrow (Z)_{U}$ such that $\|T\|\leq \lambda$ and $T$ is the identity on $Z$ (where $(Z)_U$ denotes the ultraproduct with respect to $U$ and identifying $Z$ with its natural copy in $(Z)_U$).
\end{itemize}
\end{proposition}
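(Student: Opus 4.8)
The plan is to run the cycle $(1)\Rightarrow(5)\Rightarrow(2)\Rightarrow(3)\Rightarrow(1)$, to graft $(6)$ onto it through $(1)\Rightarrow(6)\Rightarrow(5)$, and to graft $(4)$ onto it through $(5)\Rightarrow(4)$ together with the implication $(4)\Rightarrow(1)$, which is exactly Kalton's theorem quoted in the introduction. Most of these arrows are soft duality or weak$^*$-compactness arguments; the only analytically expensive inputs are the Principle of Local Reflexivity (used once) and Kalton's characterization of the Compact Extension Property.

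First I would prove $(1)\Rightarrow(5)$ by a weak$^*$-cluster-point construction. Direct the set of pairs $(F,\varepsilon)$, with $F\subset X$ finite-dimensional and $\varepsilon>0$, by reverse refinement, and fix an ultrafilter $U$ on it refining the tail filter. For each pair choose the map $T_{F,\varepsilon}\colon F\to Z$ provided by $(1)$ and set, for $x\in X$, $Tx=w^*\text{-}\lim_U \kappa_Z(T_{F,\varepsilon}x)$ (any admissible value when $x\notin F$); the limit exists in $\lambda\|x\|B_{Z^{**}}$ by weak$^*$ compactness. Linearity of each $T_{F,\varepsilon}$ passes to the limit, $\|T\|\le\lambda$, and $\|T_{F,\varepsilon}z-z\|\le\varepsilon\|z\|\to 0$ gives $Tz=\kappa_Z(z)$ for $z\in Z$, which is $(5)$. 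The implication $(6)\Rightarrow(5)$ reaches the same target through the canonical contraction $J\colon (Z)_U\to Z^{**}$, $\langle J((z_i)_U),\phi\rangle=\lim_U\phi(z_i)$, which has norm $\le 1$ and restricts to $\kappa_Z$ on the diagonal, so $J\circ T$ witnesses $(5)$. For $(5)\Rightarrow(2)$ I would dualize: writing $\kappa_{Z^*}\colon Z^*\to Z^{***}$ for the canonical embedding, the operator $S:=T^*\circ\kappa_{Z^*}\colon Z^*\to X^*$ satisfies $\langle S\phi,x\rangle=\langle Tx,\phi\rangle$, hence $\|S\|\le\lambda$ and $(S\phi)|_Z=\phi$; thus $S$ is a norm-$\le\lambda$ linear extension operator for dual functionals. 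If $r\colon X^*\to Z^*$ is the restriction map then $rS=\mathrm{Id}_{Z^*}$, so $\mathrm{Id}_{X^*}-Sr$ is a projection onto $Z^\perp$ and $\|\mathrm{Id}_{X^*}-(\mathrm{Id}_{X^*}-Sr)\|=\|Sr\|\le\lambda$, which is $(2)$.

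Next, $(2)\Rightarrow(3)$ is a second dualization: with $Q=\mathrm{Id}_{X^*}-P$ a projection of norm $\le\lambda$ and $\ker Q=\mathrm{ran}\,P=Z^\perp$, the adjoint $Q^*$ is a projection on $X^{**}$ of the same norm whose range is $(\ker Q)^\perp=(Z^\perp)^\perp$, and the latter is exactly the natural copy of $Z^{**}$ in $X^{**}$; so $Q^*$ realizes $(3)$. To close the cycle I would prove $(3)\Rightarrow(1)$: composing the projection $\Pi\colon X^{**}\to Z^{**}$ with $\kappa_X$ gives a $(5)$-operator, and from a $(5)$-operator $T$ the Principle of Local Reflexivity recovers $(1)$, since for fixed $F,\varepsilon$ one applies local reflexivity to the finite-dimensional $G=T(F)\subset Z^{**}$ to get $u\colon G\to Z$ with $\|u\|\le 1+\varepsilon$ fixing $G\cap\kappa_Z(Z)$, whence $u\circ T|_F\colon F\to Z$ has norm $\le\lambda(1+\varepsilon)$ and is the identity on $Z\cap F$; as $\varepsilon$ is arbitrary this is $(1)$ (the formulations with exact norm $\lambda$ and approximate fixing, versus norm $\lambda+\varepsilon$ and exact fixing, coincide).

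It remains to attach $(6)$ and $(4)$. For $(1)\Rightarrow(6)$ I would realize the maps $T_{F,\varepsilon}$ inside an ultraproduct, sending $x$ to the class $(T_{F,\varepsilon}x)_U\in (Z)_U$; linearity and $\|\cdot\|\le\lambda$ are immediate and $\lim_U\|T_{F,\varepsilon}z-z\|=0$ shows the image of $z\in Z$ is its diagonal copy, giving $(6)$. For $(5)\Rightarrow(4)$, given a compact $K\colon Z\to Y$ I would set $\widehat K=\kappa_Y^{-1}\circ K^{**}\circ T\colon X\to Y$; compactness of $K$ forces $K^{**}(Z^{**})\subseteq\kappa_Y(Y)$ (Goldstine plus norm-compactness of $\overline{K(B_Z)}$), so $\widehat K$ is well defined, compact, of norm $\le\lambda\|K\|$, and extends $K$ because $T|_Z=\kappa_Z$. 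Finally $(4)\Rightarrow(1)$ is Kalton's theorem from \cite{Kal84} (the quantitative refinement is also recorded in \cite{GonMar15}), which I would invoke as a black box. The two places where real work is hidden are this last implication and the single appeal to local reflexivity in $(3)\Rightarrow(1)$; everything else is formal, and the main bookkeeping difficulty is keeping the three canonical embeddings $\kappa_X,\kappa_Z,\kappa_Y$ and the identifications $Z^*\cong X^*/Z^\perp$ and $Z^{**}\cong(Z^\perp)^\perp$ straight.
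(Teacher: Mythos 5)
Your proposal is correct, but the comparison here is lopsided in an unusual way: the paper never proves this proposition at all --- it states it as a compilation of known facts and points to \cite{Kal84}, \cite{Fak72} and \cite{GonMar15} for the equivalences. You supply an actual argument, and the individual arrows check out: the weak$^*$-compactness construction for $(1)\Rightarrow(5)$, the two dualizations $(5)\Rightarrow(2)\Rightarrow(3)$ (with the standard identifications $\mathrm{ran}(Q^*)=(\ker Q)^{\perp}=(Z^{\perp})^{\perp}\cong Z^{**}$), the ultraproduct realizations for $(1)\Rightarrow(6)\Rightarrow(5)$, and the factorization $\widehat{K}=\kappa_Y^{-1}\circ K^{**}\circ T$ for $(5)\Rightarrow(4)$, which correctly uses that compactness of $K$ forces $K^{**}(Z^{**})\subset \kappa_Y(Y)$. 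Two points in your write-up deserve emphasis because they carry real weight. First, the parenthetical claim that the ``norm $\lambda$ with approximate fixing'' and ``norm $\lambda(1+\varepsilon)$ with exact fixing'' formulations coincide is doing genuine work in $(3)\Rightarrow(1)$, and it is right in the direction you need it: rescaling the composition $u\circ T|_F$ by $(1+\varepsilon)^{-1}$ gives a map of norm at most $\lambda$ satisfying $\|T'f-f\|=\tfrac{\varepsilon}{1+\varepsilon}\|f\|<\varepsilon\|f\|$ on $Z\cap F$, which matches the paper's definition of $\lambda$-local complementation exactly. Second, invoking Kalton as a black box for $(4)\Rightarrow(1)$ is legitimate and mirrors what the paper itself does (its introduction quotes Kalton's equivalence of local complementation with the CEP); the only care needed is that the constant survives, which it does because $\lambda$-local complementation in the approximate-fixing sense is stable under knowing only $(\lambda+\delta)$-local complementation for every $\delta>0$. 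In short, where the paper outsources the entire proposition to the literature, you outsource a single implication and derive everything else by standard duality and compactness arguments; your version is the more self-contained of the two, at the cost of length.
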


A usual technique to find a linear operator that extends functions from a Banach space $X$ to its bidual, preserving some specific local property of these functions, involves extending functions from $X$ to an ultraproduct $(X)_U$ with a similar process as the one we used in Theorem \ref{analogoflocalcomplementation}, and then using the principle of local reflexivity to show that $X^{**}$ can be seen as a subspace of $(X)_U$ that contains $X$ (we refer to \cite{HajMic14} for a detailed study of these extensions). Thanks to statement (6) in the previous proposition, we can use this technique to construct linear extension operators from locally complemented spaces, which generalizes the bidual extensions. In particular,
we obtain a generalization of the Aron-Berner polynomial (and analytic)  extensions in \cite{AroBer78}, as well as the bidual extensions of $k$-times uniformly differentiable functions introduced in \cite{ChoHajLee13} and further studied in  \cite{HajMic14}.

For the next proposition, denote by $X^{\mathbb{R}}$ the set of real valued functions defined on the Banach space $X$. Then we have:
\begin{proposition}
Let $X$ be a Banach space, $\lambda\geq 1$. If a subspace $Z\subset X$ is $\lambda$-locally complemented, then there exists a linear extension map $E\colon Z^{\mathbb{R}}\rightarrow X^{\mathbb{R}}$ such that:

\begin{itemize}
    \item[(1)] If $f\in Z^*\subset Z^{\mathbb{R}}$, then $E(f)\in X^*$ and $\|E(f)\|\leq \lambda \|f\|$. In particular, there exists a linear extension operator $T\colon Z^*\rightarrow X^*$ with $\|T\|\leq \lambda$.
    \item[(2)] If $f\in \text{Lip}_0(Z)\subset Z^{\mathbb{R}}$, then $E(f)\in \text{Lip}_0(X)$ and $\|E(f)\|_{\text{Lip}}\leq \lambda \|f\|_{\text{Lip}}$. In particular, there exists a linear extension operator $T\colon \text{Lip}_0(Z)\rightarrow \text{Lip}_0(X)$ with $\|T\|\leq\lambda$.
    \item[(3)] If $f$ is uniformly continuous in $X$ with modulus of continuity $\omega$, then $E(f)$ is uniformly continuous with modulus of continuity less or equal than $\lambda\omega$. 
    \item[(4)] If $f$ is a continuous polynomial in $Z$ of degree $n$, then $E(f)$ is a continuous polynomial in $X$ of degree $n$, and $\|E(f)\|\leq \lambda\|f\|$.
    \item[(5)] If $f$ is $k$-times uniformly Fr\'echet differentiable in $Z$, for some $k\ge1$, then $E(f)$ is $k$-times uniformly Fr\'echet differentiable in $X$, and the derivative coincides at any point in $Z$.
\end{itemize}
\end{proposition}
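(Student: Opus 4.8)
The plan is to build $E$ as a composition of two linear operations: the canonical extension of a function on $Z$ to the ultrapower $(Z)_U$, followed by composition with the operator furnished by local complementation. Concretely, by statement (6) of Proposition \ref{equivalencesoflocalcompl} there is an ultrafilter $U$ on some index set $I$ and a linear operator $T\colon X\rightarrow (Z)_U$ with $\|T\|\leq\lambda$ that is the identity on $Z$ (identified with its diagonal copy in $(Z)_U$). I would fix a linear lifting of $T$ through the quotient $\ell_\infty(I,Z)\rightarrow (Z)_U$, producing linear coordinate maps $S_i\colon X\rightarrow Z$ such that $(S_i(x))_{i\in I}$ is a bounded family representing $T(x)$ for each $x$. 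For a function $f\colon Z\rightarrow\mathbb{R}$ one would like to set $E(f)(x)=\lim_U f(S_i(x))$; this is exactly $f_U(T(x))$, where $f_U$ denotes the canonical ultrapower extension of $f$, whenever the family $(f(S_i(x)))_i$ is bounded.

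To obtain a single linear map defined on all of $Z^{\mathbb{R}}$, I would fix, using a Hamel basis, a linear functional $\Phi\colon\mathbb{R}^I\rightarrow\mathbb{R}$ extending the ultralimit functional $(a_i)\mapsto\lim_U a_i$ defined on $\ell_\infty(I)$, and set $E(f)(x)=\Phi\big((f(S_i(x)))_{i\in I}\big)$. Since $f\mapsto (f(S_i(x)))_i$ is linear into $\mathbb{R}^I$ and $\Phi$ is linear, $E$ is linear; and on every function belonging to one of the classes (1)--(5) the relevant family is bounded, so $E(f)$ coincides with $f_U\circ T$ and the purely algebraic choice of $\Phi$ plays no further role. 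The extension property $E(f)|_Z=f$ follows because $T(z)=z$ for $z\in Z$ forces $(S_i(z))_i$ to be $U$-close to the constant family $z$, whence $f_U(z)=\lim_U f(S_i(z))=f(z)$ for the (continuous) $f$ in these classes.

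It remains to run through the five classes; in each case the property is inherited by the ultrapower extension $f_U$ on $(Z)_U$ with the same quantitative data, and is then pulled back to $X$ by composing with the bounded linear map $T$. For (1) and (2), $f_U$ is a bounded linear functional, resp. a Lipschitz function vanishing at $0$, on $(Z)_U$, with $\|f_U\|=\|f\|$, resp. $\|f_U\|_{\text{Lip}}=\|f\|_{\text{Lip}}$ and $f_U(0)=0$; since $T$ is linear, $\lambda$-bounded and sends $0$ to $0$, the bounds $\|E(f)\|\leq\lambda\|f\|$ and $\|E(f)\|_{\text{Lip}}\leq\lambda\|f\|_{\text{Lip}}$ follow from $E(f)=f_U\circ T$, so $E$ restricts to the linear extension operators $Z^*\rightarrow X^*$ and $\text{Lip}_0(Z)\rightarrow\text{Lip}_0(M)$ already exploited in Theorem \ref{analogoflocalcomplementation}. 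For (3), after replacing $\omega$ by its least concave majorant, $f_U$ has modulus $\omega$ and the estimate $\|Tx-Ty\|\leq\lambda\|x-y\|$ gives $E(f)$ the modulus $t\mapsto\omega(\lambda t)\leq\lambda\omega(t)$.

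The substantive part is (4) and (5), where I would rely on the fact that the ultrapower extension preserves polynomial degree and uniform $C^k$-smoothness, as developed in \cite{ChoHajLee13} and \cite{HajMic14} (and underlying the Aron--Berner extension of \cite{AroBer78}): $f_U$ is then a continuous polynomial of the same degree $n$, resp. a $k$-times uniformly Fr\'echet differentiable function, on $(Z)_U$. Composition with the bounded linear $T$ keeps the degree $n$ (a polynomial composed with a linear map is a polynomial of the same degree) and, by the chain rule, preserves uniform $C^k$-differentiability, with the norm and derivative estimates controlled by $\|T\|\leq\lambda$. That the derivatives of $E(f)$ and $f$ agree at a point $z\in Z$ is because $Tz=z$ and $Th=h$ for $h\in Z$, so $D(f_U\circ T)(z)[h]=Df_U(z)[h]=Df(z)[h]$. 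I expect the main obstacle to lie precisely here: verifying that the ultrapower extension transfers uniform differentiability of order $k$ with the correct moduli, and checking that this interacts cleanly with the merely algebraic definition of $E$ on all of $Z^{\mathbb{R}}$ -- the lower-regularity cases (1)--(3) being comparatively routine once $T$ from Proposition \ref{equivalencesoflocalcompl} is in hand.
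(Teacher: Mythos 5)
Your proposal is correct and is essentially the argument the paper intends: the paper gives no proof beyond the paragraph preceding the proposition, which points to statement (6) of Proposition \ref{equivalencesoflocalcompl} together with the ultraproduct extension technique of \cite{AroBer78}, \cite{ChoHajLee13}, \cite{HajMic14}, and your construction $E(f)=f_U\circ T$ (with the coordinate maps $S_i$ obtained by lifting $T$ through the quotient $\ell_\infty(I,Z)\rightarrow (Z)_U$, and the Hamel-basis functional $\Phi$ making $E$ linear on all of $Z^{\mathbb{R}}$) is exactly that technique written out in detail. One small adjustment is needed to get the literal statement: since the proposition asserts that $E$ is an extension map on all of $Z^{\mathbb{R}}$, you should choose the linear lifting $\sigma\colon (Z)_U\rightarrow\ell_\infty(I,Z)$ so that it restricts to the diagonal embedding on $Z$ (extend a Hamel basis of $Z$, viewed diagonally inside $(Z)_U$, to a Hamel basis of $(Z)_U$ and pick arbitrary preimages of the remaining vectors), which forces $S_i(z)=z$ for every $z\in Z$ and hence $E(f)|_Z=f$ for \emph{every} $f\in Z^{\mathbb{R}}$, rather than only for the continuous functions in classes (1)--(5), which is all your $U$-limit argument by itself guarantees.
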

Note also that conditions $(1)$ and $(2)$ in the previous proposition are actually equivalent to the fact that $Z$ is $\lambda$-locally complemented in $X$.

\printbibliography
\end{document}